\DeclareMathAlphabet{\pazocal}{OMS}{zplm}{m}{n}
\let\mathcal\pazocal
\DeclareMathAlphabet{\pazocal}{OMS}{zplm}{m}{n}
\let\mathcal\pazocal
\definecolor{myred}{rgb}{0.8,0,0}  
\definecolor{markierung}{rgb}{0.7,1,0.7} 
\def \E{\mathbb{E}}
\def \R{\mathbb{R}}               
\def \I{\mathbb{I}}
\def \L{N_l}
\def \Err{D }
\def \M{N_j}
\newcommand{\one}{\mathds{1}}
\definecolor{burntsienna}{rgb}{0.91, 0.45, 0.32}
\definecolor{darksalmon}{rgb}{0.91, 0.59, 0.48}
\definecolor{lightsalmonpink}{rgb}{1.0, 0.6, 0.6}
\definecolor{melon}{rgb}{0.99, 0.74, 0.71}
\definecolor{pink-orange}{rgb}{1.0, 0.6, 0.4}
\definecolor{salmon}{rgb}{1.0, 0.55, 0.41}
\definecolor{xxx}{rgb}{0.85,0.85,0.85}
\definecolor{xx}{rgb}{0.99,0.78,0.68}
\definecolor{myred}{rgb}{0.9,0,0}
\definecolor{mygreen}{rgb}{0.0,0.5,0}
\definecolor{myblue}{rgb}{0,0,1}
\definecolor{lightblue}{rgb}{0.9,0.95,1}
\definecolor{mylila}{rgb}{0.7,0,1}
\definecolor{mygreen}{rgb}{0.0,0.5,0}
\newcommand{\condvar}{Q}
\newcommand{\D}{\displaystyle}
\newcommand{\condmean}{M}  
\newcommand{\Var}{\mathrm{Var}}
\newcommand{\argmin}{\mathop{\mathrm{argmin}}}
\newcommand{\card}{\mathop{\mathrm{card}}}
\newcommand{\supp}{\mathop{\mathrm{supp}}}
\newcommand{\Ec}{\mathcal{E}}
\newcommand{\Eq}{\widehat{\mathcal{E}}}
\newcommand{\B}{\mathcal{B}}
\newcommand{\m}{m}
\newcommand{\q}{q}
\newcommand{\z}{z}
\date{\today}
\begin{document}

	\title{Stochastic Optimal Control of an Epidemic Under Partial Information}
	
	\titlerunning{Stochastic Optimal Control of an Epidemic Under Partial Information}        

	\author{Ibrahim Mbouandi Njiasse \and Florent Ouabo Kamkumo   \and Ralf Wunderlich}
	\authorrunning{I.~Mbouandi Njiasse,  F.~Ouabo Kamkumo,  R.~Wunderlich}
	\institute{
		Brandenburg University of Technology Cottbus-Senftenberg, Institute of Mathematics, P.O. Box 101344, 03013 Cottbus, Germany;  \\
		\email{\texttt{Ibrahim.MbouandiNjiasse/Florent.OuaboKamkumo/Ralf.Wunderlich@b-tu.de} }}  
	
	\date{Version of  \today}

	\maketitle

	\begin{abstract}		 
			In this paper, we address a social planner's  optimal control problem  for a partially observable stochastic  epidemic model. The control measures include social distancing, testing, and vaccination. Using a diffusion approximation for the state dynamics of the epidemic, we apply  filtering arguments to transform the partially observable stochastic optimal control problem into an optimal control problem with complete information. This transformed problem is treated as a Markov decision process. The associated Bellman equation is solved numerically using optimal quantization methods for approximating the expectations involved to mitigate the curse of dimensionality.  We implement two approaches,  the first involves  state discretization coupled with linear interpolation of the value function at non-grid points. The second utilizes a parametrization of the value function with educated ansatz functions. Extensive numerical experiments are presented to demonstrate the efficacy of both methods.   
	\end{abstract}

	\keywords{Stochastic optimal control problem; Stochastic epidemic model; Dynamic programming; Markov decision process; Backward recursion; Optimal quantization}
	\subclass{93E20 \and 92D30 \and 90C40 \and 92-10  \and 90-80 }
	

	\newpage
	\setcounter{tocdepth}{3}
	\tableofcontents 
	\newpage
	\section{Introduction}
	In order to respond  to an epidemic  efficiently, policy makers have to assess the potential efficacy of various strategies for containing, mitigating, or even eradicating the disease. Such an assessment involves first the modeling of  plausible scenarios for the future course of the epidemic, second the  identification of  preventive  measures and pharmaceutical measures when  available, that  can  be implemented to limit the epidemic spread,  and third the evaluation of  the costs and   impacts  of intervention.  The optimal control of  an epidemic  has been subject to a  massive interest in the recent years, largely due to the  Covid-19 pandemic.  This procedure is of particular  relevance  because of the conflicting interests of the governments  between a safety-oriented approach (limiting the death  due to the disease) and  an economy-focused approach (avoid high economy burden).     The formulation of an optimal control problem for an  epidemic requires the description of the epidemic dynamics. 	Mathematical modeling   plays an important role in this process  by allowing to construct specific models that capture essential  features of the disease of interest.  The compartmental model, first proposed by  \citet{kermack1927contribution}, is the most widely used approach in epidemiology. These models divide the entire  population into  distinct compartments according to the state of health with respect to the disease under study. In that paper,  the resulting dynamics of the compartmental model is described by a system of ordinary differential equations (ODEs).  ODE models are a popular choice among authors  and deterministic optimal control for epidemic are built on top of such ODE models. Furthermore, extensions to  stochastic dynamics are possible  and more realistic, since they allow for the incorporation of  randomness that cannot be predicted during an epidemic.
	
	\paragraph{Literature review on optimal control of epidemics} When facing an epidemic outbreak,  measures that aim at least at containing the spread of the disease have to be  enforced,  since eradication or suppression is both   costly and  unrealistic. Quite often, cure and vaccine are not available, and authorities need to rely on non-pharmaceutical intervention (NPI) such as social distancing, home quarantine, lockdown of non-essential businesses and other institutions. Additionally,  preventive measures such as use of face masks may also be employed. This was pointed out for the recent Covid-19 pandemic by  \citet{kantner2020beyond},   \citet{hellewell2020feasibility}, \citet{ferguson2020report}  who conducted analyses to assess the efficacy  of NPI.  The most widely considered  NPI is social distancing, which allows for a reduction in social interactions and, consequently, the prevention of contact between susceptible and infectious individuals. The optimal level of  social  distancing  and  the timing  have been investigated by several  authors including 
	\citet{miclo2022optimal},  \citet{kruse2020optimal},  \citet{behncke2000optimal},  \citet{nowzari2016analysis}, \citet{federico2021taming}, \citet{alvarez2020simple},   \citet{calvia2024simple} and \citet{charpentier2020covid}. More specifically, \citet{kruse2020optimal}  introduced a parameter that is  controlled  by the planer allowing to adjust the lockdown level. Given that  they considered linear cost  in the number of infected, the resulting optimal control strategy  is of the  bang-bang type.  \citet{miclo2022optimal}  provided an analytic policy for their optimal control problem  with ICU constraints.  \citet{alvarez2020simple}  numerically characterized the optimal lockdown policy for a SIR model.  \citet{charpentier2020covid} considered a control problem with asymptomatic  compartments and provided numerical solution to  the optimal policy. Moreover,  \citet{acemoglu2021optimal} proposed  a targeted lockdown on a more vulnerable  age-grouped model, demonstrating  that  targeted lockdowns  policies outperform indiscriminated policies.  
	
	The main  objective of these NPI is to flatten the infection curve  in order to avoid overwhelming of the healthcare system's capacity. In this regards, \citet{charpentier2020covid}, \citet{miclo2022optimal},  \citet{avram2022optimal}  consider optimal control problems  with ICU constraints. \citet{miclo2022optimal}  observed  that flattening the curve strategy is suboptimal  and suggest the so called  ''filling the box'' strategy  which consists of  four phases. First, the spread of the disease is let freely and then as the ICU limit is approached, a strong lockdown  is applied.  After the lockdown, regulations are gradually lifted maintaining the rate of infection  constants  and in the last phase all regulations are lifted.   \citet{kantner2020beyond}, and \citet{ferguson2020report}  recommended exploring  NPI strategies that goes beyond flattening the curve.  
	
	Beside NPI, vaccination has been considered  as a highly  effective strategy for eradicating, or at the very least containing, an epidemic. The smallpox vaccine has  been successful   in eradication  and the vaccination has reduced death due to measles by $94 \% $.  This explain  why many studies on epidemic control have  focused on  vaccination, see  \citet{hethcote1973optimal}, see also, \citet{ledzewicz2011optimal},  \citet{hu2013optimal}, \citet{laguzet2015global}. In some authors' models, vaccinated  people  are grouped  in  an additional  compartment  (\citet{ishikawa2012stochastic}, \citet{garriga2022optimal}).  Furthermore, \citet{federico2024optimal} examined  an optimal  vaccination  problem  with incomplete immunization power of  the vaccine, analyzing the problem  with the dynamic programming approach.   \citet{hansen2011optimal} investigate the use of isolation and vaccination  in a model  incorporating  limited resources. In some models, the arrival time of the vaccine is random, as observed in \citet{garriga2022optimal} and  \citet{federico2024optimal}. 
	
	When it comes to solution approaches to  epidemic control problems, the Pontryagin maximum principle is widely employed. This requires convexity  conditions on the objective function. Non-convex (in the state dynamics  or in the objective  function) optimization problems   are notoriously difficult to study  with the maximum principle approach.  These convexity conditions restrict the class of  functions that can  be included in the cost functional. However,  \citet{calvia2024simple} noted that the dynamic programming  approach can be profitably applied to the optimal control problems for epidemics with less restrictive conditions on the objective functional. Therefore,  they  analyzed a non-convex epidemic control problem  using  the dynamic programming approach. They  proved a continuity property of the value function  and showed that the value function  is  a solution  of the HJB equation  in the viscosity sense. Similarly,   \citet{federico2021taming,federico2024optimal}   used the dynamic programming technique  and  the viscosity solution concept.

	\paragraph{Problem statement}  In this study, we consider a population which is divided into classes according to the state of health concerning  a given disease. The classes are susceptible $S$, non-detected infected $I^-$, detected infected $I^+$,  non-detected recovered $R^-$, detected recovered $R^+$, and  hospitalized $H$. This model is denoted  by $SI^{\pm}R^{\pm}H$ and transitions between compartments are depicted in Figure  \ref{figcompartmt}. This flowchart is similar to that of   \citet{charpentier2020covid} which accounts for asymptomatic  compartments with unobservable compartment sizes. Such unobserved sizes could also result   from unreported cases of infection during   an epidemic.  \citet{al2020asymptomatic}  and  \citet{day2020covid}  highlight the potentially   high rate of asymptomatic individuals for the Covid-19. A more detailed description of the the model in Figure \ref{figcompartmt} is provided in Section \ref{sect2}. The dynamics of our model are described by a  stochastic process using  the approximation of a continuous time Markov chain (CTMC) by  a diffusion process as presented in     \citet{britton2019stochastic} and in our papers \citet{sem,pif}. Moreover, we assume a situation where the vaccine is already available  and thus the social planner could select three  control measures, namely social distancing, testing to  detect more infectious individuals   and vaccination.
	
	\begin{figure}[h]
		\begin{center}
			\includegraphics[width=5in]{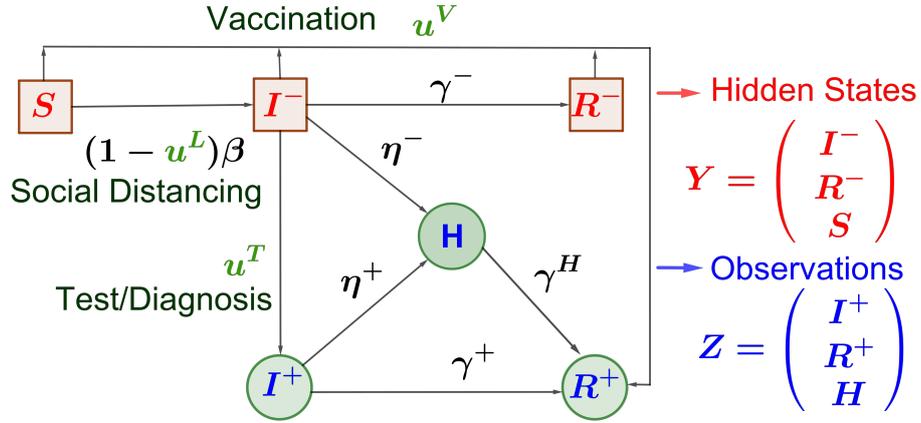}		
		\end{center}
		\caption{Flowchart of the $SI^{\pm}R^{\pm}H$ compartmental model with control variables $u^L,u^T $ and $u^V$. Hidden compartments are in red and observable compartments in blue. }
		\label{figcompartmt}
	\end{figure}
	
	We then formulate the social planner's control problem by constructing realistic costs that account for both the economic and sanitary impacts of intervention measures. These costs include fixed costs that the government must bear as long as a specific control measure is in place. Additionally, the primary component of each cost is linear, and depends on the number of people affected by the particular control measure. The final component is a quadratic penalty applied when a certain threshold is exceeded, making it less likely for strategies that lead to exceeding  this threshold to be  selected.
	The resulting problem is  a stochastic optimal control problem (SOCP) with partially  observed  state process $X_n=(I^-_n, R^-_n, I^+_n , R^+_n, H_n)^\top, n=0, \ldots, N_t$,  where $N_t$ is the time index of the time horizon,  that is also called SOCP under partial information. We employ   the innovations approach of stochastic filtering  to transform the initial SOCP  with  partially observed state process into a corresponding  SOCP  with fully observed  state process by  replacing  the hidden state components  $Y_n=(I^-_n,R^-_n)^\top$ by its extended Kalman filter. The latter is characterized by the approximations of  the  conditional  mean   $\condmean _n$  and conditional covariance   $\condvar_n$. A more detailed examination of these concepts can be found in Section \ref{sectfil}. 
	We then consider the problem as a Markov decision process (MDP)  and derive the associated Bellman equation. 
	
	\paragraph{Our  contribution} The originality of this model lies in the fact that it allows to describe diseases with asymptomatic individuals, even though the number of asymptomatic individuals is not observable, and to address the problem of "flattening the curve" with a modeling of hospital facilities. Decision makers need to ensure that the maximum capacity of hospital equipment is not reached to avoid the ethical triage problem.   In addition to formulating the SOCP of an epidemic with a special feature, namely, the partial information aspect, we provide numerical solutions to the SOCP, which suffers from the curse of dimensionality due to an 8-dimensional state process and a 3-dimensional control. To address this challenge, the Bellman equation is solved numerically through state space discretization. The conditional expectation in the Bellman equation is then approximated by quantization techniques, which involve replacing a continuous random variable with an appropriate  discrete random variable. Two implementations of the backward recursion algorithm are performed. The first uses state space discretization, quantization and linear interpolation of the value function, while the second combines state space discretization, quantization and least squares parametrization of the value function with well-chosen ansatz basis functions.	
	Both implementations yield similar results, providing the optimal decision rule and the associated optimal cost or value function. Simulations of the epidemic path, using the derived optimal control strategies, demonstrate their effectiveness in containing the spread of the epidemic. 
	
	\paragraph{Paper organization}   Section \ref{sect1} details the compartmental model, control measures, and the stochastic dynamics of the compartment sizes. Section \ref{sect2} focuses on the formulation of the SOCP under partial information using the filtering argument. In Section \ref{sect3}, we present numerical methods based on the backward recursion algorithm and optimal quantization. Section \ref{sect4} features extensive numerical experiments performed using both approaches. Finally, appendices  compile some intermediate numerical experiments that were omitted from the main text.

	\section{Model description}\label{sect1}
	
	\subsection{Compartmental model}
	
	Compartmental models are the most powerful tools for monitoring and predicting the evolution of an epidemic.
	Each epidemic must be described by a specific  compartmental model that  captures all the important features of the epidemic. Therefore, 
	we consider the $SI^{\pm}R^{\pm}H$  compartmental epidemic model      in  Figure \ref{figcompartmt},	
	which is an extension of the classical and well-known $SIR$ model.  In this modified model, the total population $N$, that we assume to  be constant throughout the study period, is divided into compartments according to the epidemic status of individuals within the population. Therefore, the susceptible compartment, denoted by $S$, contains all those who could contract the disease and become infectious. To capture the peculiarities of diseases with asymptomatic individuals, the infectious compartment is split into  undetected  infectious $I^-$ and detected infectious  $I^+$. Similarly, the recovered compartment is split into  undetected recovered $R^-$ and  detected recovered  $R^+$. We also include the hospital group $H$, which includes hospitalized individuals and those in special hospital units, such as intensive care units (ICU). This model was inspired by the paper of \citet{charpentier2020covid} and we refer  the reader  to \citet{sem} for a more detailed description of such a model.

	Several transitions with corresponding intensities are considered. The transition rates are the reciprocal of the average sojourn time in a given compartment and allow to get the average number of transitions per unit of time.  We assume that detected infectious persons $I^+$ are well isolated and cannot infect susceptible persons, so that new infections are only due to contacts between a susceptible and a non-detected infectious person. The latter transition occurs with intensity $\beta > 0$. This means that on average $\beta S {I^-}/{N} $ number of people become infectious per unit of time when there are no control measures. Moreover, a non-detected infectious person can be detected with detection rate $\alpha> 0$. If the condition of an infected person becomes severe, he/she  will be hospitalized with rate $\eta^-> 0$ if he is not detected, or at a rate $\eta^+ > 0$ if he is detected.	
	Alternatively, infected individuals recover with intensities  $\gamma^-> 0$ if they are not detected, $\gamma^+> 0$ if they are detected, and $\gamma^H> 0$ if they recover in a hospital. 	Finally, in this model we assume that the vaccine is available. Through  vaccination, susceptible, non-detected infectious or non-detected recovered individuals can be immunized against the disease and move to the class of detected recovered individuals.  In addition, this epidemic flowchart has some non-observable or hidden states $S$, $I^-$, and $R^-$, namely, because we cannot distinguish between  asymptomatic or unreported individuals and susceptible individuals. Hence, we cannot  collect data on the number of people in these compartments at any given time. The remaining compartments will be observable  since  inflows and the outflows of these compartments can mostly  be recorded. In this model, we have made  the assumption  that those who recover from the disease or are vaccinated against it acquire lifelong immunity. At least we assume that the immunity lasts until the time horizon of the study. For this reason, the compartment $R^+$ is an absorbing state. Consequently, this model can describe a disease with lifelong immunity or the early stage of a disease that provides immunity after recovery that lasts longer than the time horizon of the study.

	\subsection{Control measures}
	The management of an epidemic can be summarized as  a  problem of cost-optimal containment of the  epidemic through an appropriate mix of measures. Decision-makers  need to know on which levers they can act to mitigate the spread of the disease within the population. For this epidemic model, we are investigating  some potential measures   that  will be divided into three types: social distancing measures or lock-down, detection  or testing measures, and vaccination strategy.

	\subsubsection{ Social distancing or lock-down} Historically, quarantine has been employed to isolate infected individuals and prevent them from contaminating those who  have  not yet been infected. However, this type  of measure is more effective in the early phase of an epidemic and becomes more difficult to implement on a larger scale. Furthermore, the economic cost of  such a policy may be too high. Instead, for a large-scale epidemic,  we can consider social distancing measures   with different levels of restrictions. The level of social distancing will be quantified   with the control variable $u^L$ which  ranges from $0$ to $1$, with $0$    corresponding to the absence of social distancing,  while the value $1$  represents  a strict  and perfect lock-down (which is not achievable in reality). These control variables involve other NPI measures such as  wearing masks in public places and mobility restrictions. The implementation of the control variable $u^L$ will result in a reduction of the transmission rate from $\beta$ to $(1-u^L) \beta$.		
	\\
	
	\subsubsection{   Detection or testing}  Accurate determination of the actual number of infectious individuals is of critical relevance in the assessment of the current state of the epidemic and its future evolution. A good testing strategy allows for the identification, tracking, and isolation of infected individuals if necessary. The testing control variable may range from a minimum testing rate $u^T_{min} \geq 0 $ when almost no specific testing policy is applied, to the maximum possible testing strategy $u^T_{max}$ that policymakers can afford. The lower limit  $u^T_{min}$ may eventually be zero if no detection measures are applied. In the model, it is assumed that the expected  number of detections per unit of time is proportional to the testing rate $u^T$. In other words, the greater the number of tests performed, the greater the proportion of non-detected infected individuals who are detected.

	\subsubsection{   Vaccination} The most effective method for  eradicating  an epidemic is through a large-scale  vaccination campaign. In the fight for  total eradication of the disease, the government has as one of its main objectives, to make a vaccine available to the population for prevention. When designing a vaccination strategy, several questions  arise such as  the quantity of vaccine to be produced and also  the vaccination coverage to be established. Unfortunately, planning and executing these strategies have  financial costs. The decision maker must consider the problem of controlling the rate of individuals to be vaccinated at a given time. Before proceeding, we introduce $u^V \geq 0$, a control variable that controls the average intensity of individuals who receive the vaccine per unit of time. In  addition, the upper limit  of the vaccination intensity that the policymakers can afford to have in place  is denoted by  $u^V_{max}$.

	\subsection{Stochastic dynamics of the compartmental model}
	To describe the dynamic 	of the above  compartmental model in Figure \ref{figcompartmt}, we adopt the continuous-time Markov chain  $(CTMC)$ approach  as described in \citet{anderson2011continuous}; \citet{britton2019stochastic}; \citet{guy2015approximation} and \citet{sem} . This approach was initially developed to model chemical reaction networks. The dynamics of compartment sizes are  described by  counting processes which are first replaced by  a Poisson process with state dependent intensities. Then, considering the relative subpopulation sizes for $N$ large, the Markov jump process is approximated by a diffusion process, for which the error between both infinitesimal generators is of order $o({1}/{N})$, see \citet{guy2015approximation}.  By applying this method, we derive the following diffusion approximation for the $SI^{\pm}R^{\pm}H$ model. It should be noted that for the sake of simplified  notation, we  omitted the time argument of each component of the state variable in the drift and diffusion coefficients. These coefficients may depend explicitly on the time variable  through the infections rate  that can  be considered time dependent  $\beta= \beta(t)$. 	
	If we denote the state variable  $X=(Y^\top,Z^\top)^\top$, with $Y=(  I^{-},R^{-},S)^\top$ and $Z=(I^{+},R^{+},H)^\top$,
	which components represent  absolute  subpopulation size  in each  compartment, then it  satisfies the following  stochastic differential equation (SDE), where the component $S(t)=Y_3(t)$ is omitted and  can be recovered by normalization $Y_3=N-(Y_1+Y_2+Z_1+Z_2+Z_3)$ because we assume that the population size remains constant.

	{\footnotesize 
		\begin{eqnarray*}\label{DA}
			d\left(\begin{array}{c}
				Y_{1}\\
				Y_{2}\\	[0.4ex]
				Z_{1}\\
				Z_{2}\\
				Z_{3}\\
			\end{array} \right) \hspace{-0.2cm}
			&=&\hspace{-0.2cm}\left(\begin{array}{c}		
				\frac{ (1-u^L)\beta Y_{1}}{N} (N-(Y_1+Y_2+Z_1+Z_2+Z_3))-(\gamma^{-} + \eta^-+u^T+u^V)Y_{1} \\
				\gamma^{-}Y_{1}-u^V Y_{2}\\ [0.4ex]
				u^T Y_{1}-(\gamma^{+}+\eta_{}^{+})Z_{1}\\
				\gamma^{+}Z_{1}+  \gamma^{H}Z_3 + u^V( N-(Z_1+Z_2+Z_3))\\
				\eta^{-}Y_{1}+\eta_{}^{+}Z_{1}- \gamma^{H}Z_{3}							
			\end{array} \right)dt + \left(\begin{array}{cc}
				\overline{\sigma}& \overline{g}\\
				0&\overline{\ell}
			\end{array}\right) d W					
		\end{eqnarray*}
	}
	
	such that when the state is split into hidden state $Y$  and observation $Z$, the diffusion approximation can be expressed as follows:
	\begin{align}
		dY(t)&=\overline{f}(t,Y(t),Z(t),u(t))dt+ \overline{\sigma} (t,Y(t),Z(t),u(t))dW^{1}(t) +\overline{g}(t,Y(t),Z(t),u(t)) dW^{2}(t),& 
		\\[0.5ex]
		dZ(t)&= (\overline{h}_0(t,Z(t),u(t))+\overline{h_1}(t,Z(t),u(t))Y(t))dt + \overline{\ell}(t,Y(t),Z(t),u(t))dW^{2}(t).& 
	\end{align}	
	The drift matrices are defined as:\\
	~\\
	\begin{tabular}{l}
		$ \overline{f}(t,Y,Z,u)= \left(\begin{array}{c}		
			\frac{ (1-u^L)\beta Y_{1}}{N} (N-(Y_1+Y_2+Z_1+Z_2+Z_3))-(\gamma^{-} + \eta^-+u^T+u^V)Y_{1} \\
			\gamma^{-}Y_{1}-u^V Y_{2}\\ 				
		\end{array} \right),$\\[4ex]
		$\overline{h}_0(t,Z,u)~~= \left(\begin{array}{c}		
			-(\gamma^{+}+\eta_{}^{+})Z_{1}\\
			\gamma^{+}Z_{1}+  \gamma^{H}Z_3 + u^V( N-(Z_1+Z_2+Z_3))\\
			\eta_{}^{+}Z_{1}- \gamma^{H}Z_{3}		
		\end{array} \right),$\\ [4ex]
		$\overline{h_1}(t,Z,u)~~= \left(\begin{array}{cc}		
			u^T &0\\
			0&0\\
			\eta^{-}&0							
		\end{array} \right). $
	\end{tabular}

	The  matrices in the diffusion coefficient are defined as \\[.1ex]
	
	\begin{tabular}{lcl}		
		$	\overline{\sigma} (t,Y,Z,u)$&=& $\left( \begin{array}{cc}
			\sqrt{\frac{(1-u^L)\beta Y_{1}}{N}(N-(Y_1+Y_2+Z_1+Z_2+Z_3))} & -\sqrt{\gamma^{-}Y_{1}} \\
			0& \sqrt{\gamma^{-}Y_{1}}\\
		\end{array} \right),$ \\[4ex]					
		$\overline{g}(t,Y,Z,u)$&$=$&				
		$ \left( \begin{array}{*{8}{c}}
			-\sqrt{u^{T}Y_1}&0&0&-\sqrt{u^V Y_1}&0&-\sqrt{\eta^-Y_1}&0&0 \\
			0&0&0&0&- \sqrt{ u^V Y_2}&0&0&0
		\end{array} \right),$				
	\end{tabular}

	~\\
	
	\begin{tabular}{l}				
		
		\hspace{-.4cm}$	\overline{\ell} (t,Y,Z,u)$ \hspace{.8cm}$=$\\	[2ex]									
		\hspace{-.6cm}\footnotesize{$\left( \begin{array}{c@{\hspace*{0.2em}}c@{\hspace*{0.1em}}c@{\hspace*{0.2em}}c@{\hspace*{0.2em}}c@{\hspace*{0.1em}}c@{\hspace*{0.1em}}c@{\hspace*{0.2em}}c@{\hspace*{0.2em}}}
				\sqrt{u^{T}Y_1}&-\sqrt{\gamma^{+}Z_{1}}&0&0&0&0&-\sqrt{\eta^+ Z_1}&0\\						
				0&\sqrt{\gamma^+ Z_1}&\sqrt{u^V(N-(Y_1+Y_2+Z_1+Z_2+Z_3))}&\sqrt{u^V Y_1}&\sqrt{u^V Y_2}&0&0&\sqrt{\gamma^H Z_{3}}\\
				0&0&0&0&0&\sqrt{\eta^- Y_1}&\sqrt{\eta^{+}Z_{1}}&-\sqrt{\gamma^H Z_{3}}	\end{array} \right),$ }
	\end{tabular}
	
	~\\
	and\\

	$dW=\left(	dW_{1}, \ldots,	dW_{10}\right)^\top ,$  for each transition, an independent Brownian motion is associated.   	
	
	\vspace*{0.25cm}

	In the above SDE,  we  have split the state vector  into non-observable components $Y$   and observable components $Z$. We should also note that the independent components of the  $10$-dimensional Brownian motion  which depicts the noise of transitions between compartments are split into  Brownian motion that appear in the observation $Z$, $dW^1=\left(\begin{array}{*{2}{c@{\hspace*{0.35em}}}}
		dW_{1},&
		dW_{2}			\end{array} \right)^\top $  	   and  those which  are only involved in the hidden state   $dW^2=\left(dW_{3}, \ldots,	dW_{10}
	\right)^\top $. 
	
	Functions $\overline{f},~ \overline{\sigma},~\overline{g}$ and $ \overline{\ell}$  are clearly nonlinear in the hidden state $Y$.
	This nonlinearity  prevents  the use of the standard Kalman filter approach when it comes to filtering problem.  
	Since we aim to  solve the optimal control problem which is numerically untractable  in continuous-time for high-dimensional state,  we perform a time-discretization to mitigate  the curse of dimensionality. Moreover,  real-world  data for  an epidemic  are also   collected  in discrete time. Therefore, the Euler-Maruyama scheme, with time discretization $t_n= n \Delta t$, $n=0,\ldots,N_t$, is employed to derive the discrete dynamics
	\begin{align} \label{DDA}
		Y_{n+1}&=Y_n+ f(n,Y_n,Z_n,u_n)+ \sigma (n,Y_n,Z_n,u_n)\B ^{1}_{n+1} +g(n,Y_n,Z_n,u_n) \B^{2}_{n+1},& 
		\\[0.5ex]
		Z_{n+1}&= Z_n  + h_0(n,Z_n,u_n) + h_1(n,Z_n,u_n) Y_n+  \ell(n,Y_n,Z_n,u_n)\B^{2}_{n+1},& 
	\end{align}			
	\noindent where  $(\B^1_n)$ and $(\B^1_n)$   are independent sequences of i.i.d. $ \mathcal{N}(0,\I)$ random vectors. Note that $\I$ is the identity matrix with the corresponding dimension. In this discrete dynamics, terms with $\Delta t$  are incorporated into functions $ f, ~ \sigma,~ g ,~ \ell,~ h_0, \text{and}~ h_1 $ as we have for instance,  $f(n,\cdots)=\overline{f}(t_n,\cdots) \Delta t$, $\sigma(n,\cdots)=\overline{\sigma}(t_n
	,\cdots)\sqrt{\Delta t}$ and similarly for the other coefficients. The time discretization is subject to discretization errors that can destroy the normalization. The loss of this property can lead to negative values for the components of $Y$ and $Z$, or to values greater than $N$. To deal with this issue, it is often necessary for the numerical implementation to set minimum and maximum values for the compartment sizes.

	\subsection{Partial information and filtering} \label{sectfil}
	
	A  particular  feature  of our problem  is that the  state process $X$		is  not  fully  observable. We will  cope with  this  issue  using  the  projection to the observable filtration.	
	Let us  consider  our state process $X=(Y^\top_n,Z^\top_n)^\top, ~n=0,\ldots,N_t$,  described by the  discrete dynamic \eqref{DDA} mentioned above.  The filtering problem  consists in sequentially estimating  the hidden state $Y_n$  given the observation process $(Z_j),~j=0,\ldots,n$ up to time $n$.  This filtering problem is  non-standard, first, because the signal drift   functions  $ f $  is non-linear in the  hidden  state $Y$ and second, because  the diffusion coefficients $\sigma,~ g ,~ \ell$ depend on the hidden signal  $Y$.
	To solve  this non-linear filtering problem, we use the extended Kalman filter approach. This allows us to derive an approximation to  the optimal filter which is characterized by the conditional mean $\condmean_n : =\mathbb{E}\left( Y_n\vert \mathcal{F}^Z_n \right)$ and the  associated estimation error that is the conditional covariance matrix \[\condvar_n := \Var(Y_n|\mathcal{F}^Z_n) =\mathbb{E}[(Y_n-\condmean_n)(Y_n-\condmean_n)^\top \vert \mathcal{F}^Z_n ].\] 
	
	Using  the Kalman filter for conditional Gaussian sequences, see   \citet[Theorem 13.4]{liptser2013statistics}, and the extended Kalman filter procedure, the dynamics of  $\condmean$ and $\condvar$   are   given by the following proposition.
	
	\begin{proposition}
		The extended Kalman filter approximation of  the    conditional mean 	$\condmean_{n} $ and the conditional covariance  $ \condvar_{n}$  of the conditional distribution of $Y_n$ given  $\mathcal{F}^Z_n$  defined by equations \eqref{DDA}
		solve the following recursions  driven by the observations
		
		\begin{eqnarray}
			\condmean_{n+1}\hspace*{-0.05cm}&=& \hspace*{-0.05cm}  \condmean_n+f(n, \condmean_{n}, Z_n ) ~+ ~\left[ g\ell^\top+f_1 \condvar_{n} h_1^\top \right]\left[\ell\ell^\top + h_1 \condvar_{n} h_1^\top\right]^{+} \nonumber  \big[ Z_{n+1}- \big( h_0+h_1 \condmean_{n} \big) \big] \\
			{ \condvar}_{n+1}\hspace*{-0.05cm}&=& \hspace*{-0.05cm} -\left[ g\ell^\top+
			f_1 \condvar_{n} h_1^\top \right]\nonumber \left[\ell\ell^\top + h_1 \condvar_{n} h_1^\top\right]^{+}  \left[ g\ell^\top+ f_1 \condvar_{n} h_1^\top  \right]^\top 
			+f_1 \condvar_{n}f_1^\top+ \sigma\sigma^\top \label{eqvar}
		\end{eqnarray}			
		where the function $f_1=\mathbb{I}+ \dfrac{\partial f}{\partial y}$  with $\mathbb{I}$ the identity matrix of the dimension of $Y_n$, and functions $\sigma,~ g,~\ell$ have as argument $(n,\condmean_n,Z_n)$.
		In addition, the initial information $\sigma$-algebra  $\mathcal{F} ^ I_0$ is such that the conditional distribution of $Y_0$ given $\mathcal{F} ^ I_0$  is $\mathcal{N}(\condmean_0,\condvar_0)$.	
		Further, $[A]^+$ denotes the pseudoinverse of the matrix $A$.
		
	\end{proposition}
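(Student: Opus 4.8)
The plan is to read the two recursions as the exact conditionally Gaussian Kalman filter of \citet[Theorem~13.4]{liptser2013statistics} applied to a linearized, coefficient-frozen surrogate of the dynamics \eqref{DDA}; this is precisely what the extended Kalman filter prescribes. Thus I would proceed in three steps: first, linearize the nonlinear signal drift $f$ about the running conditional mean $\condmean_n$; second, freeze the state-dependent noise coefficients $\sigma,g,\ell$ at the current estimate $(n,\condmean_n,Z_n)$, turning \eqref{DDA} into a conditionally linear-Gaussian system; and third, match the coefficient blocks of this surrogate against the general filter of Theorem~13.4 and read off the gain, the innovation, and the Riccati update.

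For the linearization, since $Y_{n+1}=Y_n+f(n,Y_n,Z_n)+(\text{noise})$ with $f$ nonlinear in $Y$, I would Taylor-expand $f(n,Y_n,Z_n)\approx f(n,\condmean_n,Z_n)+\frac{\partial f}{\partial y}(n,\condmean_n,Z_n)\,(Y_n-\condmean_n)$. Collecting the terms that are linear in $Y_n$ produces an affine transition whose effective matrix is $\I+\frac{\partial f}{\partial y}=f_1$, which is exactly how $f_1$ enters the statement. Taking the $\mathcal{F}^Z_n$-conditional expectation of the linearized signal equation and using $\E[\,Y_n-\condmean_n\mid\mathcal{F}^Z_n\,]=0$ then gives the one-step predicted mean $\condmean_n+f(n,\condmean_n,Z_n)$, the leading term of the $\condmean_{n+1}$-recursion.

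For the matching step, in the notation of Theorem~13.4 the surrogate has signal transition matrix $f_1$ and observation matrix $h_1$. The essential bookkeeping concerns the driving noise: the signal is driven by $\sigma\,\B^1_{n+1}$ and $g\,\B^2_{n+1}$, while the observation is driven by $\ell\,\B^2_{n+1}$, so the two share the increment $\B^2$. Consequently the observation-noise covariance is $\ell\ell^\top$, the predicted signal/observation cross-covariance is $g\ell^\top$, and the predicted innovation covariance is $\ell\ell^\top+h_1\condvar_n h_1^\top$. Substituting the cross term $g\ell^\top+f_1\condvar_n h_1^\top$ and this innovation covariance into the gain and covariance formulas of the theorem yields the Kalman gain $[\,g\ell^\top+f_1\condvar_n h_1^\top\,][\,\ell\ell^\top+h_1\condvar_n h_1^\top\,]^{+}$, the correction driven by the innovation $Z_{n+1}-(h_0+h_1\condmean_n)$, and the stated covariance recursion. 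The pseudoinverse $[\,\cdot\,]^{+}$, rather than an ordinary inverse, is forced because $\ell$ is a non-square, generally rank-deficient matrix, so the innovation covariance may be singular.

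I expect the main obstacle to lie in this noise bookkeeping: correctly determining that only $\B^2$ is shared between $Y$ and $Z$, assembling the cross-covariance $g\ell^\top$, and then tracking how the process-noise contributions $\sigma\sigma^\top$ and $gg^\top$ combine with the subtracted gain term in the Riccati recursion so as to reproduce exactly the expression in \eqref{eqvar}. A secondary, more routine point is the initialization and the inductive structure: the hypothesis that the $\mathcal{F}^I_0$-conditional law of $Y_0$ is $\mathcal{N}(\condmean_0,\condvar_0)$ seeds the recursion, and the conditional Gaussianity of the linearized surrogate then propagates from step $n$ to step $n+1$, which is what legitimizes the step-by-step application of Theorem~13.4.
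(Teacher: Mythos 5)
Your plan coincides, in method, with the paper's own proof: the paper disposes of the proposition in two sentences, as ``a direct application of the Extended Kalman filter approach through linearization of the discrete dynamic with respect to the hidden variable $Y$'' followed by the Kalman filter for conditionally Gaussian sequences of \citet[Theorem 13.4]{liptser2013statistics}, with all details deferred to a companion reference. Your three steps (Taylor-expand $f$ at $\condmean_n$ to obtain the effective transition matrix $f_1=\mathbb{I}+\partial f/\partial y$, freeze $\sigma,g,\ell$ at $(n,\condmean_n,Z_n)$, match blocks against Theorem 13.4) are exactly that argument, and your noise bookkeeping --- only $\B^2$ is shared between signal and observation, cross-covariance $g\ell^\top$, innovation covariance $\ell\ell^\top+h_1\condvar_n h_1^\top$ --- is correct.

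However, the step you defer as ``the main obstacle'' does not close in the way you assume, and this is a genuine gap between your plan and the formula to be proved. In Theorem 13.4 the additive process-noise term of the covariance recursion is $b\circ b=b_1b_1^\top+b_2b_2^\top$; for \eqref{DDA}, where the signal carries both $\sigma\B^1_{n+1}$ and $g\B^2_{n+1}$, this equals $\sigma\sigma^\top+gg^\top$. A faithful execution of your matching therefore produces
\begin{equation}
\condvar_{n+1}=f_1\condvar_n f_1^\top+\sigma\sigma^\top+gg^\top-\left[g\ell^\top+f_1\condvar_n h_1^\top\right]\left[\ell\ell^\top+h_1\condvar_n h_1^\top\right]^{+}\left[g\ell^\top+f_1\condvar_n h_1^\top\right]^\top,
\end{equation}
which carries an extra $gg^\top$ relative to \eqref{eqvar}, and this term does not cancel against the subtracted gain term in general. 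To see that the discrepancy is real, take $\condvar_n=0$: then \eqref{eqvar} returns $\sigma\sigma^\top-g\ell^\top\left[\ell\ell^\top\right]^{+}\ell g^\top$, which need not even be positive semidefinite, whereas the recursion displayed above returns $\sigma\sigma^\top+g(\mathbb{I}-P)g^\top\succeq 0$, where $P=\ell^\top\left[\ell\ell^\top\right]^{+}\ell$ is the orthogonal projector onto the row space of $\ell$. So your derivation, carried out honestly, proves the corrected recursion above rather than \eqref{eqvar} as printed; you should either state that recursion or identify the convention that absorbs $gg^\top$ (the paper's citation-style proof offers no help on this point). Two smaller remarks: the pseudoinverse is needed because the innovation covariance $\ell\ell^\top+h_1\condvar_n h_1^\top$ can be singular (vanishing compartment sizes or control rates), not because $\ell$ is non-square --- $\ell\ell^\top$ is square in any case; and under \eqref{DDA} the predicted observation is $Z_n+h_0+h_1\condmean_n$, so the innovation should contain the term $Z_n$ that both you and the stated recursion omit (presumably it is meant to be absorbed into $h_0$).
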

	
	\begin{proof}
		This result is a direct application of the Extended Kalman filter approach  through linearization of the discrete dynamic with respect to the hidden variable $Y$ and applying  kalman filter for conditional Gaussian sequences, in \citet[Theorem 13.4]{liptser2013statistics}. More details about the derivation of this result can be found in \citet{pif} \qed 
	\end{proof}
	
	As we can see, the recursion for $\condvar$  is a quadratic recursion, often referred to  as the Riccati equation in the filtering context. The recursion for  $\condmean$ is driven by the difference between the actually  observed value $Z_{n+1}$ and the expectation of the observation 
	given $\mathcal{F}_n^{Z}$. The latter difference drives the so-called innovation process
	$(\Ec_n) $ which has the form
	\begin{equation*}
		\Ec_{n+1} = ([\ell\ell^\top + h_1 \condvar_{n} h_1^\top]^+)^{1/2}\big(Z_{n+1}-(h_0+h_1 M_{n} )\big).
	\end{equation*}

	It has been shown in \citet[Theorem 13.5 ]{liptser2013statistics} that the sequence 	$(\Ec_n) $ is an  i.i.d.   sequence of   $\mathcal{N}(0,\mathbb{I})$ random variables  with  respect to the observable  $\sigma$-algebra   $\mathcal{F}_n^Z=\mathcal{F}_n^{\Ec} \vee \mathcal{F}_0^I$, where $\mathcal{F}_0^I$ is the initial information $\sigma$-algebra.
	
	Therefore, the  processes $\condmean $ and $Z$  can be expressed in terms of the innovation process $(\Ec_n) $ and they  are as follows:		
	\begin{eqnarray}
		\condmean_{n+1}\hspace*{-0cm}&=& \hspace*{-0cm}  \condmean_n+f(n, \condmean_{n}, Z_n ) ~+ ~\left[ g\ell^\top+f_1 \condvar_{n} h_1^\top \right] \Big(\left[\ell\ell^\top + h_1 \condvar_{n} h_1^\top\right]^{+}\Big)^{\frac{1}{2}} \Ec_{n+1}, \label{eqmean}\\ [.5ex]		
		Z_{n+1}&=&h_0+h_1 M_{n} +\left[\ell\ell^\top + h_1 \condvar_{n} h_1^\top \right]^{1/2}\Ec_{n+1}. \label{eqobs}  
	\end{eqnarray}
	
	The dynamics of $\condmean$ and $Z$ are deduced directly from the definition of the innovation sequence. This new form of the dynamics of the conditional mean $\condmean$ and the observation $Z$  will be essential to transform the optimal control problem with a partially observed state process into one with a fully observed state process.

	\section{Formulation of the optimal control problem}\label{sect2}
	
	The optimization problem we wish to address is that of a social planner attempting to influence the future course of an epidemic by implementing certain measures.    The implementation of  these measures  requires both  human and financial resources. To assess the performance of these measures, decision-makers must define metrics that depend not only on the implementation cost, but also on the future impact of these measures on the number of infected individuals.
	\subsection{Objective  function }
	The aim of this section is to model the performance criterion of our optimization problem.  The idea is to have a performance criterion that is as realistic as possible. Unlike many papers in the literature that consider quadratic cost mostly because it is mathematically convenient, we want to have costs that 
	depend on the number of individuals affected by a certain counter-measure. In reality, this dependence can be described well by linear functions, if the numbers do not exceed certain capacity limits. Beyond these limits, penalties are added, which can be described by increasing and convex functions.  For example, to be able to account for limited capacities for testing, vaccination as well as the limited capacity of hospital equipment,		
	quadratic penalties can be considered, to make values of control leading to reaching capacity limits less attractive. Another important aspect of epidemic management is the consideration of fixed costs that the social planner must bear as soon as he decides to implement control measures. One such cost is the cost of equipment, facilities and staff required for testing and vaccination, which does not depend directly on the number of individuals tested or vaccinated per unit of time. However, these costs must be supported regardless of the exact level of testing and vaccination given that this level is within a certain range. In our formulation, costs functional will primarily  be described under full information,  and then costs under   partial information will be obtained by taking the conditional expectation w.r.t. the observation $\sigma$-algebra.    
	\subsubsection{Running costs}
	
	It follows that  our running costs should 
	take into account  the various costs related to the treatment of infected individuals, social costs resulting from  the application of  containment measures, detection measures and vaccination of people.	Our main goal  is to reduce   the number of infected individuals as much as possible  over time while simultaneously minimizing  the costs of implementing all the social distancing measures, hospitalization costs, testing and vaccination costs, as well as the economic impact of interventions.
	
	To  define  our running costs,   we consider  functions  of the following   form depending on 	
	the number $x$ of ``affected'' individuals (per unit of time), and  
	penalties if a certain  capacity threshold $\overline x$ is exceeded
	\begin{equation}
		C_k:~\mathbb{R}_+^2~ \rightarrow~ \mathbb{R}_+,~~
		(x,\overline{x})~ \mapsto  ~ C_k (x,\overline{x})
	\end{equation} 
	
	It is reasonable  that these functions be increasing and convex with respect to the variable $x$.  Examples of such functions include those of the form
	$$C_k (x,\overline{x})= 
	\begin{cases}
		\overline{
			a}_k \one_{\{x >0\}} +a_k x, & x\le \overline x,\\[0.5ex]
		\overline{
			a}_k \one_{\{x >0\}} +a_k x+ b_k (x-\overline{x})^2, & x> \overline x,
	\end{cases} $$
	where $\overline{
		a}_k$,  $a_k$   and $b_k$  are positive   constants, with $\overline{
		a}_k\one_{\{x >0\}}$ which stands for the fixed cost,  $a_k$  is the cost per affected person by unit of time and $b_k$ can be viewed as the penalty weight. The main feature of the fixed costs are to  vanish if the control value is zero, otherwise they are constant  and therefore do not  depend on the size of $u_k$ implemented.   Extreme cases for this cost function  are  a purely linear cost  if the threshold $\overline{x}$  is infinity, a purely quadratic  cost if  the threshold  $\overline{x}$  is 0, and no fixed cost, $\overline{a}_k=0$. An illustration of the typical curve for such a cost  function is given in Figure \ref{figcost}. 
	
	\begin{figure}[h]
		\begin{center}
			\includegraphics[width=3.5in]{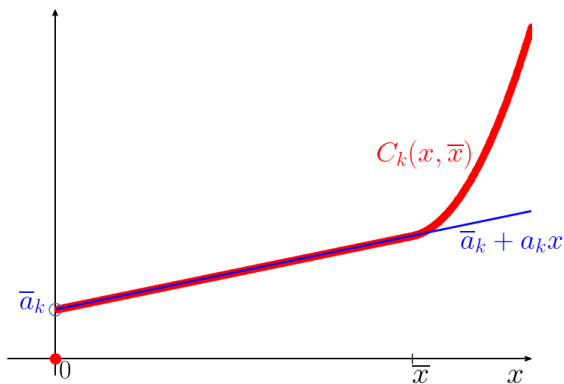}		
		\end{center}
		\caption{Typical curve of running costs functions featuring the fixed cost jump for $x>0$, linear cost in the interval $(0,\overline{x})$ and linear plus quadratic cost for $x> \overline{x}$.   }
		\label{figcost}
	\end{figure}

	\begin{enumerate}
		\item 	\textbf{Economic costs:}
		At a given time $t$,  individuals who are primarily concerned by social distancing measures are susceptible individuals. Besides asymptomatic people, namely  
		non-detected  infectious $(I^{-})$ and non-detected  recovered $(R^{-})$ are concerned as well since their infection status is not known. Furthermore,  since the social distancing measures usually  apply  even to recovered individuals,  we define those who are directly  affected by restrictive measures at time $n$  as 
		$X^{\text{work}}_n=S_n+I^{-}_n+R^{-}_n+R^{+}_n$. Using the normalization,  $X^{\text{work}}_n=N-I^+_n-H_n$ becomes observable and  represents the labor force within the population which are not  in quarantine ($I^+$).  
		When restrictive measures with strength $u^L>0$  are implemented, the proportion     
		$u^L X^{\text{work}}$  of individuals is unable to fulfill their role in the social economy as a labor force. Consequently,  the loss of  productivity due to   restriction measures  will depend on the number of  people  concerned by  the lock-down. In the literature, as in  \citet{charpentier2020covid,  federico2024optimal,federico2021taming} a quadratic function or more generally, a convex function  is commonly used to describe the socio-economic cost because of its mathematical tractability.
		Therefore, we define  the  social distancing   running cost at time $n$ by  
		\begin{equation*}
			C_L \left(u^L_n X^{ \text{Work}}_n,0\right),
		\end{equation*}
		
		where the threshold is taken to be  $0$   which allows   to  penalize  any  additional number of  people affected  by  restrictive measures. \\
		
		\item \textbf{Detection costs:}
		Decision-makers must  design   detection   policies  for   individuals who are infected and ignore their status, and eventually for those who have recovered from the disease but also ignore their status. In this regard, they must also   plan a testing strategy  that  will provide more information about  the current state  of  the epidemic  and  reduce some uncertainties  about  the   current  and future course  of  the   epidemic. Those who  can be  tested  at a given time $n$ are susceptible $S$, non-detected infectious $I^-$  and non-detected recovered $R^-$.	Then,  the number of  individuals  who can be  tested is    $X^{\text{Test}}=I^{-}+R^{-}+S$.
		Hence, the running  cost for the  testing control variable  $u^T$  is  as follows :
		
		\begin{equation*}
			C_T \left(u^T_n X^{\text{Test}}_n,\overline{ \text{x}}^{\text{Test}}\right),
		\end{equation*}
		
		where $\overline{ \text{x}}^{\text{Test}}>0$  represents  the maximum capacity for testing that can be reached. Note that in a model with constant population size,  as in our case, it holds   that  $X^{\text{Test}}= N-(I^{+}+R^{+}+ H)$ which makes $X^{\text{Test}}$ observable.
		
		\item  \textbf{Vaccination costs:}			
		Large-scale vaccination programs require significant human and financial resources. We assume that this vaccination cost depends on the number of vaccinated persons per unit of time, which is a portion of $X^{\text{Test}}$.
		Therefore, the running  cost corresponding to the vaccination strategy $u^V$ is given as 
		
		\begin{equation*}
			C_V  \left(u^V_n X^{\text{Test}}_n,\overline{ \text{x}}^{\text{Vacc}} \right).
		\end{equation*}
		
		Similarly  to the detection   cost,  we assume  a vaccination threshold denoted by   $\overline{ \text{x}}^{\text{Vacc}}>0$  that represents the maximum capacity for vaccination during a time period of length $\Delta t$ that could not be exceeded. 
		
		\item \textbf{Hospitalization costs:} Due to the epidemic, the hospital is subject to more patients  and  some equipment are used more intensively. This implies additional cost that  we can model as 
		
		\begin{equation*}
			C_H\left(H_n,\overline{ \text{x}}^{H}\right),		  
		\end{equation*}
		
		where  $\overline{ \text{x}}^{H}>0$  is the hospital capacity that can not be exceeded. Note that the fixed cost associated to the hospital cost that is related to ICU equipment, applies as long as there are infected people within the population.
		
		\item \textbf{Infection Costs or Infection Penalties:}			
		It is reasonable to assume that  during an epidemic the   government expenses and the social burden of dealing with the disease are in some sense positively correlated with the number of people infected. For instance,	the treatment and medical  care  of patients in hospitals and in specialized hospital centers like  ICU requires  considerable resources.  Further, more infections increase the risk of social tensions.		The following running costs model infection costs due to non-detected   and detected infected respectively. \\
		\begin{equation*}
			C^-_I \left(I^-_n,\overline{ \text{x}}^{I^-}\right),~~C^+_I\left(I^+_n,\overline{ 	\text{x}}^{I^+}\right),		  
		\end{equation*}
		
		where $\overline{\text{x}}^{I^-}>0$   and $\overline{\text{x}}^{I^+}>0$  are respectively the maximum number of non-detected and detected  infected   that could be tolerated and effectively  handled  with existing equipment.		 It  is  clear  that these    capacities  are  limited and  can not  be  significantly extended  in  a  short  period  of time.  We can imagine that the social  cost of the  non-detected infected  is much higher  than the  cost of the detected infected because non-detected infectious are likely to cause more harm to the society since detected infected are in quarantine and do no longer spread the disease. Note that the fixed cost associated with this infection cost is zero  because this cost is not directly related to a control variable that is activated. 				
		It should be highlighted that these infection costs  could  play a significant role in flattening  the curve of the epidemic, as  high numbers of infected  individuals are penalized by these costs.		
	\end{enumerate}

	We are now in a  position to write down  the overall running cost functional of  our  control problem. This will  be denoted for the full information framework by $\Psi^F$ and obtained by summing up previously mentioned costs.  Therefore, in a period of time of length $\Delta t$  the one-period cost writes:
	
	\begin{eqnarray*}
		\Psi^F(X_n,u_n) &= \left[ C_L(u^L_n X_n^{ \text{Work}},0) + 	C_T(u^T_n X^{\text{Test}}_n,\overline{ \text{x}}^{\text{Test}})+ C_V(u^V_n X^{\text{Test}}_n,\overline{ \text{x}}^{\text{Vacc}}) +C_H\left(H_n,\overline{ \text{x}}^{H}\right) \right. \\
		&\hspace{-6cm}+  \left. C^-_I(I^-_n,\overline{ \text{x}}^{I^-})+	C^+_I(I^+_n,\overline{ 	\text{x}}^{I^+})\right] \Delta t,
	\end{eqnarray*}
	
	\noindent where the control is $u_n= (u^L_n,u^T_n, u^V_n)^\top$.

	\subsubsection{Terminal costs}
	
	At the horizon time $T=N_t \Delta t$,  policymakers do no longer have to bear the costs associated with social distancing, testing and vaccination. Nevertheless,  they must continue to care  for  the remaining infected people in the population,  since the end of the control measure does not necessarily mean that the disease is not present in the population anymore. In this regard, we consider  the terminal cost to be  a scaled value of the infection costs and hospital costs at the  terminal time which  is denoted by $\Phi^F$ and given by 
	
	\begin{equation*}
		\Phi^F(X_{N_t}) =  C^{I^-}_{N_t}(I^-_{N_t},\overline{ \text{x}}^{I^-}_{N_t})+	C^{I^+}_{N_t}(I^+_{N_t},\overline{ 	\text{x}}^{I^+}_{N_t})+ C^H_{N_t}\left(H_{N_t},\overline{ \text{x}}^{H}\right),
	\end{equation*}
	
	\noindent 	where  $\overline{ 	\text{x}}^{I^-}_{N_t}>0$  and $\overline{ 	\text{x}}^{I^+}_{N_t}>0$  are numbers of non-detected  and detected infected  that can be tolerated at terminal time  respectively.

	\subsubsection{Performance criterion}
	
	Given  an initial  state  $X_0=x$  and   an admissible  control process $u$, the expected aggregated cost to be minimized  is defined  by 
	
	\begin{align*} \mathcal{J}^F(x,u)&= \E\Big[\sum_{n=0}^{N_t-1}  \Psi^F(X_n ,u_n)+ \Phi^F(X_{N_t}) \Big| X_0 = x\Big].
	\end{align*}
	
	The optimization  problem is to find the minimizer of  this expected aggregated  cost in  the  set of controls $u$  which satisfies some conditions that  will be specified later. A crucial observation  we can make about  this performance criterion  is that it depends not only  on the observable component $Z$ of the state process  $X=(Y^\top,Z^\top)^\top$, but also  on the hidden component $Y$. This is a non-standard objective function,  since decisions or  controls need to be  based only  on  observable quantities. Consequently, this optimal control problem is  an optimal control problem with partially observed  state process. As a result, not only  the state process is not adapted to the filtration generated by the observation process, but also the objective function depends on the non-adapted process $Y$. This does not fit to the setting of a  standard control problem. To overcome this problem, we will take advantage of the filtering  machinery  that  allows us   to transform this optimal control problem into  an equivalent  control problem with adapted and   fully observed state process.

	\subsection{Optimal control problem with partial   information }
	We are now in a position to define the associated optimal control  problem  with partial  information, i.e.,  a control problem for which the state process as well as the performance criterion are adapted with respect to  the observation filtration $\mathbb{F}=(\mathcal{F}^Z_n)_{n\geq 0}$.  In view of the introduction of  the new controlled state process, the performance criterion is expressed  in terms of the conditional mean $\condmean$ and the  conditional  variance $\condvar$ in the following proposition. 	
	\begin{proposition}\label{costprop}
		Based on the observable  filtration $\mathbb{F}$,  the new performance criterion\\ $	\mathcal{J}(x_0,u)=\D  \E_{m_0, q_0 ,z}[\mathcal{J}^F(X_0,u)] $ with $Z_0=z, x_0=(m_0, q_0 ,z)$, is the conditional  expectation of $\mathcal{J}^F$ given the initial information   and  can be  written as follows,
		\begin{equation}
			\mathcal{J}_0(x_0,u)=\D  \E_{m_0, q_0 ,z}\Big[\sum_{n=0}^{N_t-1} \Psi(\condmean_n,\condvar_n, Z_n,u_n)      + \Phi(\condmean_{N_t},\condvar_{N_t}, Z_{N_t})  	\Big],\\
		\end{equation}
		
		\noindent	where $\Psi$  and $\Phi$  are two Borel-measurable functions,  defined for $x=(m^\top, q^\top ,z^\top)^\top $,\\$m=(m^1,m^2)^\top$, $q=(q^1,q^2,q^{12})^\top$, $z=(z^1,z^2,z^3)^\top$  and $u_n=\nu=(\nu^L,\nu^T,\nu^V)^\top$ by 
		\begin{align*}
			\Psi(m, q ,z,\nu ) =& \Big [ C_L(\nu^L x^{ \text{Work}},0) + 	C_T(\nu^T x^{\text{Test}},\overline{ \text{x}}^{\text{Test}})+ C_V(\nu^V x^{\text{Test}},\overline{ \text{x}}^{\text{Vacc}}) \\& +C_H\left(z^3,\overline{ \text{x}}^{H}\right)  +	C^+_I(z^1,\overline{ 	\text{x}}^{I^+}) \\
			&+  	a_{I^-} m^1+ b_{I^-}  \int_{\overline{ \text{x}}^{I^-}}^{\infty} (y-\overline{ \text{x}}^{I^-})^2  \dfrac{1}{\sqrt{2 \pi q^1}} \exp \Big(- \frac{1}{2} \dfrac{(y-m^1)^2}{q^1}\Big) dy \Big ] \Delta t,\\	
			\Phi(m, q ,z)  = & a_{I^-,N_t} m^1+ b_{I^-,N_t}  \int_{\overline{ \text{x}}^{I^-}}^{\infty} (y-\overline{ \text{x}}^{I^-})^2  \dfrac{1}{\sqrt{2 \pi q^1}} \exp \Big(- \frac{1}{2} \dfrac{(y-m^1)^2}{q^1}\Big) dy\\
			& +	C^{I^+}_{N_t}(z^1,\overline{ 	\text{x}}^{I^+}_{N_t})+ C^H_{N_t}\left(z^3,\overline{ \text{x}}^{H}\right).
		\end{align*}
		
		\noindent with  $x^{ \text{Work}}=N-z^1$, $x^{\text{Test}}=N-z^1-z^2$.
	\end{proposition}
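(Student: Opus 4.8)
The plan is to exploit the law of iterated expectations together with the fact that, within its linear--Gaussian approximation, the extended Kalman filter of the preceding proposition delivers the conditional law of the hidden state $Y_n$ given $\mathcal{F}^Z_n$. First I would observe that $\E_{m_0,q_0,z}[\mathcal{J}^F(X_0,u)]$, with $\mathcal{J}^F(X_0,u)=\E[\,\cdot\mid X_0\,]$, collapses by total expectation into a single expectation over the full dynamics started from $Y_0\sim\mathcal{N}(m_0,q_0)$ and $Z_0=z$. Conditioning each summand on the observation $\sigma$-algebra (on $\mathcal{F}^Z_{N_t}$ for the terminal term) and using the tower property gives
\begin{equation*}
\mathcal{J}_0(x_0,u) = \E_{m_0,q_0,z}\Big[\sum_{n=0}^{N_t-1} \E\big[\Psi^F(X_n,u_n)\,\big|\,\mathcal{F}^Z_n\big] + \E\big[\Phi^F(X_{N_t})\,\big|\,\mathcal{F}^Z_{N_t}\big]\Big].
\end{equation*}
Since admissible controls are adapted to $\mathbb{F}=(\mathcal{F}^Z_n)$, each $u_n$ is $\mathcal{F}^Z_n$-measurable and may be treated as known inside the inner conditional expectation.

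Next I would split $\Psi^F$ into its observable and hidden parts. The terms $C_L$, $C_T$, $C_V$, $C_H$ and $C^+_I$ depend on $X_n$ only through quantities that are, by the normalizations recorded in the statement ($x^{\text{Work}}=N-z^1$, $x^{\text{Test}}=N-z^1-z^2$) and the identities $H_n=z^3$, $I^+_n=z^1$, functions of $Z_n$ alone; together with the $\mathcal{F}^Z_n$-measurability of $u_n$ this makes these five terms $\mathcal{F}^Z_n$-measurable, so they pass through $\E[\,\cdot\mid\mathcal{F}^Z_n]$ unchanged and reproduce the corresponding summands of $\Psi$. The only genuinely hidden contribution is $C^-_I(I^-_n,\overline{x}^{I^-})$, which depends on the first hidden coordinate $I^-_n=(Y_n)_1$.

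For this term I would invoke the defining property of the filter: within the extended Kalman approximation the conditional distribution of $Y_n$ given $\mathcal{F}^Z_n$ is $\mathcal{N}(\condmean_n,\condvar_n)$, so the marginal law of $(Y_n)_1$ is $\mathcal{N}(m^1_n,q^1_n)$ with $m^1_n=(\condmean_n)_1$ and $q^1_n=(\condvar_n)_{11}$. Recalling that the fixed cost of the infection penalty vanishes ($\overline{a}_{I^-}=0$), I would write $C^-_I(x,\overline{x}^{I^-})=a_{I^-}x+b_{I^-}(x-\overline{x}^{I^-})^2\one_{\{x>\overline{x}^{I^-}\}}$ and compute
\begin{equation*}
\E\big[C^-_I(I^-_n,\overline{x}^{I^-})\,\big|\,\mathcal{F}^Z_n\big] = a_{I^-}m^1_n + b_{I^-}\int_{\overline{x}^{I^-}}^{\infty}(y-\overline{x}^{I^-})^2\,\tfrac{1}{\sqrt{2\pi q^1_n}}\exp\!\Big(-\tfrac{1}{2}\tfrac{(y-m^1_n)^2}{q^1_n}\Big)\,dy,
\end{equation*}
which is exactly the last bracketed contribution to $\Psi(\condmean_n,\condvar_n,Z_n,u_n)$. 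Collecting the observable terms with this one reproduces $\Psi$, and the identical argument applied at $n=N_t$ to the terminal penalties $C^{I^-}_{N_t}$, $C^{I^+}_{N_t}$, $C^H_{N_t}$ yields $\Phi$.

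The main obstacle, I expect, is less the computation than the justification of the two structural facts on which it rests: that $\condmean_n$ and $\condvar_n$ are $\mathcal{F}^Z_n$-measurable, so that $\Psi(\condmean_n,\condvar_n,Z_n,u_n)$ is a genuine observable cost, which follows from the filter recursions of the preceding proposition being driven solely by the observations; and that the conditional law of $Y_n$ is (approximately) Gaussian with the stated parameters, which is precisely the content of the conditionally-Gaussian/EKF framework of \citet[Theorem 13.4]{liptser2013statistics}. Once these are granted, the equality is exact within that approximation, and the interchange of the finite sum with the expectations, together with the use of the tower property, is routine.
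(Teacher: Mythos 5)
Your proposal is correct and follows essentially the same route as the paper's own proof: both apply the tower property to condition each summand on $\mathcal{F}^Z_n$, observe that the lockdown, testing, vaccination, hospital and detected-infection costs are functions of the observations alone and hence pass through unchanged, and compute the conditional expectation of the hidden infection cost $C^-_I$ via the (EKF-approximated) Gaussian conditional law $\mathcal{N}(\condmean_n,\condvar_n)$ of $Y_n$ given $\mathcal{F}^Z_n$, with the identical treatment of the terminal cost. Your additional remarks on the $\mathcal{F}^Z_n$-measurability of the controls and of the filter processes make explicit points the paper leaves implicit, but do not change the argument.
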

	
	The proof of this proposition can be found in Appendix \ref{proof}. Note that with some abuse of notation, the symmetric matrix $\condvar$ is considered as a vector of its entries, $\condvar=(\condvar^1,\condvar^2,\condvar^{12})^\top$, similarly, $q=(q^1,q^2,q^{12})^\top$ . A simpler form of the integral appearing in the Proposition \ref{costprop}  can be obtained in terms of the cumulative distribution function of the standard normal distribution. This form is given in the following Lemma \ref{lem_integral} proved in Appendix \ref{prooflem} and is quite helpful for numerical purposes. 
	
	\begin{lemma}\label{lem_integral}
		The integral  $ \displaystyle \int_{\overline{ \text{x}}^{I^-}}^{\infty} (y-\overline{ \text{x}}^{I^-})^2  \dfrac{1}{\sqrt{2 \pi q^1}} \exp \Big(- \frac{1}{2} \dfrac{(y-m^1)^2}{q^1}\Big) dy $  can be expressed  as 
		
		$\big((m^1-\overline{ \text{x}}^{I^-})^2 +q^1 \big) \Phi_{\mathcal{N}}\Big( \dfrac{m^1-\overline{ \text{x}}^{I^-}}{\sqrt{q^1}}\Big ) 
		+ (m^1-\overline{ \text{x}}^{I^-}) \sqrt{\dfrac{q^1}{2 \pi}}
		\exp \Big(- \frac{1}{2} \dfrac{(m^1-\overline{ \text{x}}^{I^-})^2}{q^1}\Big)  ,$
		
		where  $\Phi_{\mathcal{N}}$  is the cumulative  distribution function of  the standard normal distribution. 
		
	\end{lemma}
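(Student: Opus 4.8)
The plan is to recognize the integral as the truncated second moment $\E\big[(Y-a)^2\one_{\{Y>a\}}\big]$ of a Gaussian $Y\sim\mathcal{N}(\mu,\sigma^2)$, writing $a:=\overline{x}^{I^-}$, $\mu:=m^1$ and $\sigma^2:=q^1$ for brevity, and to evaluate it by reducing everything to standard normal moments. First I would standardize via the substitution $t=(y-\mu)/\sigma$, so that $y-a=\sigma(t+d)$ with $d:=(\mu-a)/\sigma=(m^1-\overline{x}^{I^-})/\sqrt{q^1}$, and the lower limit $y=a$ becomes $t=-d$. The integral then becomes $\sigma^2\int_{-d}^{\infty}(t+d)^2\varphi(t)\,dt$, where $\varphi$ denotes the standard normal density. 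Expanding the square gives $\sigma^2\big[\int_{-d}^{\infty}t^2\varphi\,dt+2d\int_{-d}^{\infty}t\varphi\,dt+d^2\int_{-d}^{\infty}\varphi\,dt\big]$.

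Next I would evaluate the three truncated standard-normal integrals separately. The zeroth moment is $\int_{-d}^{\infty}\varphi(t)\,dt=\Phi_{\mathcal{N}}(d)$ by the symmetry $\Phi_{\mathcal{N}}(d)=1-\Phi_{\mathcal{N}}(-d)$. For the first moment I use the identity $\varphi'(t)=-t\varphi(t)$, which gives $\int_{-d}^{\infty}t\varphi(t)\,dt=\big[-\varphi(t)\big]_{-d}^{\infty}=\varphi(d)$, again using that $\varphi$ is even. The second moment is the only one needing a little work: writing $t^2\varphi(t)=-t\varphi'(t)$ and integrating by parts yields $\int_{-d}^{\infty}t^2\varphi(t)\,dt=\big[-t\varphi(t)\big]_{-d}^{\infty}+\int_{-d}^{\infty}\varphi(t)\,dt=-d\varphi(d)+\Phi_{\mathcal{N}}(d)$, where the boundary term at $+\infty$ vanishes because $t\varphi(t)\to 0$.

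Substituting these back, the bracket collapses to $(1+d^2)\Phi_{\mathcal{N}}(d)+d\varphi(d)$, so the whole integral equals $\sigma^2\big[(1+d^2)\Phi_{\mathcal{N}}(d)+d\,\varphi(d)\big]$. Finally I would undo the shorthand: since $\sigma^2=q^1$ and $\sigma^2 d^2=(m^1-\overline{x}^{I^-})^2$, the coefficient of $\Phi_{\mathcal{N}}$ becomes $(m^1-\overline{x}^{I^-})^2+q^1$, and since $\sigma^2 d\,\varphi(d)=\sqrt{q^1}\,(m^1-\overline{x}^{I^-})\,\varphi(d)$ with $\varphi(d)=(2\pi)^{-1/2}\exp(-\tfrac12(m^1-\overline{x}^{I^-})^2/q^1)$, the second term matches the claimed expression exactly. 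The computation is entirely elementary; the only place to be careful — the main obstacle, such as it is — is the consistent handling of signs and the even/odd symmetries of $\varphi$ and $\Phi_{\mathcal{N}}$ across the three integrals, together with checking that $d$ (which is negative when $m^1<\overline{x}^{I^-}$) is substituted with the correct sign so that $\Phi_{\mathcal{N}}(d)$ carries the right argument.
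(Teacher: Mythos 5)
Your proof is correct and follows essentially the same route as the paper: the paper's two successive substitutions ($\widetilde{y}=y-\overline{x}^{I^-}$ followed by $\widehat{y}=(\widetilde{y}-\widehat{m})/\sqrt{q^1}$) compose to exactly your single standardization $t=(y-m^1)/\sqrt{q^1}$, after which both arguments expand the quadratic and reduce to truncated standard-normal moments identified with $\Phi_{\mathcal{N}}$. Your write-up merely makes explicit the three moment integrals that the paper compresses into ``after expansion and identification of the cumulative distribution function.''
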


	We have therefore transformed  the control problem with partially observed state process 	$X=(Y^\top,Z^\top)^\top$ into a corresponding  control problem with fully  observed state process 	$X^P=(\condmean^\top,\condvar^\top,Z^\top)^\top$. This is done by replacing  the hidden  state $Y$  by the   filter $(\condmean^\top,\condvar^\top)^\top$  and rewriting the dynamics  of $\condmean$ and $Z$ in terms of the innovation process $\Ec$. 
	The new controlled state process is 
	$X^P=(\condmean^\top,\condvar^\top,Z^\top)^\top$
	that has  dynamics generated by the following transition operators,
	\begin{equation} \label{stateXP}
		\begin{array}{rll}
			\condmean_{n+1}&=  \mathcal{T}^M (n,M_n,Q_n,Z_n,u_n,\Ec_{n+1}), \\[0.5ex]			
			\condvar_{n+1} &=  \mathcal{T}^Q (n,M_n,Q_n,Z_n,u_n,\Ec_{n+1}) ,	\\[0.5ex]		
			Z_{n+1}&= \mathcal{T}^Z (n,M_n,Q_n,Z_n,u_n,\Ec_{n+1}),
		\end{array}
	\end{equation}
	
	\noindent with the transitions operators having following  structures:   
	
	\begin{equation} \label{stateXP2}
		\begin{array}{rll}			
			\mathcal{T}^M (n,y,q,z,\nu,\varepsilon) &=f_M(n,y,q,z,\nu) + ~g_M(n,y,q,z,\nu)\,\varepsilon\\[0.5ex]	
			\mathcal{T}^Q (n,y,q,z,\nu,\varepsilon)&= f_Q(n,y,q,z,\nu) \\[0.5ex]		
			\mathcal{T}^Z (n,y,q,z,\nu,\varepsilon)&=
			f_Z(n,y,q,z,\nu) +~g_Z(n,y,q,z,\nu)\,\varepsilon
		\end{array}
	\end{equation}
	and where $f_M,~f_Q,~f_Z, ~g_M,~g_Z$ are continuous and measurable functions, with the precise expressions  that  can be derived from equations \eqref{eqvar},\ref{eqmean}, \ref{eqobs}. More details are provided in Appendix \ref{transfunct}. 
	This new controlled state process is adapted to  the observable filtration 	$\mathbb{F}= (\mathcal{F}^Z_n)_{n\geq 0}$. It is also important to note firstly that the processes $\condmean$  and $Z$  are driven by  the same noise. Secondly,   the dynamic of the conditional variance $\condvar$ does not include  any  noise term. These two facts lead to a control problem with a degenerated controlled stochastic state process  $X^P$.  
	
	Our optimal control problem  can now be treated  as a  MDP   with state process  $X^P=(\condmean^\top,\condvar^\top,Z^\top)^\top$  taking values in a state space $\mathcal{X}$ which is a subset of $\mathbb{R}^d$, where $d$  is the number of relevant 
	variables in $X^P$. For the epidemic model considered, we have $d=8$ since the conditional mean $\condmean$ is two-dimensional, the conditional covariance matrix has three relevant entries because it is symmetric and finally the observation $Z$  is three-dimensional. The dynamics of the state process $X^P$ which is adapted to the observable filtration $\mathbb{F}^Z$, is described by a transition operator  $\mathcal{T}=\left( (\mathcal{T}^M)^\top ,(\mathcal{T}^Q)^\top,(\mathcal{T}^Z)^\top \right)^\top$ with a Gaussian transition kernel, and  equation \eqref{stateXP} is written as follows
	$X_{n+1}^P = \mathcal{T}(n,X_n^P,u_n,\Ec_{n+1})$.
	
	\smallskip
	
	\noindent\textbf{Admissible controls.} We denote by  $\mathcal{A}$ the class of admissible controls,
	consisting of  control processes $u$ of the   Markov type $u_n=\widetilde u(n,X^P_n)$, ($\widetilde u$ being a measurable function)   and taking values  in the set of feasible control values $\mathcal{U}=  [0,1]\times \mathbb{R}_+^2$. 
	
	\begin{tabular}[t]{lcr}
		$\mathcal{A}$& $=$ &$\big\{\!\!			
		(u_n)_{n=0,\ldots,N_t-1} ~|~  \text{ Markov control } u_n=\widetilde u(n,X^P_n),$ \\[0.5ex]
		&&$\text{control constraints } u_n\in \mathcal{U}=  [0,1]\times \mathbb{R}_+^2
		\big\}$.			
	\end{tabular}

	\smallskip
	
	\noindent \textbf{Performance criterion.} Given an admissible  strategy $u=(u_0,\ldots,u_{N_t-1})$, its performance will be evaluated at time $n$,	 for $n=0,\ldots, N_t,~~ x=(m,q,z)$ by the expected aggregated cost 		of the problem starting  at time $n$ at the state $X^P_n=x$
	\begin{align}
		J(n,x,u)=	\E\bigg[\sum_{k=n}^{N_t-1} \Psi(X^P_k ,u_k)+ \Phi(X^P_{N_t}) \Big| {X^P_n=x}\bigg]. \label{objfunctjn}
	\end{align}
	
	\smallskip
	
	\noindent \textbf{Optimization problem.}		The objective will  be to  look for the strategy $u^*\in \mathcal{A} $ that minimizes the performance criterion.  Hence, we have that 
	
	\begin{align}
		J(n,x,u^*) =&V(n,x) :=\inf\limits_{u\in \mathcal{A}}  J(n,x,u).  \label{optpbn}
	\end{align} 
	
	In the case that a strategy $u^*$ does exist, it   is called the optimal strategy   and the associated cost functional defines the value function $V$.
	
	\smallskip
	
	\noindent	\textbf{	Dynamic programming approach.} To solve this control problem with   complete information, we use the dynamic programming approach, which is based on  the Bellman principle presented in \citet{bauerle2011markov}.  
	This approach consists of embedding the initial optimization problem with objective function $\mathcal{J}_0$  into a family of optimization problems with the objective function $J$ in equation \eqref{objfunctjn} and initial time
	$n = 0,\cdots,N_t-1$ and initial state $X^P_n = x \in \mathcal{X}$. 
	This approach 
	leads to the following necessary optimality condition called Bellman equation or dynamic
	programming equation (DPE).
	
	\begin{theorem}			
		{(Bellman Equation / Dynamic Programming Equation)}\\
		The value function satisfies 
		\begin{align}		\label{DPE}
			V(n,x)&=\inf_{\nu \in  \mathcal{U}}\Big\{\Psi(x,\nu)+ 
			\E_{n,x} \big[V(n+1,\mathcal{T}(n,x,\nu,\Ec_{n+1})\big]\Big\}, ~n=0,1,\ldots,N_t-1 \\ \label{BE} 
			V(N_t,x)&=\Phi(x) ,     \qquad \text{(terminal condition)}.
		\end{align}
		Here, the transition operator $\mathcal{T}$, with Gaussian transition kernel generated by  $\Ec_{n+1}$  is given by \eqref{stateXP}.
	\end{theorem}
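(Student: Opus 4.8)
The plan is to establish \eqref{DPE} and \eqref{BE} by backward induction on $n$, within the Markov decision process framework of \citet{bauerle2011markov}. The key structural facts are that the transformed state $X^P=(\condmean^\top,\condvar^\top,Z^\top)^\top$ evolves through $X^P_{n+1}=\mathcal{T}(n,X^P_n,u_n,\Ec_{n+1})$ with the innovations $(\Ec_n)$ forming an i.i.d. $\mathcal{N}(0,\mathbb{I})$ sequence independent of $\mathcal{F}^Z_n$, and that admissible controls are Markov feedback laws $u_n=\widetilde u(n,X^P_n)$; together these make $X^P$ a (time-inhomogeneous) controlled Markov chain. The terminal identity \eqref{BE} is immediate: at $n=N_t$ the running-cost sum in \eqref{objfunctjn} is empty, so $J(N_t,x,u)=\E[\Phi(X^P_{N_t})\mid X^P_{N_t}=x]=\Phi(x)$ for every admissible $u$, whence $V(N_t,x)=\Phi(x)$.

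For the recursion at a fixed $n<N_t$, I would write the right-hand side of \eqref{DPE} as $W(n,x):=\inf_{\nu\in\mathcal{U}}\{\Psi(x,\nu)+\E_{n,x}[V(n+1,\mathcal{T}(n,x,\nu,\Ec_{n+1}))]\}$ and prove $V(n,x)=W(n,x)$ through two opposite inequalities. For $V\ge W$, fix any admissible $u$ with $u_n=\widetilde u(n,\cdot)$, put $\nu=\widetilde u(n,x)$, and split \eqref{objfunctjn} into the time-$n$ cost and the remaining cost. Conditioning on $X^P_{n+1}$ and invoking the Markov property yields $J(n,x,u)=\Psi(x,\nu)+\E_{n,x}[J(n+1,X^P_{n+1},u)]$; since the restriction of $u$ to times $\ge n+1$ is a feasible strategy for the subproblem started at $X^P_{n+1}$, we have $J(n+1,X^P_{n+1},u)\ge V(n+1,X^P_{n+1})$, so $J(n,x,u)\ge W(n,x)$. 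Taking the infimum over $u$ gives $V(n,x)\ge W(n,x)$.

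For the reverse inequality $V\le W$, fix $\nu\in\mathcal{U}$ and $\eps>0$, and for each state $x'$ reachable at time $n+1$ select a strategy $u^{x'}$ that is $\eps$-optimal for the subproblem started at $(n+1,x')$, i.e. $J(n+1,x',u^{x'})\le V(n+1,x')+\eps$, chosen so that $x'\mapsto u^{x'}$ is measurable. Concatenating the constant action $\nu$ at time $n$ with this $\eps$-optimal continuation defines an admissible Markov strategy whose cost equals $\Psi(x,\nu)+\E_{n,x}[J(n+1,X^P_{n+1},u^{X^P_{n+1}})]\le\Psi(x,\nu)+\E_{n,x}[V(n+1,X^P_{n+1})]+\eps$. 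Minimizing over $\nu$ and letting $\eps\downarrow0$ then yields $V(n,x)\le W(n,x)$, which closes the induction step.

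The main obstacle is the measurable-selection step underlying $V\le W$: one must ensure that the $\eps$-optimal continuations can be glued into a single measurable feedback law, so that the concatenated control is genuinely admissible. This is precisely where the standing hypotheses enter — the continuity and Borel-measurability of the transition functions $f_M,f_Q,f_Z,g_M,g_Z$ and the nonnegativity (hence boundedness below) of $\Psi$ and $\Phi$ place the problem inside the class of MDPs for which the selection and verification theorems of \citet{bauerle2011markov} apply. The degeneracy of the dynamics — that $\condvar_{n+1}$ carries no noise and that $\condmean$ and $Z$ are driven by the same innovation — does not interfere, since the argument relies only on the Markov property and the i.i.d. structure of $(\Ec_n)$, not on any nondegeneracy of the Gaussian transition kernel.
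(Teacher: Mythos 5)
Your proposal cannot be compared line-by-line with ``the paper's proof'' because the paper does not prove this theorem at all: it states the Bellman equation as an instance of the dynamic programming principle and refers to \citet{bauerle2011markov} for the underlying theory. Measured against that, your backward induction is the standard argument sitting behind the citation, and most of it is sound: the empty-sum argument for the terminal condition \eqref{BE} is correct, and the direction $V\ge W$ (conditioning at time $n$, the Markov property under a feedback law, and domination of the continuation cost by $V(n+1,\cdot)$) is exactly right.

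The direction $V\le W$, however, contains a genuine flaw as written. You choose, for each state $x'$ at time $n+1$, a separate $\eps$-optimal continuation $u^{x'}$, concatenate with the action $\nu$ at time $n$, and assert that this ``defines an admissible Markov strategy.'' It does not: for $k>n+1$ the resulting action is a function of the pair $(X^P_{n+1},X^P_k)$, i.e.\ the policy is history-dependent, while the paper's admissible class $\mathcal{A}$ contains only feedback laws of the form $u_k=\widetilde u(k,X^P_k)$. Since $V$ is an infimum over $\mathcal{A}$, the cost of this concatenated object does not dominate $V(n,x)$ without further argument. Moreover, the repair you sketch in your last paragraph --- measurably ``gluing'' the per-state continuations into one feedback law --- is not the right mechanism: different $x'$ prescribe conflicting actions at the same $(k,x)$, and no diagonal selection retains $\eps$-optimality. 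The standard repair, and what the selection theorems in \citet{bauerle2011markov} actually deliver, is to strengthen the induction hypothesis: at each stage pick one measurable $\eps$-minimizer of the Bellman operator, so that a \emph{single} Markov feedback law is $(N_t-n-1)\eps$-optimal uniformly in the initial state; concatenating $\nu$ with that one law stays inside $\mathcal{A}$ and gives $V(n,x)\le\Psi(x,\nu)+\E_{n,x}\big[V(n+1,X^P_{n+1})\big]+(N_t-n-1)\eps$. With this per-stage (rather than per-state) selection your induction closes, and the resulting proof is correct and, unlike the paper, self-contained up to the measurable-selection theorem.
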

	For all $n=0,1,\ldots,N_t-1$, the corresponding  candidate for the  optimal strategy is $u^*_n=\widetilde{u}^*(n,X^{P,u^*}_n)$
	with the
	optimal decision rule given by
	
	$$\widetilde{u}^*(n,x)=\argmin_{\nu \in  \mathcal{U}}\Big\{\Psi(x,\nu)+ 
	\E_{n,x} \big[V(n+1,\mathcal{T}(n,x,\nu,\Ec_{n+1}))\big]\Big\}.$$

	The dynamic programming equation \eqref{DPE} can be solved using the following backward recursion algorithm \eqref{algBR} which starts at the terminal time $N_t$  and recursively computes at each time step the value function and the associated  optimal decision rule. Note  that for known $x$  and $\nu$, the conditional expectation $\E_{n,x}$ becomes an unconditional expectation with respect to the random vector  $\Ec_{n+1}$.

	\begin{algorithm}
		\caption{Backward recursion algorithm}\label{algBR}
			\vspace{.3cm}	
			\KwResult{ Find the value function $V $ and the optimal decision rule  $\widetilde{u}^*$}
			\vspace{.3cm}
			\KwSty{	\textbf{Step 1:}} Compute for all $x \in \mathcal{X}$,
			$$V(N_t,x) = \Phi(x)$$
			
			\vspace{.2cm}
			\KwSty{	\textbf{Step 2:} } 
			
			\Indp	\For {$n=N_t- 1,\ldots,1,0 $,}{  compute for all $x \in \mathcal{X}$ \;
				
				$$V(n,x)=\inf_{\nu \in  \mathcal{U}}\Big\{\Psi(x,\nu)+ 
				\E \big[V(n+1,\mathcal{T}(n,x,\nu,\Ec))\big]\Big\}$$\;
				
				Compute a minimizer $u^*_n$  of  the above point-wise  optimization problem, given by\;
				
				$$\widetilde{u}^*(n,x)=\argmin_{\nu \in  \mathcal{U}}\Big\{\Psi(x,\nu)+ 
				\E\big[V(n+1,\mathcal{T}(n,x,\nu,\Ec))\big]\Big\}$$ 
			}
		\end{algorithm}
		
		The implementation of the backward recursion algorithm faces certain challenges. The computation of the conditional expectation in the Bellman equation at each time step $n$ for all state $x \in \mathcal{X}$ is required. Unfortunately, there is no closed-form expression for this conditional expectation and its approximation becomes computationally intractable if the dimension of the state space is high,  the so-called curse of dimensionality. Moreover, the dimension of the control variable has a huge impact on the computational  time of the infimum in the Bellman equation.  For our optimization problem under partial information, the state $X^P$ is a 
		$8$-dimensional vector and control  variable $u$ is a $3$-dimensional vector.
		To address these challenges, in the next section, we discretize the state space and the set of feasible values of  control to form an approximate MDP for a controlled finite-state Markov chain. Subsequently, we will
		approximate  the conditional expectation\\ $\E\big[V(n+1,\mathcal{T}(n,x,\nu,\Ec))\big]$ using the quantization techniques.
		
		\section{Numerical methods}\label{sect3}

		\subsection{Backward recursion  with optimal quantization}
		
		In order to implement the backward recursion algorithm for solving our optimal control problem which lacks a known closed-form solution, we  discretize the state space  and the control space. Furthermore, to mitigate the curse of dimensionality, the  conditional expectation appearing in the Bellman equation will be approximated using quantization techniques for  the Gaussian transition  kernel. Specifically, the Gaussian random vector is replaced by  an appropriated  discrete and finite random random vector. We select  the quantizer such that the approximation error is small and     computational tractability is maintained. 
		
		%
		%
		
		\subsubsection{Discretization of state space and feasible control set   }
		
		The controlled state process contains as  components  the observation process $(Z_n)$, the conditional mean process $(\condmean_n)$  and the conditional covariance process $(\condvar_n)$. For the observation $Z=\left( I^+, R^+,H \right)^\top$, each component takes formally  values in  $[0,N]$, if we neglect the discretization errors. Unlike the compartment $R^+$,  which can actually take values from $0$ to $N$, the compartment $I^+$ and $H$, in application do not   reach the value $N$. Therefore, it is  reasonable to assume maximum values, denoted as  $I^+_{max}, ~H_{max} < N$   for the number of non-detected infected  individuals  and hospitalized individuals, respectively. Analogously, we will consider that the conditional mean components $\condmean^1$ and $\condmean^2$ take values in $[0,\condmean^1_{max}]$  and $[0,\condmean^2_{max}]$ for some suitable $\condmean^1_{max},\condmean^2_{max} < N$ , respectively. For the covariance matrix $\condvar$, to ensure that all possible matrices obtained are positive semi-definite, we discretize  the diagonal entries $\condvar^{1}, \condvar^{2}$ and the correlation  coefficient $\rho=\dfrac{\condvar^{12}}{\sqrt{\condvar^{1} \condvar^{2}}}$  instead of the non-diagonal entry $\condvar^{12}$.  Hence, $\rho$ takes values in $[-1,1]$ and we assume $\condvar^{1} \in [0, \condvar^{1}_{max}]$ and $\condvar^{2} \in [0, \condvar^{2}_{max}]$ for some suitable $\condvar^1_{max},\condvar^2_{max} > 0$, which we calibrated to simulated  path of the filter processes.

		Let $\widetilde{N}_1$ and $\widetilde{N}_2$ be the number of grid points in the $ \condmean^1$ and $\condmean^2$ direction  respectively, let  $0=\widetilde{\condmean}^1_1<  \ldots< \widetilde{\condmean}^1_{\widetilde{N}_1}= \condmean^1_{max}$,  $0=\widetilde{\condmean}^2_1<  \ldots< \widetilde{\condmean}^2_{\widetilde{N}_2}= \condmean^2_{max}$ be finitely many grid points in the  $ \condmean^1$ and $\condmean^2$ direction  respectively, and  let $\widetilde{N}_3,~ \widetilde{N}_4~ \text{and}~ \widetilde{N}_5$ be the number of discretization grid points in the direction $\condvar^{1},~\condvar^{2}, ~ \text{and }  ~ \rho$ respectively, in addition, let  $0=\widetilde{\condvar}^{1}_1<  \ldots< \widetilde{\condvar}^{1}_{\widetilde{N}_3}= \condvar^{1}_{max}$,  $0=\widetilde{\condvar}^{2}_1<  \ldots< \widetilde{\condvar}^{2}_{\widetilde{N}_4}= \condvar^{2}_{max}$ and $ \widetilde{\rho}_1<\ldots< \widetilde{\rho}_{\widetilde{N}_5}$  be finitely many grid points in the direction
		$\condvar^{1},~\condvar^{2}, ~ \text{and }  ~ \rho$ respectively. Finally,
		let $\widetilde{N}_6, \widetilde{N}_7,~ \text{and} ~ \widetilde{N}_8$  be the number of discretization grid points in the direction $Z^1,Z^2 ~ \text{and} ~ Z^3$ respectively, let $0= \widetilde{Z}^1_1<  \ldots< \widetilde{Z}^1_{\widetilde{N}_6}= \widetilde{Z}^1_{max}$,  $0=\widetilde{Z}^2_1< \ldots< \widetilde{Z}^2_{\widetilde{N}_7}= Z^2_{max}$ and $0= \widetilde{Z}^3_1<  \ldots< \widetilde{Z}^3_{\widetilde{N}_8}= Z^3_{max}$  be finitely many grid points in the direction $Z^1,Z^2 ~ \text{and} ~ Z^3$ respectively. 		
		\noindent Then, the discretized state  space is given by $  \mathcal{X}^D= \mathcal{\condmean}^1\times \mathcal{\condmean}^2 \times \mathcal{\condvar}^1 \times \mathcal{\condvar}^2 \times \mathcal{R} \times \mathcal{Z}^1 \times \mathcal{Z}^2 \times \mathcal{Z}^3  $, where 
		
		\smallskip
		
		\begin{tabular}{lclllcll}				
			$\mathcal{\condmean}^j$& $=$&$\left\{  \widetilde\condmean^j_1, \ldots, \widetilde\condmean^j_{\widetilde{N}_j}\right\}$,& j=1,2,&
			$\mathcal{\condvar}^j$& $=$&$\left\{  \widetilde\condvar^j_1, \ldots, \widetilde\condvar^j_{\widetilde{N}_j}\right\}$,& j=1,2,\\
			[1ex]			
			$\mathcal{R}$& $=$&$\left\{  \widetilde\rho_1, \ldots, \widetilde\rho_{\widetilde{N}_5}\right\}$,& &
			$\mathcal{Z}^j$& $=$&$\left\{  \widetilde Z^j_1, \ldots, \widetilde Z^j_{\widetilde{N}_j}\right\}$, & j=1,2,3.\\
		\end{tabular}
		
		\smallskip
		
		Let us introduce the  set of  multi-indices $\mathcal{N}= \prod_{k=1}^{8} \mathcal{N}_k$  where $\mathcal{N}_k =\{1,2,\ldots,\widetilde{N}_k \},$ for  $k=1,\cdots,8$. For a multi-index $\widetilde m=(\widetilde m_1,\widetilde m_2,\ldots,\widetilde m_8) \in \mathcal{N}$, we denote by $x_{\widetilde m}$ the grid point $$(\widetilde \condmean^1_{ \widetilde m_1}, \widetilde \condmean^2_{ \widetilde m_2}, \widetilde \condvar^1_{\widetilde m_3}, \widetilde \condvar^2_{\widetilde m_4}, \widetilde \rho_{\widetilde m_5},\widetilde Z^1_{\widetilde m_6},\widetilde Z^2_{\widetilde m_7}, \widetilde Z^3_{\widetilde m_8}).$$
		
		The set of feasible control values $\mathcal{U}=[0,1]\times \mathbb{R}^2_+$ is also discretized into the discrete set $$\mathcal{U}^D= \mathcal{U}^L \times \mathcal{U}^{T} \times \mathcal{U}^{V},$$
		with $\mathcal{U}^L=\{ \widetilde u^L_1,\ldots,\widetilde u^L_{\widetilde{N}^L} \} \subset [0,1]$, $\mathcal{U}^{T}=\{ \widetilde u^{T}_1,\ldots,\widetilde u^{T}_{\widetilde{N}^{T}} \} \subset \mathbb{R}_+$   and $\mathcal{U}^{V}=\{ \widetilde u^{V}_1,\ldots,\widetilde u^{V}_{\widetilde{N}^{V}} \} \subset  \mathbb{R}_+$.
		
		This state space and feasible control set  discretization allows to approximate the solution of  the given MDP with state space $\mathcal{X}$ by evaluating the value function  and the optimal policy  only on the grid points of the  discrete  state space $ \mathcal{X}^D$, and interpolate between grid points. The resulting MDP becomes more tractable, especially enabling us to explore discrete set of control strategies.
		
		\subsubsection{Optimal quantization}
		Quantization techniques have been applied to reduce the computational burden in several domains among which we could mention signal transmission, clustering with the "k-means" algorithm, numerical integration to  compute the expectation  and many more. Quantization  plays a crucial role due to his practical advantages and computational efficiency.
		Vector quantization is an approach to approximate the distribution of a  continuous random vector $\Ec$ with values in $\mathbb{R}^d$ by a discrete random vector $\Eq$ taking preferably finitely many  values in the  set  $\Gamma=\big\{\xi_1,\ldots,\xi_{\L}\big\}, ~~\xi_l  \in  \mathbb{R}^d, i=1,\cdots,\L $   with associated weights 
		$\left\{w_1,\cdots,w_{\L}\right\}   $.   The possible values $\big\{\xi_1,\ldots,\xi_{\L}\big\}$ of the discrete random vector   are called quantizers or quantization grids.    To have a good approximation, the discrete random vector must be constructed appropriately. The approximation $\Eq$ should therefore satisfy some specified properties. For instance, if we require that all the weights $w_i$  are equal, we obtain the equiprobable quantization. If we construct a  quantization of the vector $\Ec \in \mathbb{R}^d $  by taking the Cartesian product of the quantizers of its $d$ components, we would get the so-called product quantization. For more details on quantization based on  Kantorovich, Wassertein, and Kolmogorov distance between the distribution of $\Ec$ and $\Eq$, we refer to \citet{pflug2011approximations}.  The most  classical criterion in the literature, on which    we focus,  is to assess the goodness of an approximation $\Eq$ through the $L^p$-quantization error. Hence, if the random vector $\Ec$ possesses a moment of order $p\geq 1$, i.e. $\Ec  \in L^p$ then the $L^p$-quantization error of $\Eq$  taking values in $\Gamma$   is defined as 
		
		$$ \Err_{\L,p}(\Ec,\Eq)=\left(\mathbb{E} [\delta(\Ec,\Gamma)^p]\right)^{1/p},$$
		
		\noindent  where the distance between a vector $\xi$ and a set $A$ is given by $\delta (\xi, A)=\min_{a \in A} \vert  \xi -a\vert$ where  $\vert \cdot \vert$   is the Euclidean norm. 			
		
		Often, to a quantizer set $\Gamma=\big\{\xi_1,\cdots,\xi_{\L}\big\} $, one can assign  a partition\\ $\left\{\left(C_i\right)_{1 \leq i\leq \L}\right\}$ of $\mathbb{R}^d$  with 
		
		$$ C_i = \left\{ y \in \mathbb{R}^d; \vert y-\xi_i\vert\leq   \min_{1 \leq l \leq \L} \vert y-\xi_l \vert  \right\} .$$
		
		This partition is  the Voronoi partition  and allows  writing the approximation $\Eq$  as the closest neighbor projection of $\Ec$ into the set $\Gamma$. Weights $w_i$  can then be viewed as the measure of $C_i$ with respect to the distribution of $\Ec$. Hence, if $\mu_{\Ec} $ denotes the distribution of the random vector $\Ec$, then the $L^p$- quantization error can be expressed as

		\[\Err_{\L,p} (\Ec, \Eq)~=~\left(\int_{\mathbb{R}^d} \min_{1 \leq l \leq \L} \vert y-x_l \vert^p \mu_{\Ec}(dy) \right)^{1/p}
		~=~\left( \sum_{l=1}^{\L}\int_{C_l}  \vert y-x_l \vert^p \mu_{\Ec}(dy) \right)^{1/p}.			 
		\]
		
		An $L^p$-optimal  quantization   $\Eq^*$ of level $\L$  is a quantization taking values in  $\Gamma^*=\big\{\xi^*_1,\cdots,\xi^*_{\L}\big\}   $   with associated probabilities 
		$\big\{w^*_1,\cdots,w^*_{\L}\big\}   $  such that 
		
		\begin{equation*}
			\Err_{\L,p} (\Ec, \Eq^*)= \inf_{ \Eq \in \mathcal{L}_{\L}} \Err_{\L,p} (\Ec, \Eq),	 
		\end{equation*}
		
		\noindent where $\mathcal{L}_{\L}$ is the set of discrete random vectors taking at most $\L$ possible values.
		
		The quantization method can be linked in some sense  to the Monte-Carlo approximation which also provides a finite sample realization of the random  variable with equal weight for each realization. However, unlike the Monte Carlo method which is a stochastic method,  because it gives a different  set of possible values each time, the quantization method is a deterministic method. Thanks to this deterministic aspect, the  quantization method allows to achieve a  prescribed level of accuracy by increasing the level $\L$ as we will  see later with  Zador's theorem.   Another advantage of the quantization approach is that the optimal quantization will always provide a higher accuracy compared to the Monte Carlo method sample   with the same level $\L$. However, the main drawback of this method is the computational effort to find an optimal quantizer. Nevertheless, this can be done offline such that the quantizers and the weights are stored before the actual computation.

		Many theoretical questions emerge regarding optimal quantizer, for instance does the optimal quantizer  always exist? If it exists,  does  the optimal quantizer provide a good   approximation of the probability distribution, and can we get an  error estimate? Also,  how  do we compute the optimal quantizer numerically? 
		
		The existence of the optimal quantization was proven by \citet{pages1998space},  \citet[Theorem 4.12]{graf2007foundations}  for $ \mathbb{R}^d$, and in
		\citet{graf2007optimal} for any Banach space.  The quality of the optimal quantization approximation have been addressed in several results  in the literature. Let us  start  by the strictly decreasing property of the optimal quantizer as  the level $\L$ tends to infinity which is part of  \citet[Theorem 4.12]{graf2007foundations}.
		
		\begin{theorem}(The mapping $\L \mapsto \Err_{\L,p} (\Ec, \Eq_{\L}^*) $ is strictly decreasing)
			For every random variable $\Ec$ with distribution $\mu_{\Ec}  \in L^p(\mathbb{R}^d)$ such that $\card(\supp(\mu_{\Ec})) \geq \L $, one has 
			
			$$ \Err_{\L,p} (\Ec, \Eq_{\L}^*) < \Err_{\L-1,p} (\Ec, \Eq_{\L-1}^*),  ~~~ \text{for} ~ \L \geq 2.$$
		\end{theorem}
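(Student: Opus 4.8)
The plan is to exploit that $\Err_{\L,p}(\Ec,\Eq_{\L}^*)$ is an infimum over \emph{all} quantizations taking at most $\L$ values, so it suffices to exhibit a single admissible $\L$-point quantization whose $L^p$-error is \emph{strictly} below $\Err_{\L-1,p}(\Ec,\Eq_{\L-1}^*)$; optimality of $\Eq_{\L}^*$ over $\mathcal{L}_{\L}$ then yields the stated strict inequality. The natural candidate is obtained by \emph{augmenting} an optimal $(\L-1)$-quantizer by one carefully chosen extra point. By the existence result quoted above, let $\Gamma^*_{\L-1}=\{\xi_1^*,\ldots,\xi_{\L-1}^*\}$ be the grid of an $L^p$-optimal quantizer of level $\L-1$, so that $\Err_{\L-1,p}(\Ec,\Eq_{\L-1}^*)^p=\int_{\R^d}\delta(y,\Gamma^*_{\L-1})^p\,\mu_{\Ec}(dy)$.

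First I would locate the extra point. Since $\Gamma^*_{\L-1}$ contains at most $\L-1$ points while $\card(\supp(\mu_{\Ec}))\ge \L$, the set $\supp(\mu_{\Ec})\setminus\Gamma^*_{\L-1}$ is nonempty; pick $s_0$ in it. Because $s_0$ lies outside this finite grid, its distance to the grid is strictly positive, $\rho:=\delta(s_0,\Gamma^*_{\L-1})>0$. Now form the enlarged grid $\Gamma:=\Gamma^*_{\L-1}\cup\{s_0\}$, which has at most $\L$ points, and let $\Eq_\Gamma$ be the associated nearest-neighbour quantization, so that $\Eq_\Gamma\in\mathcal{L}_{\L}$.

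The crucial step is to show that adjoining $s_0$ strictly lowers the error. Pointwise one has $\delta(y,\Gamma)^p=\min\{\delta(y,\Gamma^*_{\L-1})^p,\,|y-s_0|^p\}\le \delta(y,\Gamma^*_{\L-1})^p$, so the integrand of $\Err_{\L-1,p}(\Ec,\Eq_{\L-1}^*)^p-\Err_{\L,p}(\Ec,\Eq_\Gamma)^p$ is nonnegative. To get strict positivity I would restrict to the ball $B:=\{y:|y-s_0|<\rho/3\}$: on $B$ the triangle inequality gives $\delta(y,\Gamma^*_{\L-1})\ge \rho-|y-s_0|>2\rho/3>|y-s_0|$, hence $\delta(y,\Gamma)=|y-s_0|<\delta(y,\Gamma^*_{\L-1})$ strictly. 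Since $s_0\in\supp(\mu_{\Ec})$, every open ball centred at $s_0$ carries positive mass, in particular $\mu_{\Ec}(B)>0$. Integrating the strictly positive integrand over $B$ therefore gives $\Err_{\L,p}(\Ec,\Eq_\Gamma)<\Err_{\L-1,p}(\Ec,\Eq_{\L-1}^*)$, and combining with $\Err_{\L,p}(\Ec,\Eq_{\L}^*)\le \Err_{\L,p}(\Ec,\Eq_\Gamma)$ from optimality completes the proof after taking $p$-th roots.

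The main obstacle is not the infimum/augmentation bookkeeping but the quantitative localisation: one must upgrade the merely a.e.\ inequality $\delta(y,\Gamma)\le\delta(y,\Gamma^*_{\L-1})$ to a \emph{strict} gain on a set of positive measure. This is exactly where the hypothesis $\card(\supp(\mu_{\Ec}))\ge\L$ is used — it guarantees a support point outside the old grid, and the definition of $\supp$ guarantees that a small ball around it has positive mass. Without this cardinality condition the old grid could already exhaust the support, leaving no room for improvement and making the strict inequality false.
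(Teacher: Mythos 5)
Your proof is correct: augmenting an optimal $(\L-1)$-point grid $\Gamma^*_{\L-1}$ by a point $s_0\in\supp(\mu_{\Ec})\setminus\Gamma^*_{\L-1}$ (nonempty precisely by the hypothesis $\card(\supp(\mu_{\Ec}))\geq \L$), obtaining the strict pointwise gain on the ball of radius $\rho/3$ around $s_0$ (which carries positive mass since $s_0$ is a support point, giving a gap of at least $\mu_{\Ec}(B)\bigl((2\rho/3)^p-(\rho/3)^p\bigr)$), and then invoking optimality of $\Eq^*_{\L}$ over $\mathcal{L}_{\L}$ is a complete and sound argument. Note that the paper itself offers no proof of this statement — it is quoted from \citet[Theorem 4.12]{graf2007foundations} — and your grid-augmentation argument is exactly the standard one used in that reference, so your approach coincides with the source the paper relies on.
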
 
		
		This result shows that  as  the quantization level $\L$ increases, the optimal quantization error gets closer to $0$.  In addition, the following Zador's theorems    provide an   upper bound for the optimal quantization error.
		
		\begin{theorem}(Non-asymptotic Zador's theorem)
			Let $\eta >0$, then  for every random vector $\Ec$ with distribution $\mu_{\Ec} \in L^{p+\eta}\left(\mathbb{R}^d\right)$ and for every quantization level $\L$, there exists a constant $C_{d,p,\eta}>0$ which depends only on $d,p ~\text{and}~\eta$  such that 
			\begin{equation*}
				\Err_{\L,p} (\Ec, \Eq_{\L}^*)  \leq  C_{d,p,\eta} \sigma_{p+\eta}(\mu_{\Ec}) \L^{-1/d},
			\end{equation*}

			\noindent where for $ r >0,~ \sigma_r(\mu_{\Ec})= \min_{a \in \mathbb{R}^d} \left[\int_{\mathbb{R}^d}\vert y-a \vert^r \mu_{\Ec} (dy)  \right]^{1/r}$.
			
		\end{theorem}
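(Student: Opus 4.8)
The plan is to bound the optimal error from above by the error of a single, explicitly constructed (and in general suboptimal) $\L$-point quantizer, using that $\Err_{\L,p}(\Ec,\Eq_{\L}^*)\le \Err_{\L,p}(\Ec,\Eq)$ for every $\Eq\in\mathcal{L}_{\L}$. Since both sides of the claimed inequality scale linearly under an affine change of variables $y\mapsto s(y-a)$, and since $\Err_{\L,p}$ and $\sigma_{p+\eta}$ transform identically under it, I would first normalise: translate so that the minimiser $a$ in the definition of $\sigma_{p+\eta}$ is the origin, and rescale so that $\sigma_{p+\eta}(\mu_{\Ec})^{p+\eta}=\int_{\R^d}|y|^{p+\eta}\,\mu_{\Ec}(dy)=1$. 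It then suffices to prove $\Err_{\L,p}(\Ec,\Eq_{\L}^*)\le C_{d,p,\eta}\,\L^{-1/d}$. The elementary building block is the error of a uniform grid on a cube: if $Q$ is a cube of side $\ell$ and one takes the $m^d$ centres of its congruent subcubes of side $\ell/m$, then every $y\in Q$ lies within Euclidean distance $\tfrac{\sqrt d}{2}\,\ell/m$ of the grid, so $\int_Q\delta(y,\Gamma)^p\,\nu(dy)\le \nu(Q)\,\bigl(\tfrac{\sqrt d}{2}\bigr)^p(\ell/m)^p$ for any finite measure $\nu$.

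Next I would decompose $\R^d$ into dyadic shells $S_0=[-1,1]^d$ and $S_k=[-2^k,2^k]^d\setminus[-2^{k-1},2^{k-1}]^d$ for $k\ge 1$, and split the budget $\L$ among them, assigning $\L_k$ points to $S_k$ with $\sum_{k\ge0}\L_k\le\L$. On the outer cube $[-2^k,2^k]^d$ of side $2^{k+1}$ I place a regular grid of the largest possible $m^d\le\L_k$ points and take $\Gamma$ to be the union of all these grids, which carries at most $\L$ points; since adding points only shrinks nearest-neighbour distances, the building-block estimate applies shell by shell without any covering-count bookkeeping. Because $m\ge\tfrac12\L_k^{1/d}$, the contribution of $S_k$ to $\Err_{\L,p}(\Ec,\Eq)^p$ is at most a purely dimensional constant times $\mu_{\Ec}(S_k)\,2^{kp}\,\L_k^{-p/d}$.

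The crux is the tail control and the allocation of the $\L_k$. For $k\ge1$ one has $|y|\ge 2^{k-1}$ on $S_k$, whence $\mu_{\Ec}(S_k)\le 2^{-(k-1)(p+\eta)}\int_{S_k}|y|^{p+\eta}\,\mu_{\Ec}(dy)$; writing $m_k:=\int_{S_k}|y|^{p+\eta}\,\mu_{\Ec}(dy)$, so that $\sum_k m_k\le 1$, the shell-$k$ term is bounded by $C_d\,2^{p+\eta}\,m_k\,2^{-k\eta}\,\L_k^{-p/d}$. I would then choose the geometric allocation $\L_k=(1-\theta)\,\theta^k\,\L$ with $\theta\in(0,1)$, which respects $\sum_k\L_k\le\L$ and turns $2^{-k\eta}\L_k^{-p/d}$ into $((1-\theta)\L)^{-p/d}(2^{-\eta}\theta^{-p/d})^k$. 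The whole point is to pick $\theta$ with $2^{-\eta}\theta^{-p/d}\le 1$, i.e. $\theta\ge 2^{-\eta d/p}$, while keeping $\theta<1$; since $\eta>0$ gives $2^{-\eta d/p}<1$, the choice $\theta=2^{-\eta d/p}$ is admissible and makes $(2^{-\eta}\theta^{-p/d})^k\equiv1$, so $\sum_k m_k(2^{-\eta}\theta^{-p/d})^k\le\sum_k m_k\le1$. Summing over the shells, with the central term $k=0$ handled directly by $\mu_{\Ec}(S_0)\le1$, yields $\Err_{\L,p}(\Ec,\Eq)^p\le C_{d,p,\eta}\,\L^{-p/d}$, where $C_{d,p,\eta}$ collects the dimensional constant, the factor $2^{p+\eta}$ and $(1-2^{-\eta d/p})^{-p/d}$; taking $p$-th roots and undoing the normalisation gives the claim.

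The main obstacle is precisely this joint calibration of the geometric allocation against two competing constraints: the budget $\sum_k\L_k\le\L$ forces $\theta<1$, whereas summability of the weighted error series forces $\theta^{-p/d}\le 2^{\eta}$, and only the strict inequality $\eta>0$ leaves a nonempty admissible interval for $\theta$. The remaining care is bookkeeping — the integer truncation $m^d\le\L_k$, ensuring $\L_k\ge1$ up to the point where the budget is exhausted, and verifying that every constant entering the estimate depends on $d,p,\eta$ alone and not on $\mu_{\Ec}$. For $\eta=0$ the two constraints collide and the argument degrades to the same rate up to a logarithmic correction, which is exactly why the slightly stronger moment of order $p+\eta$ is imposed.
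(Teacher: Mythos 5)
The paper itself does not prove this theorem: it quotes it from \citet[Theorem 5.2]{pages2018numerical} (the extended Pierce lemma), so there is no internal proof to compare yours with. Judged on its own merits, your strategy --- dominate the optimal error by an explicit competitor built from uniform grids on dyadic shells, allocate the budget geometrically as $L_k=(1-\theta)\theta^k \L$, and calibrate $\theta=2^{-\eta d/p}$ so that the Markov-type factor $2^{-k\eta}$ extracted from the $(p+\eta)$-moment exactly cancels $\theta^{-kp/d}$ --- is the standard constructive route to this bound. The normalisation step, the cube estimate, the tail bound $\mu_{\Ec}(S_k)\le 2^{-(k-1)(p+\eta)}m_k$, the choice of $\theta$, and the final summation are all correct, and you correctly identify that $\eta>0$ is what opens the admissible window for $\theta$.

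There is, however, one step that as written fails, and it is more than the ``integer truncation bookkeeping'' you defer. Since $\sum_{k\ge 0}L_k\le \L<\infty$, only finitely many shells can receive a point: as soon as $L_k<1$ you get $m=\lfloor L_k^{1/d}\rfloor=0$, the inequality $m\ge\tfrac12 L_k^{1/d}$ is false, and shell $S_k$ carries no grid at all. For all $k$ beyond the last index $k^*$ with $L_{k^*}\ge1$, your shell-by-shell estimate is vacuous: the nearest codepoint to $y\in S_k$ lies in an inner shell, at distance of order $2^k$ rather than $2^{k+1}/m$, and the remark that ``adding points only shrinks nearest-neighbour distances'' cannot help where no points are placed. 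The repair is short but indispensable, and it uses the extra moment a second time: place one codepoint at the origin, so that $\delta(y,\Gamma)\le|y|$, and bound the unserved tail by
\begin{equation*}
\sum_{k>k^*}\int_{S_k}|y|^p\,\mu_{\Ec}(dy)\;\le\;\sum_{k>k^*}2^{-(k-1)\eta}\,m_k\;\le\;2^{-k^*\eta}\;=\;\bigl(\theta^{k^*}\bigr)^{p/d}\;\le\;2^{\eta}\,\bigl((1-\theta)\L\bigr)^{-p/d},
\end{equation*}
where the last inequality follows from $L_{k^*+1}<1$, i.e.\ $\theta^{k^*}<\theta^{-1}\bigl((1-\theta)\L\bigr)^{-1}$. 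This is exactly the rate you obtain on the served shells, at the cost of a single extra codepoint and a constant depending only on $d,p,\eta$. With this addition (and the trivial edge case $(1-\theta)\L<1$, where the one-point quantizer at the origin already achieves a constant bound that can be absorbed into $C_{d,p,\eta}$), your proof is complete; without it, the construction controls the quantization error on no more than a bounded region, while the statement must hold for measures with arbitrarily heavy (but $L^{p+\eta}$) tails.
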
 
		
		This non-asymptotic Zador's theorem which was proven by \citet[Theorem 5.2]{pages2018numerical} shows that the optimal quantization error is of order $\L^{-1/d}$. Moreover, as the  level $\L$ approaches  infinity,  the second Zador's theorem provides an upper bound for the quantization error (see \citet{graf2007foundations}).
		
		\begin{theorem}(Zador's theorem)
			Let  $\Ec$ be  random vector with distribution $\mu_{\Ec} $ which has the following Lebesgue decomposition with respect to the Lebesgue measure $\lambda_d$,  $\mu_{\Ec}=\mu_a+\mu_s=h \lambda_d+ \mu_s,$ where $\mu_a$ is the absolutely continuous part with density $h$,  and $\mu_s$ is the singular part of $\mu_{\Ec}$. Let $\eta >0$ such that 
			$\mu_{\Ec} \in L^{p+\eta}\left(\mathbb{R}^d\right)$. Then,  there exists a constant $C_{d,p} >0$ which only  depends  on $d ~\text{and}~p$  such that 
			\begin{equation*}
				\lim_{\L \to +\infty}  \L^{1/d}	\Err_{\L,p} (\Ec, \Eq_{\L}^*)  \leq  C_{d,p} \left[\int_{\mathbb{R}^d}h^{\frac{d}{d+p}} d \lambda_d  \right]^{\frac{1}{p}+\frac{1}{d}}.
			\end{equation*}
			
		\end{theorem}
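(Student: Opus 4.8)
The plan is to follow the classical constructive approach of \citet{graf2007foundations}. Since $\Err_{\L,p}(\Ec,\Eq^*_{\L})$ is the \emph{infimum} of the error over all level-$\L$ quantizers, it suffices to exhibit, for each $\L$, a concrete (generally suboptimal) quantizer whose $L^p$-error satisfies the stated asymptotics; this immediately controls $\limsup_{\L}\L^{1/d}\Err_{\L,p}$ from above. I would therefore build such quantizers by a blocking argument and optimize the allocation of points across blocks.

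\textbf{Step 1 (the cube constant).} First I would treat the uniform distribution $U$ on the unit cube $[0,1]^d$ and show that $Q_{d,p}:=\lim_{\L}\L^{p/d}\,\Err^p_{\L,p}(U,\Eq^*_{\L})$ exists in $(0,\infty)$. This rests on a sub/superadditivity argument: partitioning $[0,1]^d$ into $k^d$ congruent subcubes, quantizing each by a rescaled copy, and recombining shows that the sequence $\L\mapsto\L^{p/d}\Err^p_{\L,p}(U)$ converges along $\L=k^d$, and a monotonicity/interpolation estimate upgrades this to a genuine limit. This is the technical heart, and the step I expect to be the main obstacle, since establishing the precise (super- and sub-) additivity of the quantization functional under subdivision is delicate.

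\textbf{Step 2 (reduction to a bounded density with compact support).} Using the moment hypothesis $\mu_{\Ec}\in L^{p+\eta}$, I would truncate to a large cube $[-R,R]^d$ and bound the tail contribution to the $L^p$-error by H\"older against the $(p+\eta)$-moment, showing it is negligible relative to $\L^{-1/d}$ as $R\to\infty$. On the retained piece I approximate $h$ by simple functions that are constant on a fine grid of subcubes $(C_j)_{j}$ of side $s$, controlling the approximation in the $L^{d/(d+p)}$-norm so that the limiting constant is not affected.

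\textbf{Step 3 (allocation and H\"older).} On this partition, with $h_j$ the density value on $C_j$ and mass $m_j\approx h_j s^d$, I place $\L_j$ quantizers inside $C_j$. By Step 1 and affine scaling, the aggregate error obeys $\Err^p\lesssim Q_{d,p}\sum_j h_j\,s^{d+p}\,\L_j^{-p/d}$. Minimizing $\sum_j a_j\L_j^{-p/d}$ with $a_j=h_j s^{d+p}$ subject to $\sum_j\L_j\le\L$ is a constrained optimization solved by the Lagrange/H\"older allocation $\L_j\propto a_j^{d/(d+p)}$, giving the optimal value
\begin{equation*}
\Big(\sum_j h_j^{\,d/(d+p)}\,s^d\Big)^{(d+p)/d}\,\L^{-p/d}.
\end{equation*}
As the mesh refines, the Riemann sum $\sum_j h_j^{\,d/(d+p)} s^d$ converges to $\int_{\R^d}h^{\,d/(d+p)}\,d\lambda_d$; since $(d+p)/(dp)=\tfrac1p+\tfrac1d$, taking $p$-th roots yields exactly $\L^{1/d}\Err_{\L,p}\le C_{d,p}\big(\int_{\R^d}h^{\,d/(d+p)}\,d\lambda_d\big)^{1/p+1/d}$ with $C_{d,p}=Q_{d,p}^{1/p}$.

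\textbf{Step 4 (singular part).} Finally I would show that $\mu_s$ does not influence the bound: its support is $\lambda_d$-null, so it can be covered by a controlled family of small cubes to which only a vanishing fraction of the $\L$ points is allocated; this contributes a lower-order term to $\Err^p$, and the asymptotic constant remains governed entirely by the absolutely continuous density $h$. Combining Steps 1--4 gives the claimed inequality.
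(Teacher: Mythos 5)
The paper does not actually prove this theorem: it is quoted as a known result with a pointer to \citet{graf2007foundations}, so there is no internal proof to compare against. Your outline reconstructs exactly the argument of that cited reference, and its structure is sound: the existence of the cube constant $Q_{d,p}$ for the uniform law via self-similar subdivision, reduction to a compactly supported simple density, the Lagrange/H\"older point-allocation $\L_j\propto a_j^{d/(d+p)}$ (your Step~3 computation is correct, including the identification $\tfrac{d+p}{dp}=\tfrac1p+\tfrac1d$ and the Riemann-sum passage to $\int h^{d/(d+p)}\,d\lambda_d$), and the vanishing contribution of the singular part via coverings of arbitrarily small Lebesgue measure. Two caveats are worth recording. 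First, your construction bounds $\limsup_{\L}\L^{1/d}\Err_{\L,p}$, which suffices for the stated inequality but not for the existence of the limit that the statement (and the full Zador theorem, which is in fact an equality) asserts; establishing the matching lower bound is a separate argument. Second, the genuinely hard content remains at sketch level precisely where you flag it: Step~1 (the sub/superadditivity upgrade to a true limit) and Step~2, where a fixed truncation plus H\"older is not quite enough --- the standard treatment (Pierce-type estimates) must allocate a portion of the codebook to dyadic shells in the tail, and this is exactly where the hypothesis $\mu_{\Ec}\in L^{p+\eta}$ with $\eta>0$ is consumed. As a blueprint the proposal is faithful to the proof the paper implicitly relies on; as a self-contained proof it still defers its two deepest steps.
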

		
		After these theoretical results on the convergence of the quantization error to zero, we can turn to the practical question of computing the optimal quantizer.
		This issue is addressed in the literature  for the case $p=2$, and  the $L^2$-optimal  quantizer  is called the optimal quadratic quantizer. Many algorithms based on iterative methods have been employed to  compute optimal quantizers. These algorithms exploit either the differentiability of the square of the quadratic  quantization error  and design a  zero search algorithm, or the stationarity property of the  quadratic  optimal quantizer described below.
		
		\begin{theorem}
			Let $\Ec$ be  a random vector with distribution  $ \mu_{\Ec} \in L^2(\R^d)$ such that\\ $ \card(\supp(\mu_{\Ec})) \geq \L $. If the norm on $\R^d$ is the Euclidean norm, then
			any quadratic optimal quantizer $\xi^*= (\xi^*_ 1, \ldots, \xi^*_{\L})$ of level $\L$ is stationary in the sense
			\begin{equation*}
				\E \left[\Ec \vert \Eq^*\right]= \Eq^*,
			\end{equation*}
			with this equality valid for every Voronoi partition.
		\end{theorem}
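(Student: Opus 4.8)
The plan is to reduce the statement to a first-order optimality condition for the squared quantization error and then read off the barycenter (conditional-expectation) identity cell by cell. I would write the squared $L^2$-quantization error as a function of the grid, $G(\xi_1,\ldots,\xi_{\L}) = \E\big[\min_{1\le l\le \L}|\Ec-\xi_l|^2\big] = \sum_{l=1}^{\L}\int_{C_l}|y-\xi_l|^2\,\mu_{\Ec}(dy)$, where $(C_l)_{1\le l\le \L}$ is any Voronoi partition attached to the grid $(\xi_1,\ldots,\xi_{\L})$; by definition $\Err_{\L,2}(\Ec,\Eq)^2=G(\xi)$ for the quantizer $\Eq$ induced by $\xi$. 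The cited existence result guarantees a minimizer $\xi^*=(\xi^*_1,\ldots,\xi^*_{\L})$, and since $\Eq^*$ is the nearest-neighbour projection of $\Ec$ onto $\{\xi^*_1,\ldots,\xi^*_{\L}\}$, the relation $\E[\Ec\mid\Eq^*]=\Eq^*$ is equivalent to the family of barycenter identities $\xi^*_l=\E[\Ec\mid\Ec\in C_l]$, i.e. $\int_{C_l}(y-\xi^*_l)\,\mu_{\Ec}(dy)=0$, which is the quantity I would aim to establish.

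The heart of the argument is a one-parameter perturbation that sidesteps any discussion of the global differentiability of $G$. Fix a Voronoi partition $(C_l)$ of the optimal grid, an index $l$, and a direction $v\in\R^d$, and move only the $l$-th point, $\xi^*_l\mapsto\xi^*_l+tv$. Since the nearest-neighbour assignment is optimal for the perturbed grid, freezing instead the partition $(C_l)$ yields the upper bound $G(\ldots,\xi^*_l+tv,\ldots)\le\phi(t)$, where $\phi(t)=\text{const}+\int_{C_l}|y-\xi^*_l-tv|^2\,\mu_{\Ec}(dy)$ is a smooth quadratic in $t$. At $t=0$ the frozen partition is itself a nearest-neighbour assignment, so $\phi(0)=G(\xi^*)$; moreover, because the competing distances coincide on the Voronoi boundaries, this equality holds whatever admissible Voronoi partition is chosen, which is exactly what produces the conclusion ``valid for every Voronoi partition''. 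Optimality of $\xi^*$ now gives $\phi(0)=G(\xi^*)\le G(\ldots,\xi^*_l+tv,\ldots)\le\phi(t)$ for all $t$, so $\phi$ is minimal at $t=0$ and hence $\phi'(0)=-2\int_{C_l}v\cdot(y-\xi^*_l)\,\mu_{\Ec}(dy)=0$. As $v$ is arbitrary, $\int_{C_l}(y-\xi^*_l)\,\mu_{\Ec}(dy)=0$.

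The remaining point, which I expect to be the main obstacle, is to justify dividing by $\mu_{\Ec}(C_l)$, i.e. to show that every optimal Voronoi cell carries positive mass; this is precisely where the hypothesis $\card(\supp(\mu_{\Ec}))\ge\L$ enters. I would argue by contradiction: if $\mu_{\Ec}(C_l)=0$ for some $l$, then deleting $\xi^*_l$ leaves a grid of at most $\L-1$ distinct points with the same value of $G$; since $\supp(\mu_{\Ec})$ has at least $\L$ points, some support point $s$ differs from all remaining quantizers and thus lies at a positive distance from them, so inserting a new point near $s$ strictly lowers $G$ (every neighbourhood of $s$ carries positive $\mu_{\Ec}$-mass), contradicting optimality. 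This simultaneously shows that $\xi^*$ consists of $\L$ pairwise distinct points and that $\mu_{\Ec}(C_l)>0$ for each $l$. Combining this with the perturbation identity gives $\xi^*_l=\E[\Ec\mid\Ec\in C_l]$; since for the fixed Voronoi partition one has $\{\Eq^*=\xi^*_l\}=C_l$, the conditional expectation $\E[\Ec\mid\Eq^*]$ equals $\xi^*_l$ on $\{\Eq^*=\xi^*_l\}$, that is $\E[\Ec\mid\Eq^*]=\Eq^*$, which completes the proof.
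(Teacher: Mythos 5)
Your proof is correct, but it cannot be compared line by line with the paper's, for the simple reason that the paper offers no proof of this theorem at all: it only refers the reader to \citet[Proposition 5.1]{pages2018numerical}. Your argument is thus a self-contained alternative, and it follows a genuinely different route from the one in the cited reference, which is the Hilbert-space projection argument: writing $\Gamma'$ for the set of values of $\E[\Ec\mid\Eq^*]$ (at most $\L$ of them), one chains
\[
\E\big[\,|\Ec-\Eq^*|^2\big]\;\le\;\E\big[\delta(\Ec,\Gamma')^2\big]\;\le\;\E\big[\,|\Ec-\E[\Ec\mid\Eq^*]|^2\big]\;\le\;\E\big[\,|\Ec-\Eq^*|^2\big],
\]
where the first inequality is optimality of $\Eq^*$ among discrete random vectors taking at most $\L$ values, the second holds because $\E[\Ec\mid\Eq^*]$ is $\Gamma'$-valued, and the third is the $L^2$-projection property of conditional expectation; equality throughout, together with uniqueness of the projection onto $L^2(\sigma(\Eq^*))$, gives $\E[\Ec\mid\Eq^*]=\Eq^*$ a.s. Compared with that, your frozen-partition perturbation plus point-insertion contradiction is longer but buys more: it is elementary (no projection theorem), it exhibits exactly where the hypothesis $\card(\supp(\mu_{\Ec}))\ge\L$ enters (positive mass of every Voronoi cell and pairwise distinctness of the optimal points, facts the projection proof does not deliver and which are needed to read the statement cell by cell as the barycenter identities $\xi^*_l=\E[\Ec\mid\Ec\in C_l]$), and it establishes the identity for an arbitrary choice of Voronoi partition, which is precisely the final clause of the theorem. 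Two small points deserve a sentence each in a polished write-up: the inequality $G(\xi^*)\le G(\ldots,\xi^*_l+tv,\ldots)$ uses that the perturbed grid still has at most $\L$ points and is hence admissible at level $\L$; and the claim that distinctness follows ``simultaneously'' should be spelled out --- if two optimal points coincided, the grid would have at most $\L-1$ distinct values, and your insertion argument applies verbatim to produce a strictly better level-$\L$ quantizer, a contradiction.
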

		For the proof we refer to  \citet[Proposition 5.1]{pages2018numerical}. Classical methods  for computing optimal quantizers  are the Competitive Learning Vector Quantization (CLVQ), the Lloyd I algorithm and  the randomized
		version of Lloyd’s algorithm which is more efficient than Lloyd I for dimensions greater than 3. More details on these algorithms can be found in  \citet{pages2018numerical}. For the standard multivariate normal distribution,  Pages provides a repository  for optimal quantizers, weights and error estimate (see \url{http://www.quantize.maths-fi.com/gaussian_database}). 
		
		\subsubsection{Application to the Bellman equation}
		
		We now want to apply  the optimal quantization method  to the backward recursion algorithm by  approximating  the expectation appearing in the  Bellman equation. More generally, for a random vector $\Ec$  with distribution $\mu_{\Ec} \in L^p(\R^d)$, let $\Eq^*$  be the optimal quantization of level $\L$ of $\Ec$,  for a given function $V$, if  $\Eq^*$  is a good approximation of $\Ec$, then  it is reasonable  to consider as a good approximation of $\E(V(\Ec))$  the following cubature formulas
		
		\begin{equation*}
			\E \left[V(\Eq^*)\right]= \sum_{l=1}^{\L} V(\xi^*_l) \mu_{\Ec}  (C_l).
		\end{equation*}
		
		For this cubature formulas, error  bounds have been provided by  \citet{pages2018numerical}  for certain classes of functions $V$. Especially, if $V$  is a Lipschitz continuous function  with Lipschitz constant $K_V$,   one can show that 
		
		\begin{equation*}
			\E \left[ \vert V(\Ec)- V(\Eq^*) \vert \right] \leq  K_V \Vert \Ec- \Eq^* \Vert _1 \leq K_V \Vert \Ec- \Eq^* \Vert _p, \text{ for all } p \geq 1.
		\end{equation*} 
		
		Furthermore, in the case where  $V$  is differentiable with  a Lipschitz continuous gradient $\nabla V$  with Lipschitz constant $K_{\nabla V}$, we have (see  \citet[Prop 5.2]{pages2018numerical})
		
		\begin{equation*}
			\E \left[ \vert V(\Ec)- V(\Eq^*) \vert \right]  \leq \dfrac{1}{2}K_{ \nabla V} \Vert \Ec- \Eq^* \Vert _2.
		\end{equation*}

		Solving the MDP with discrete state space $\mathcal{X}^D$  using the backward recursion algorithm consists  of computing an approximation of the value  function $V$ and the optimal decision rule $\widetilde{u}^*$  for $n=0,1,\cdots, N_t-1$ and all $x_i \in \mathcal{X}^D$.  For a fixed grid point  $x_i$  and $n\in \{   0,1,\cdots, N_t-1 \}$, given the value function $V(n+1,x)$ for the next time point, and all grid points $x \in \mathcal{X}^D$ (and hence for all $x \in \mathcal{X}$  by interpolation), the backward recursion requires the computation of  $\E_{n,x_i} \big[V(n+1,\mathcal{T} (n,x_i, \nu, \Ec_{n+1}))\big]$,		
		\noindent where $\mathcal{T}$  is the transition operator  and $\Ec_{n+1}$ a multivariate standard normally distributed random variable. The approximation of this conditional expectation using the quantization approach is obtained by replacing  the Gaussian distribution of $\Ec_{n+1}$  by an optimal quantizer $\Eq$   	with values in 
		$\Gamma_{\L}=\{ \xi^*_1, \ldots, \xi^*_{\L} \} $ 
		and associated  probabilities  $\{ w_1, \ldots, w_{\L} \} $. This yields the approximation    
		
		\begin{equation*}
			\mathbb{E} \big[V(n+1,\mathcal{T}(n,x_i,\nu,\Ec_{n+1}) )\big] \approx  \sum_{l=1}^{\L} V(n+1,\mathcal{T}(n,x_i,\nu,\xi^*_l)) w_l.
		\end{equation*}

		On the right-hand side of the approximation, the value function is evaluated by linear interpolation  because  $\mathcal{T}(n,X_n^P,\nu,\xi_l)$ is not necessarily  a grid point in $ \mathcal{X}^D$.  Unfortunately, linear interpolation becomes  time-consuming  for scattered data in high dimension  due to the need for domain  triangulation   to identify the surrounding grid points of a given interpolant. In addition, after locating surrounding grid points, computing  the weight associated to each  of these grid points further adds to the computation time. This makes   the use of a finer grid discretization numerically intractable. To address this issue, one may consider some alternative to linear interpolation for high dimensional problems like  nearest neighbor interpolation  and radial basis function interpolation.  The nearest neighbor interpolation is known to be effective for a refined grid discretization, however, working with a finer grid is impractical due to the exponential computational burden as the state and control dimensions increase.  The radial basis functions (RBFs) interpolation have shown to achieve good accuracy for some high dimensional problems, see 
		\citet{jakobsson2009rational,doi:10.1080/0305215X.2013.765000}. RBFs use kernel functions to interpolate values based on radial distances from data points.
		
		In the implementation of our optimal control problem at hand, we aim to avoid  the linear interpolation in high dimensions as it presents two significant  challenges. First, it is time-consuming   because it requires  the triangularization of the multidimensional state space grid points, polytope  search to locate inquiry points and computation of barycentric coordinates. Second, the degeneracy of some polytope or conditioning issues  may render impossible the computation of some barycentric coordinates.  In order to overcome  these challenges, we consider a parametrization of the value function  that allows us to evaluate the parametrized expression instead of performing linear interpolation.  This idea is  similar to the least-squares Monte Carlo, except that we  use a discretization grid  and  the Monte Carlo simulations are replaced by the quantization.

		\subsection{Backward recursion with  quantization and value function regression}

		Several approaches designed to overcome the curse of dimensionality when solving optimal control problems are based on function approximation and are often referred to as approximate dynamic programming. These include approaches such as value iteration, policy iteration, and least-squares Monte Carlo. Some of these methods, especially the latter, consider a parametrization of the value function as a linear combination of some appropriate ansatz functions, and the coefficients of the parametrization are determined using a least-squares approach. The value function approximation is of the form 	$V(n,\cdot)  \approx \sum_{j=1}^{\M} \theta_j(n) \varphi_j(\cdot) $,  	
		where $(\varphi_j)_{j=1,\ldots,\M}$  is a basis of known  ansatz functions,  and $(\theta_j(n))_{j=1,\ldots,\M}$  the constant coefficients depending only  on  time $n$  to be determined.

		To choose a suitable basis of  ansatz functions, it is important to have some prior  knowledge about the behavior of the value function w.r.t. the components of the state variable. Common choices  for ansatz functions include multivariate polynomial bases, abstract Fourier series and  Gaussian kernel bases. In recent years, approximating the value function using neural network became very popular  and this approach is known as  fitted value iteration.

		Let us assume that we are given  the value function $V(n,x_i)$ at some time step $n$  and for all grid points $x_i, ~i \in \mathcal{I}$. We consider a parametrization of $V(n,\cdot)$  as a linear combination of the following form, 			 
		$V(n,\cdot)  \approx \sum_{j=1}^{\M} \theta_j(n) \varphi_j(\cdot) $ 				
		with known basis ansatz  functions  $(\varphi_j)_{j=1,\ldots,\M}$   and constant coefficients $(\theta_j(n))_{j=1,\ldots,\M}$ to be determined. Thus, since we know  the value function for all grid points $x_i, ~i \in \mathcal{I}$, the optimal  parameters $\theta^*(n) = (\theta^*_j(n))_{j=1,\ldots,\M}$  solve the least squares regression problem

		\begin{eqnarray*}
			\theta^*(n)= \argmin_{\theta } \sum_{i \in \mathcal{I}}\Vert V(n,x_i)- \sum_{j=1}^{\M} \theta_j(n) \varphi_j(x_i)\Vert^2 .
		\end{eqnarray*}
		
		The main difficulty  associated with this method is the selection   of an appropriate basis ansatz function. One heuristic based on  the classical linear quadratic problem,  is to consider that the value function inherits certain  properties of the running  and terminal  cost. Therefore, we assume that our ansatz functions are   piecewise  linear and  quadratic polynomial functions  that are involved  in the running and terminal  cost. The aforementioned  ansatz functions are then  scaled with optimal coefficients  $\theta^* (n)\in \R^{\M}_+$ to obtain the value function approximation.
		
		The complete implementation using  quantization and parametrization can be described as follows. For all time steps $n=N_t-1,\cdots,0$, we first  parametrize the  known  value function of the next time step 
		$V(n+1,\cdot)  \approx \sum_{j=1}^{\M} \theta^*_j(n+1) \varphi_j(\cdot) $		with $\theta^*_j (n+1)\in \R^{\M}_+$   that is  the minimizer of   the corresponding regression problem. 
		
		The value function  $V(n,\cdot)$ at time $n$,  is then approximated  at all grid points $(x_i)_{i \in \mathcal{I}}$ using the Bellman equation \eqref{BE}  where we replace the value function at the next time step $V(n+1,\cdot)$ by the regression ansatz $ \sum_{j=1}^{\M} \theta^*_j(n+1) \varphi_j(\cdot)$. Hence, the calculations details are as follows,

		\begin{align}		
			V(n,x_i)&\approx \inf_{\nu \in  \mathcal{U}}\Big\{\Psi(x_i,\nu)+ 
			\E \big[  \sum_{j=1}^{\M} \theta^*_j(n+1) \varphi_j(\mathcal{T}(n,x_i,\nu,\Ec_{n+1})) \big] \Big\}\\
			&\approx \inf_{\nu \in  \mathcal{U}}\Big\{\Psi(x_i,\nu)+ 
			\sum_{j=1}^{\M} \theta^*_j(n+1) \E \big[ \varphi_j(\mathcal{T}(n,x_i,\nu,\Ec_{n+1}))\big]\Big\}.
		\end{align}

		\begin{algorithm}
			\caption{Backward recursion with quantization and regression algorithm}\label{algBRP}
			\vspace{.3cm}		
			\KwResult{  Find the approximate  value function $\widehat{V}  $ and the approximate  optimal decision rule  $\widehat{u}^*$}
			\vspace{.3cm}
			\KwSty{	\textbf{Step 1:}}		 
			Fix the quantization size $\L$, and compute the quantizers and associated weights,\;
			$\Gamma^*_{\L}= \left(  \xi^*_1, \ldots, \xi^*_{\L} \right);$ 
			$\left( w_1,\ldots, w_{\L} \right) $\;		

			\vspace{.3cm}
			\KwSty{	\textbf{Step 2:}} {Compute for all $x_i, i \in \mathcal{I}$,
				$V(N_t,x_i) = \Phi(x_i)$}
			\vspace{.3cm}
			
			\KwSty{\textbf{Step 3:} }\;
			
			\Indp \For {$n=N_t- 1,\ldots,0 $,}{
				
				Find the parametrization 	$\theta^*(n+1)=(\theta^*_j(n+1))_{j=1,\ldots,\M}$ of $\widehat{V}(n+1,\cdot)$ by solving the  regression problem \;
				
				\begin{eqnarray*}
					\theta^*(n+1) = \argmin_{\theta \in \R^{\M}_+}	\sum_{i \in \mathcal{I}}\Vert \widehat{V}(n+1,x_i)- \sum_{j=1}^{\M} \theta_j(n+1) \varphi_j(x_i)\Vert^2 	.
				\end{eqnarray*}\

				Compute for all $x_i, i \in \mathcal{I}$
				
				$$\widehat{V}	(n,x_i)
				=\inf_{\nu \in  \mathcal{U}}\Big\{\Psi(x_i,\nu)+ 
				\sum_{j=1}^{\M} \theta^*_j(n+1) \sum_{l=1}^{\L} w_l        \varphi_j(\mathcal{T}(n,x_i,\nu,\xi_l)) \Big\}$$\;
				
				and the minimizer $\widehat{u}^{\;*}(n,x_i)$   given by:\;
				
				$$\widehat{u}^{\;*}(n,x_i)
				= \argmin_{\nu \in  \mathcal{U}}\Big\{\Psi(x_i,\nu)+ 
				\sum_{j=1}^{\M} \theta^*_j(n+1) \sum_{l=1}^{\L} w_l        \varphi_j(\mathcal{T}(n,x_i,\nu,\xi_l)) \Big\}.$$
			}
		\end{algorithm}
		
		Moreover, considering the approximation of the distribution of   $\Ec_{n+1}$ by quantization grid   $\Gamma^*_{\L}=\{ \xi^*_1, \ldots, \xi^*_{\L} \} $ 
		with associated  probability weights  $\{ w_1, \ldots, w_{\L} \} $, the unconditional expectation is substituted by a weighted average, and we have the approximation 
		
		\begin{align}		
			V(n,x_i)
			&\approx \inf_{\nu \in  \mathcal{U}}\Big\{\Psi(x_i,\nu)+ 
			\sum_{j=1}^{\M} \theta^*_j(n+1)  \sum_{l=1}^{\L} w_l       \varphi_j(\mathcal{T}(n,x_i,\nu,\xi_l)) \Big\}.
		\end{align}

		We are then left with a deterministic point-wise optimization w.r.t. $\nu \in  \mathcal{U}$ which is simple since we have discretized $ \mathcal{U}$. We therefore derive the approximation of the optimal decision rule as

		\begin{align}		
			\widetilde{u}^*(n,x)
			&\approx \argmin_{\nu \in  \mathcal{U}}\Big\{\Psi(x_i,\nu)+ 
			\sum_{j=1}^{\M} \theta^*_j(n+1) \sum_{l=1}^{\L} w_\ell       \varphi_j(\mathcal{T}(n,x_i,\nu,\xi_l)) \Big\}.
		\end{align}
		
		The entire procedure  is summarized in  Algorithm \eqref{algBRP}.

		\section{Numerical results}\label{sect4}
		
		In this section, we delve into the numerical experiment results of our optimal control problem. The SOCP with a partially observed state process is solved numerically using the backward recursion algorithm with quantization approximation coupled to linear interpolation or value function regression. Our focus is to understand the qualitative behavior of the optimal policy to be implemented and analyze the associated minimum aggregated costs or value function.    First, we will introduce the numerical settings by
		providing a comprehensive overview of the parameters, cost coefficients, and other constants relevant to our control problem.
		Next, a benchmark scenario is constructed from situations where the control variables are not activated (smallest value for each control variable). After that, numerical experiments based on the linear interpolation approach will be performed. In addition, we will investigate a faster  numerical method using regression ansatz functions that are aligned with the insights gained from the results of the linear interpolation approach. 
		

		\subsection{Parameter setting}
		In order to solve the optimal control problem numerically, it is necessary to specify the values of certain parameters, and  state process components bounds. Moreover, the coefficients associated with the different costs must be defined. In this setting, the time horizon $N_t$ is considered to be $120$ days. This corresponds to an  epidemic time frame of approximately four months. Such a temporal time frame allows analyzing the optimal control strategy to be  applied during the early stages of the epidemic. This time horizon can further be extended without any particular numerical challenges when dealing with constant parameters. Regarding model parameters,  the transition intensity between compartments is the inverse of the average sojourn time in the compartment, as explained in \citet{laredo2020statistical}. Accordingly, we consider $\beta=0.3$,   as in   \citet{landsgesell2020spread}  related to the early stage of the Covid-19 epidemic in Germany.  For  recovery rates,     we assign $\gamma^-=0.067$ for asymptomatic individuals, and   $\gamma^+=0.1$   for quarantined individuals. This translates to an average infection duration of 15 days for asymptomatic cases and 10 days for  detected infectious cases. The assumption is that  detected infectious individuals are aware of their condition and can take some actions to improve their health.  Additionally,  for the parameters $\eta^-$, $\eta^+$ and $\gamma^H$, we adopt values similar to those presented  in \citet{charpentier2020covid}. A detailed overview of these parameters is provided in the Table \ref{tabparam}.

		\begin{table} 
			\centering
			\begin{tabular}{|l||l|l|l|l|l|l|}
				\hline
				Parameters& $\beta$ & $\gamma^-$&$\gamma^+$& $\gamma^H$& $\eta^-$& $\eta^+$\\ \hline
				Values&0.25 &0.067&0.1& 0.09& 0.002&0.003\\ \hline
			\end{tabular}
			\vspace{.3cm}
			\caption{Model parameters}
			\label{tabparam}
		\end{table}

		\begin{table}
			\centering
			\begin{tabular}{ll|l|l|l}
				\hline
				Costs coefficients && Values&Thresholds& Values\\ \hline\hline
				Lockdown& $( \overline{a}_L, a_L,b_L)$& $(10000,80,0.8)$ &&\\ [.5ex]
				
				Test & $( \overline{a}_T, a_T,b_T)$& $(2000,1200,80)$& $\overline{x}^{Test}$ &50\\ [.5ex]
				
				Vaccination& $( \overline{a}_V, a_V,b_V)$& $(4000,1500,90)$&	$\overline{x}^{Vacc}$& 40\\ [.5ex]
				
				Hospital & $( \overline{a}_H, a_H,b_H)$& $(1000,2000,100)$& $\overline{x}^H$& 10\\ [.5ex]
				
				Infection $I^-$ & $( \overline{ a}_{I^-}, a_{I^-},b_{I^-})$& $(0,1500,150)$&  $\overline{x}^{I^-}$& 100\\ [.5ex]
				
				Infection $I^+$ & $( \overline{ a}_{I^+}, a_{I^+},b_{I^+})$& $(0,1000,100)$&$\overline{x}^{I^+}$& 150\\ [.5ex]
				
				Terminal cost  & $( \overline{ a}_{N_t,I^-}, a_{N_t,I^-},b_{N_t,I^-})$& $(0,15000,1500)$ &$\overline{x}^{I^-}_{N_t}$& $100$\\ [.5ex]
				& $( \overline{ a}_{N_t,I^+}, a_{N_t,I^+},b_{N_t,I^+})$&$(0,10000,1000)$ &$\overline{x}^{I^+}_{N_t} $& $150$\\ [.5ex]
				& $( \overline{ a}_{N_t,H}, a_{N_t,H},b_{N_t,H})$&$(10000,20000,1000)$ &$\overline{x}^{H}_{N_t}$& $10$\\ [.5ex]
				\hline 
				
			\end{tabular}
			\vspace{.5cm}
			\caption{Costs coefficients and thresholds values}
			\label{coefftresh}
		\end{table}
		
		Once the model parameters have been established, the next step is to assign cost coefficients and thresholds.  Assigning cost coefficients is equivalent to associating weights to each marginal cost. However, this process raises ethical questions since it is not that easy to determine how important economic concerns are, compared to human life. To avoid this issue, the weights considered here are purely for experimental purposes. The Table \ref{coefftresh} provides experimental values for cost coefficients and thresholds. To obtain terminal coefficient values, we simply scale the corresponding running coefficients by a factor of $10$.

		The boundary values of the state space are obtained by means of numerical simulations as described in the following benchmark scenario.
		
		\subsection{Benchmark scenario}
		
		In our benchmark scenario, we examine the epidemic's progression in the absence of any intervention by policymakers. We set the control variables to their respective minimum values. Specifically, the lock-down and vaccination control variables are both set to the natural minimum of $0$, while the testing control variable is assigned a small value, $u_{min}^T=0.001$. This value accounts for individual self-testing that is not financed by decision makers. By doing so, we allow  the conditional means of the hidden compartments to reach their maximum levels. Additionally, varying the control variables enables us to establish bounds for observable compartment, the diagonal entries of the covariance matrix and  the correlation coefficient. 
		
		In Figure \ref{bench}, we present sample trajectories of the observed compartments alongside the conditional means of the hidden compartments. Specifically, in Panel \subref{pa}, we depict the dynamics of all compartment sizes. The epidemic peak occurs approximately 25 days after the initial outbreak, and the estimated number of non-detected infected individuals rapidly declines, nearly vanishing by day 85.  Upon zooming into small compartment sizes, as illustrated in Panel \subref{pb}, the number of detected infected individuals decreases progressively. This is certainly due to the absence of a testing strategy. The hospital compartment size  fluctuates  and in the  first phase  increases up to exceeding the hospital threshold and in a second phase decreases toward  zero. 
		Additionally, estimated values of hidden states are subject to a certain degree of uncertainty quantified by the evolution of the entries of  the covariance matrix associated to hidden states on Panel \subref{pc}. The diagonal entries $\condvar^1$ and $\condvar^2$ grow considerably, before decreasing towards $0$ as the epidemic dynamic stabilizes. The interdependence between the estimates of both hidden states is expressed through the dynamic of the correlation coefficient.  The correlation is first positive as both estimates $\condmean^1$ and $\condmean^2$ are increasing, then it transitions to negative values as soon as $\condmean^1$ starts decreasing and $\condmean^2$ keeps increasing.
		
		Despite utilizing a stochastic model coupled with partial information, we note that the evolution of the epidemic is in accordance with the result in \citet{charpentier2020covid} wherein a purely deterministic ODE system is analyzed. This is not surprising since it is proven that for relative subpopulation sizes, the ODE system is the limiting case, as the total population size tends to infinity, of the associated stochastic differential equation (SDE) model that we have considered.

		\begin{figure}
			\centering
			\begin{subfigure}{.5 \textwidth}
				\centering
				\includegraphics[width=1\linewidth]{/figureOCP/011}
				\caption{ All compartments}
				\label{pa}
			\end{subfigure}%
			\begin{subfigure}{.5 \textwidth}
				\centering
				\includegraphics[width=1 \linewidth]{/figureOCP/1111}
				\caption{Zoom in for small compartments }
				\label{pb}
			\end{subfigure}
			\begin{subfigure}{.5 \textwidth}
				\centering
				\includegraphics[width=1\linewidth]{/figureOCP/111}
				\caption{ Conditional covariance matrix entries}
				\label{pc}
			\end{subfigure}%
			\begin{subfigure}{.5 \textwidth}
				\centering
				\includegraphics[width=1\linewidth]{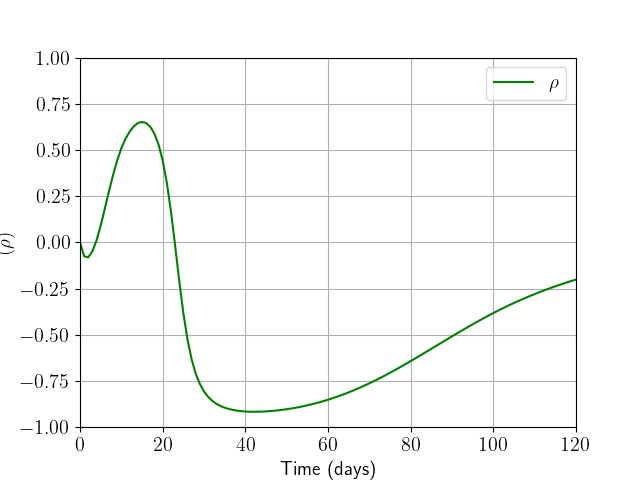}
				\caption{Correlation coefficient}
				\label{pd}
			\end{subfigure}
			\caption{Sample paths of state components  and correlation coefficient in the absence of control measures }
			\label{bench}
		\end{figure}
		
		To assess the impact of control measures, we simulate sample trajectories of the state dynamic for control measures  fixed at moderate values, namely $(u^L,u^T,u^V)=(0.2,0.03,0.015) $.  Figure  \ref{benchb} depicts panels that are similar to those in Figure  \ref{bench}. we observe that as expected, the infection maximum is smaller compare to the case of no control measure. Additionally, more infectious people are detected and can be put in quarantine.

		\begin{figure}
			\centering
			\begin{subfigure}{.5 \textwidth}
				\centering
				\includegraphics[width=1\linewidth]{/figureOCP/012b}
				\caption{ All compartments}
				\label{pab}
			\end{subfigure}%
			\begin{subfigure}{.5 \textwidth}
				\centering
				\includegraphics[width=1 \linewidth]{/figureOCP/1112b}
				\caption{Zoom in for small compartments }
				\label{pbb}
			\end{subfigure}
			\begin{subfigure}{.5 \textwidth}
				\centering
				\includegraphics[width=1\linewidth]{/figureOCP/112b}
				\caption{ Conditional covariance matrix entries}
				\label{pcb}
			\end{subfigure}%
			\begin{subfigure}{.5 \textwidth}
				\centering
				\includegraphics[width=1\linewidth]{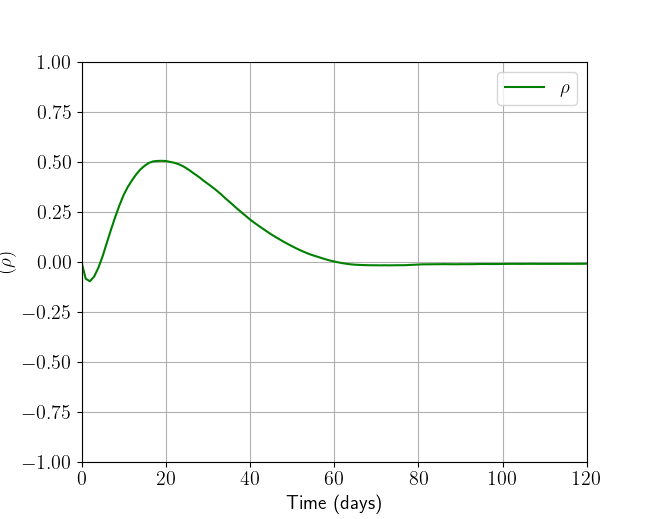}
				\caption{Correlation coefficient}
				\label{pdb}
			\end{subfigure}
			\caption{Sample paths of state components  and correlation coefficient moderate control measures, $(u^L,u^T,u^V)=(0.2,0.03,0.015) $ }
			\label{benchb}
		\end{figure}

		\subsection{Results using state discretization, quantization and linear interpolation}

		The first numerical solution to our optimal control problem employs the backward recursion algorithm, approximating the conditional expectation through quantization and linear interpolation. Due to the computationally demanding aspect of our algorithm, we utilize the values in Table  \ref{disctab}, for the number of  discretization grid points for the state and control space.	
		Even with this coarse discretization grid, the computational load is quite significant. Nevertheless, this grid yields already  informative results for the value function and the optimal decision rule. 
		
		\begin{table}
			\centering
			\begin{tabular}{|ll@{\hspace{1cm}}l|l|}
				\hline
				\textbf{	Discretization}& Range&&Values\\
				\hline \hline
				Time step& & $\Delta t$& 1 day\\
				Time horizon& &$T=N_t \Delta t$&120 days\\	
				Quantization level&& $N_l$& 125\\			
				State ~ $\condmean^1$&$ [1,450]$  & $\widetilde{N}_1$&6 \\
				\phantom{State ~} $\condmean^2$&$   [1,450]$  & $\widetilde{N}_2$&6 \\
				\phantom{State ~} $\condvar^1 $&$  [1,1200]$  & $\widetilde{N}_3$&6 \\
				\phantom{State ~} $\condvar^2 $&$   [1,1200]$  & $\widetilde{N}_4$&4 \\
				\phantom{State ~} $\rho$&$   [-0.5,0.5]$  & $\widetilde{N}_5$&3 \\
				\phantom{State ~} $Z^1$&$   [1,350]$  & $\widetilde{N}_6$&6 \\
				\phantom{State ~} $Z^2$&$   [1,990]$  & $\widetilde{N}_7$&6 \\
				\phantom{State ~} $Z^3 $&$  [1,50]$  & $\widetilde{N}_8$&6 \\
				Control $u^L$&$  [0,1]$  & $\widetilde{N}^L$&6 \\
				
				\phantom{Control} $u^T$&$   [0.001,0.07]$  & $\widetilde{N}^T$&3 \\	
				\phantom{Control} $u^V$&$   [0,0.1]$  & $\widetilde{N}^V$&3 \\
				\hline
			\end{tabular}
			\caption{Table of discretization values}
			\label{disctab}
		\end{table}
		
		\begin{remark}
			To apply the linear interpolation of the value function   in dimension $8$, it is  necessary to perform a Delaunay triangularization of the state space. Furthermore, this is followed by the computation of the location of the triangle in which each interpolant point belongs, as well as the calculation of the barycentric coordinates.   Upon computing the value function with fewer grid points and for the model without the hospital, it was observed that the value function does not exhibit a direct dependence on the correlation coefficient between the two hidden states and the second diagonal entry of the conditional covariance matrix. In other words, when all the other variables are held constant, the value function shows to be  independent on both  $\rho$ and $Q^2$. This allows the curse of dimensionality to be alleviated by performing the linear interpolation in a six-dimensional subspace generated by the remaining state components, namely,  $( \condmean^1, \condmean^2, \condvar^1,  Z^1,Z^2, Z^3).$		
			Another trick that we employed, due to the use of constant parameters, to reduce the computational time was to compute the search of triangles and the computation of barycentric coordinates for all the interpolants offline, prior to the backward recursion. 
			
		\end{remark}
		
		\subsubsection{Value function and optimal decision rule as function of  $(\condmean^1, Z^1)$}
		
		The value function of our optimization problem is a function of eight variables, which presents a significant challenge in terms of visual representation. In order to gain insight into the behavior of this multi-variable function, it is necessary to make projections by setting specific values for certain variables. Since we are interested in an infectious disease, the two most  important  variables are the infection variables:   the detected infected  represented by  $Z^1=I^+$ and the estimated number of undetected   infected $\condmean^1$. Hence, we visualize the value  function and the optimal  decision  rule w.r.t.  $\condmean^1, Z^1$ and   the remaining variable are held constant. The two scenarios  that we consider  are: $(i)$ the case  where all  the other  compartments are almost empty, except the susceptible $S$ that can be recovered by normalization,  and $(ii)$ the case of other  compartments with moderate sizes. 
		
		\paragraph{Small values of  $(  \condmean^2, \condvar^1,\condvar^2, \rho,  Z^2, Z^3)$ }
		
		This configuration may correspond to the early stage of the disease during which the majority of individuals have not yet recovered from the infection. It is therefore possible to represent the projected functions for a sample of time grid points. Typical behavior is observed at time steps  $t=118$ days and $t=100$  days.   In Figures \ref{LI118small}, and \ref{LI100small}, the remaining variables are set to small values as follows: 
		$  \m^2=1, \q^1=1,\q^2=1,\rho=-0.5,  \z^2=1, \z^3=1$. 
		
		\begin{figure}[p]
			\hspace{-2cm}	\includegraphics[width=1.2\linewidth]{./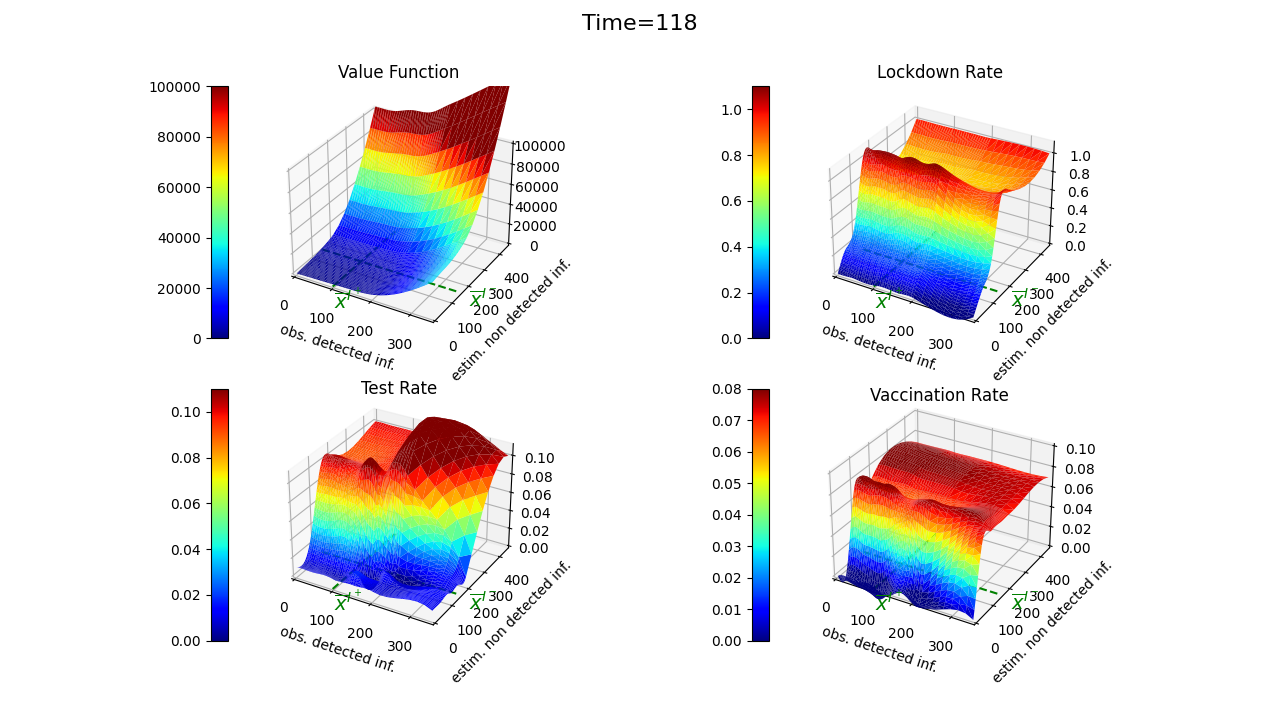}
			
			\caption{ \textbf{Linear interpolation method}:   value function and optimal decision rule   at time $118$ for  small values of  $(  \m^2, \q^1,\q^2, \rho,  \z^2, \z^3)$  }
			\label{LI118small}
		\end{figure}

		\begin{figure}[p]
			\hspace{-2cm}
			\includegraphics[width=1.2\linewidth]{./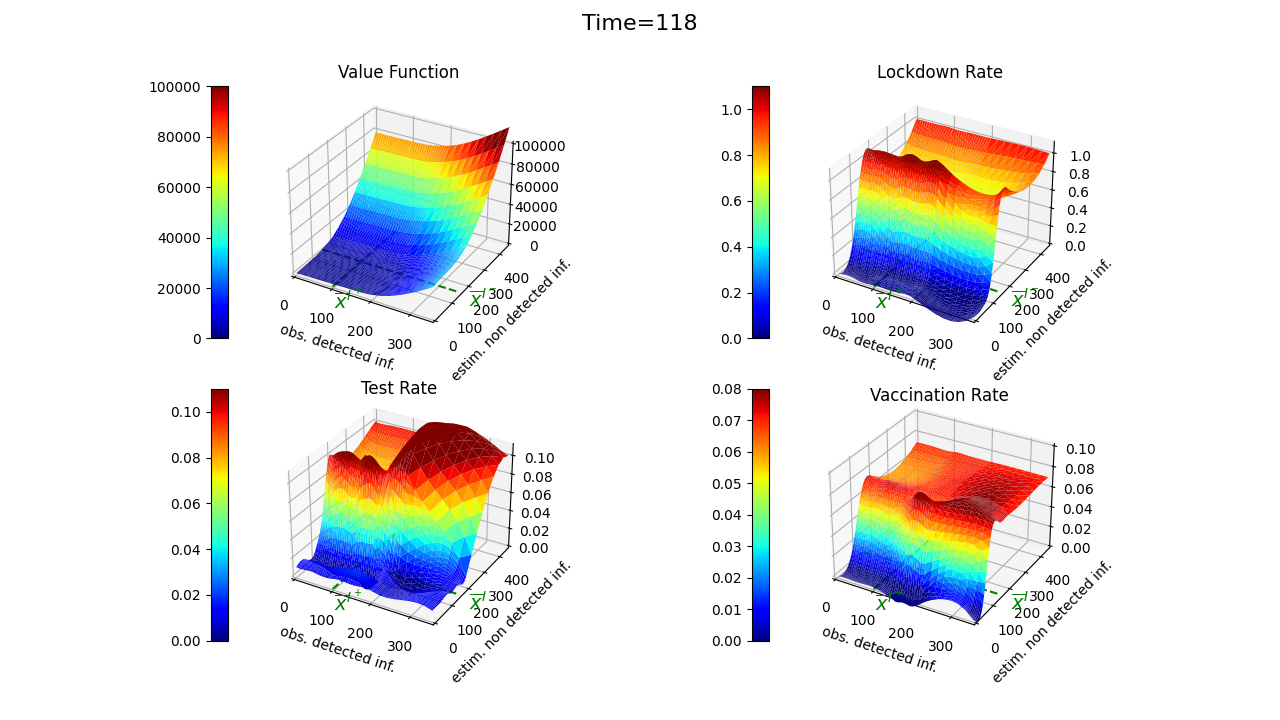}
			
			\caption{  \textbf{Regression method}: value function and optimal decision rule  at time $118$ for  small values of  $(  \m^2, \q^1,\q^2, \rho,  \z^2, \z^3)$  }
			\label{LI118smallfit}
		\end{figure}
		

		\begin{figure}[p]
			\hspace{-2cm}	\includegraphics[width=1.2\linewidth]{./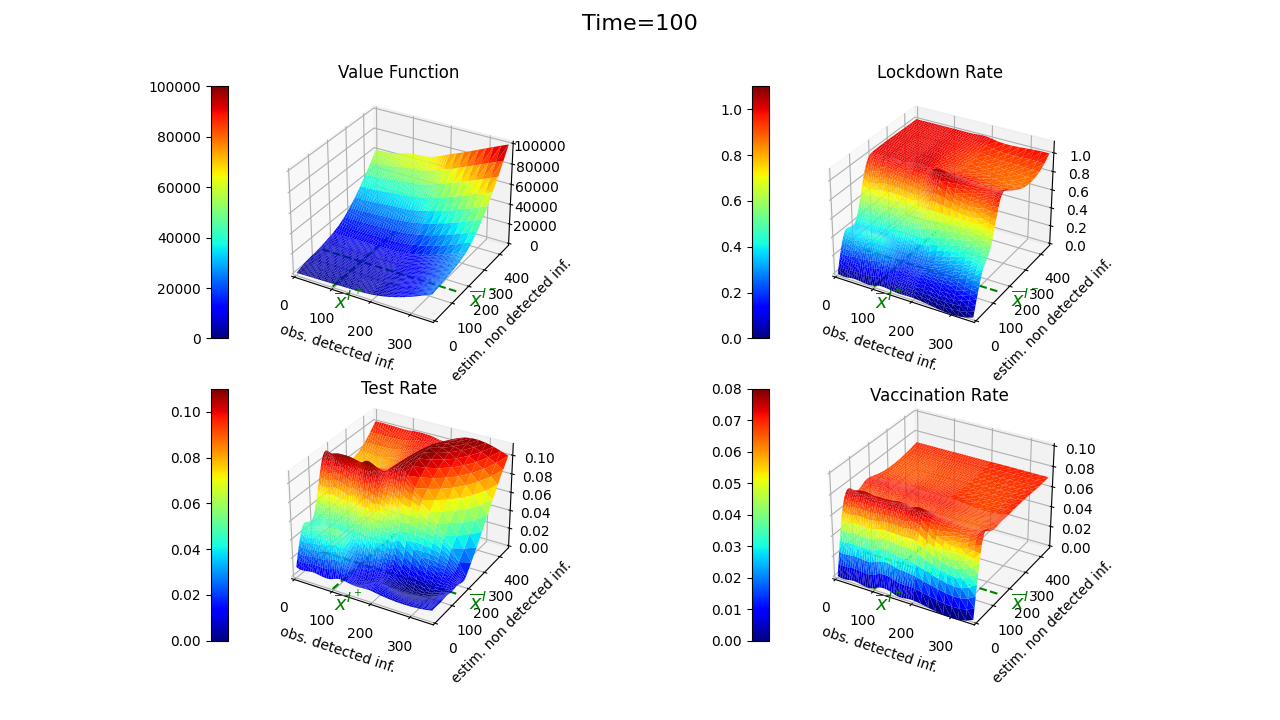}
			\caption{\textbf{Linear interpolation method}: value function and optimal decision rule  at time $100$ for  small values of  $(  \m^2, \q^1,\q^2, \rho,  \z^2, \z^3)$  }
			\label{LI100small}
		\end{figure}

		\begin{figure}[p]
			\hspace{-2cm}
			\includegraphics[width=1.2\linewidth]{./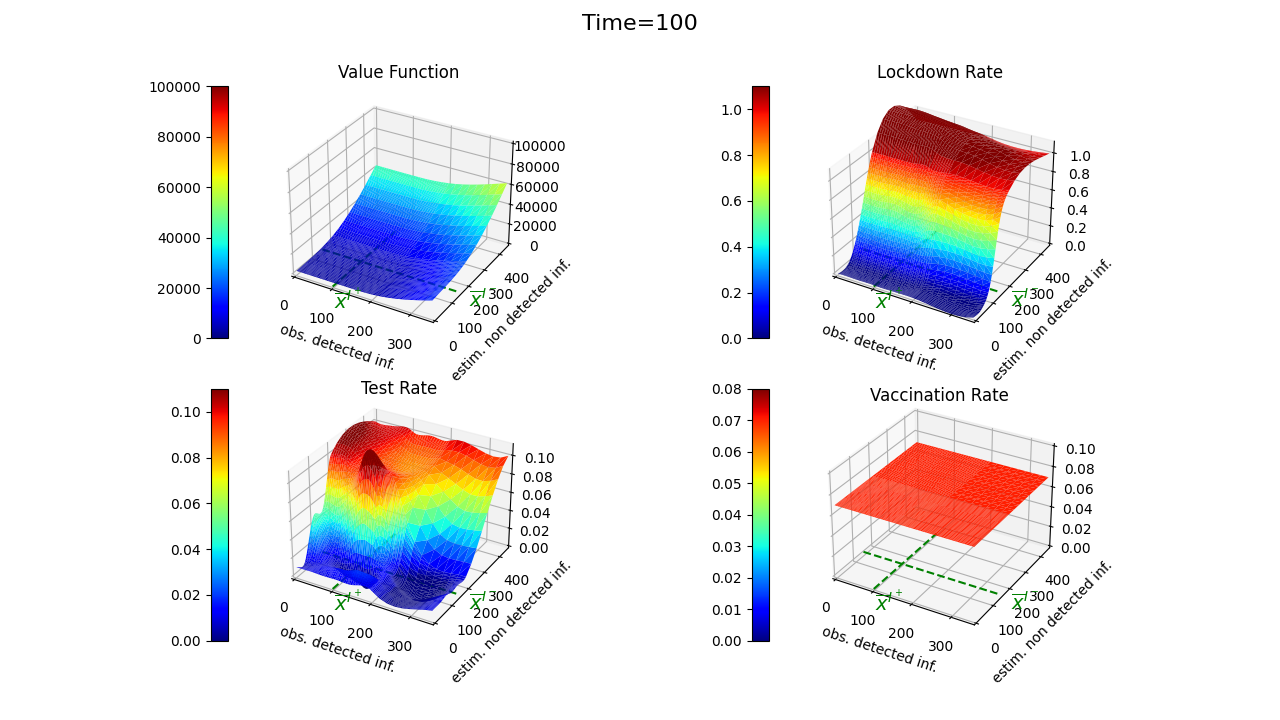}
			\caption{\textbf{Regression method}: value function and optimal decision rule   at time $100$ for  small values of  $(  \m^2, \q^1,\q^2, \rho,  \z^2, \z^3)$  }
			\label{LI100smallfit}
		\end{figure}

		We note that on one hand, the value function as a function of $Z^1$ and $\condmean^1$, inherits the linear and quadratic structure analogous to that observed in the running and terminal costs. The value function initially grows linearly for small values of $Z^1$ and the numbers of infected individuals increase and cross the respective thresholds, the growth becomes quadratic, which is a consequence of the quadratic structure of the penalty function. On the other hand, it is noteworthy that the optimal decision rule remains almost stationary up to 20 days before the time horizon, with only slight changes occurring as the terminal time approaches.  In particular, the rate of lockdown appears to be largely independent of the number of detected infected individuals, with a significant dependence on the estimated number of undetected infectious cases. As a function of $\condmean^1$, the lockdown rate evolves in a progressive manner, shifting from no social distancing when $\condmean^1$ is small to a pronounced lockdown as the threshold 
		$\overline{x}^{I^-}$ is exceeded.  As with the lockdown rate, the test and vaccination rates are primarily contingent on the number of estimated undetected infectious cases, with a lesser dependence on $Z^1=I^+$. In addition,  they both go from small detection and no vaccination rate when $\condmean^1$ is negligible to being applied at maximum rate as $\condmean^1$ approaches and crosses the threshold  $\overline{x}^{I^-}$.  In contrast to the gradual increase in the lock-down and testing rates, the vaccination rate rises rapidly to its maximum value. Consequently, the optimal decision rule is a combination of the three control variables, which are typically active at comparable levels. This indicates that, when necessary, intervention measures tend to combine the three controls to bring down the epidemic.
		This numerical result demonstrates that whenever the three control   measures are available, it is advantageous  to act simultaneously on all of them to mitigate the spread of the disease.

		\paragraph{Moderate values of  $(  \condmean^2, \condvar^1,\condvar^2, \rho,  Z^2, Z^3)$}
		
		To gain further insight into the value function and the optimal decision rule, we consider the remaining variables held fixed at moderate values: $\m^2 = 100$, $\q^1 = 500$, $\q^2 = 500$, $\rho = 0.5$, $\z^2 = 200$, and $Z^3 = 30$. This setup represents the middle stage of the epidemic, with an estimated number of recovered individuals at $300$.
		
		Figures \ref{LI118mod} and \ref{LI100mod} illustrate that the value function behaves similarly to the case with a fixed small compartment above, initially linear and then quadratic as it approaches the thresholds $\overline{x}^{I^-}$ and $\overline{x}^{I^+}$. The optimal decision rule primarily still depends on the estimated number of undetected infected individuals, $\condmean^1$, shifting progressively  from lower to higher control variable rates as $\condmean^1$ approaches and exceeds the threshold $\overline{x}^{I^-}$.
		
		Meanwhile, when both $I^+$ and $\condmean^1$ exceed their respective thresholds, the rates for lockdown and vaccination decline, while the testing rate remains high. This indicates that, in a scenario where a moderate number of individuals have already recovered (being in either compartment $R^+$ or $R^-$) and the sizes of infected compartments $I^-$ and $I^+$ have surpassed their thresholds, it becomes unnecessary to apply lockdown and vaccination measures since almost no one is susceptible anymore. 
		
		\begin{figure}[p]
			\hspace{-2cm}
			\includegraphics[width=1.2\linewidth]{./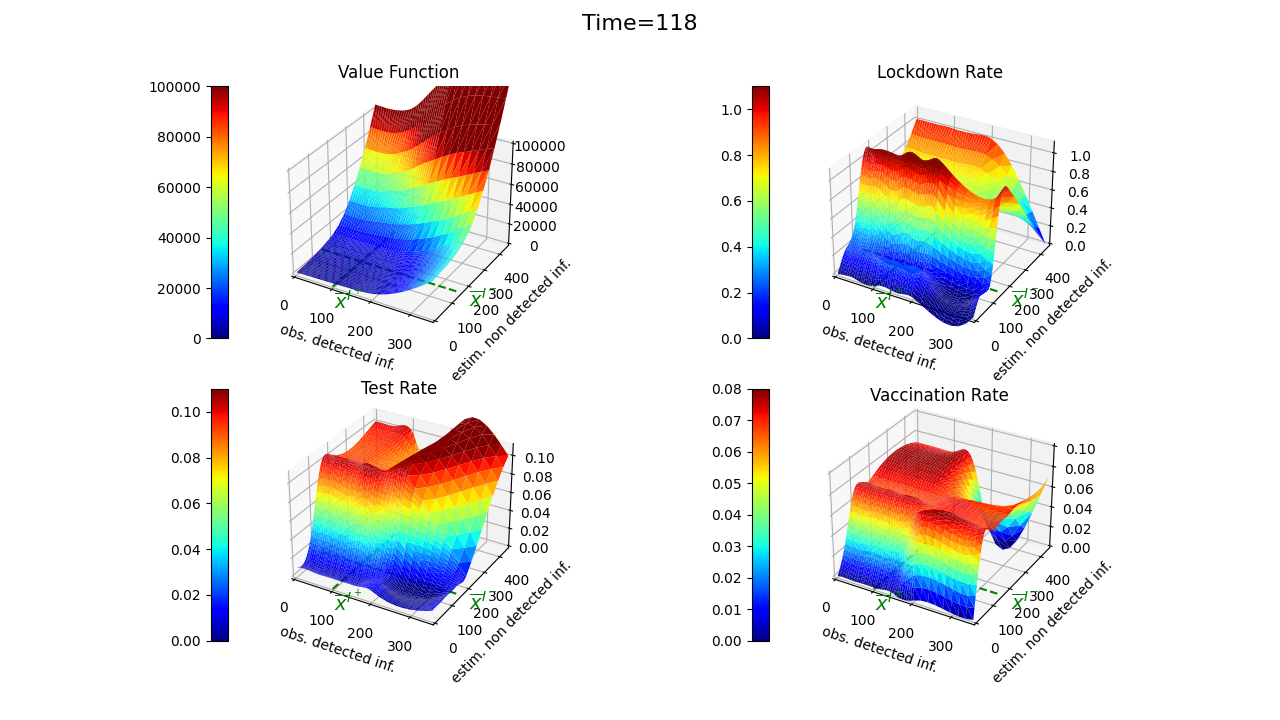}
			\caption{  \textbf{Linear interpolation method}: value function and optimal decision rule at time $118$ for  moderate values of $(  \m^2, \q^1,\q^2, \rho,  \z^2, \z^3)$  }
			\label{LI118mod}
		\end{figure}

		\begin{figure}[p]
			\hspace{-2cm}
			\includegraphics[width=1.2\linewidth]{./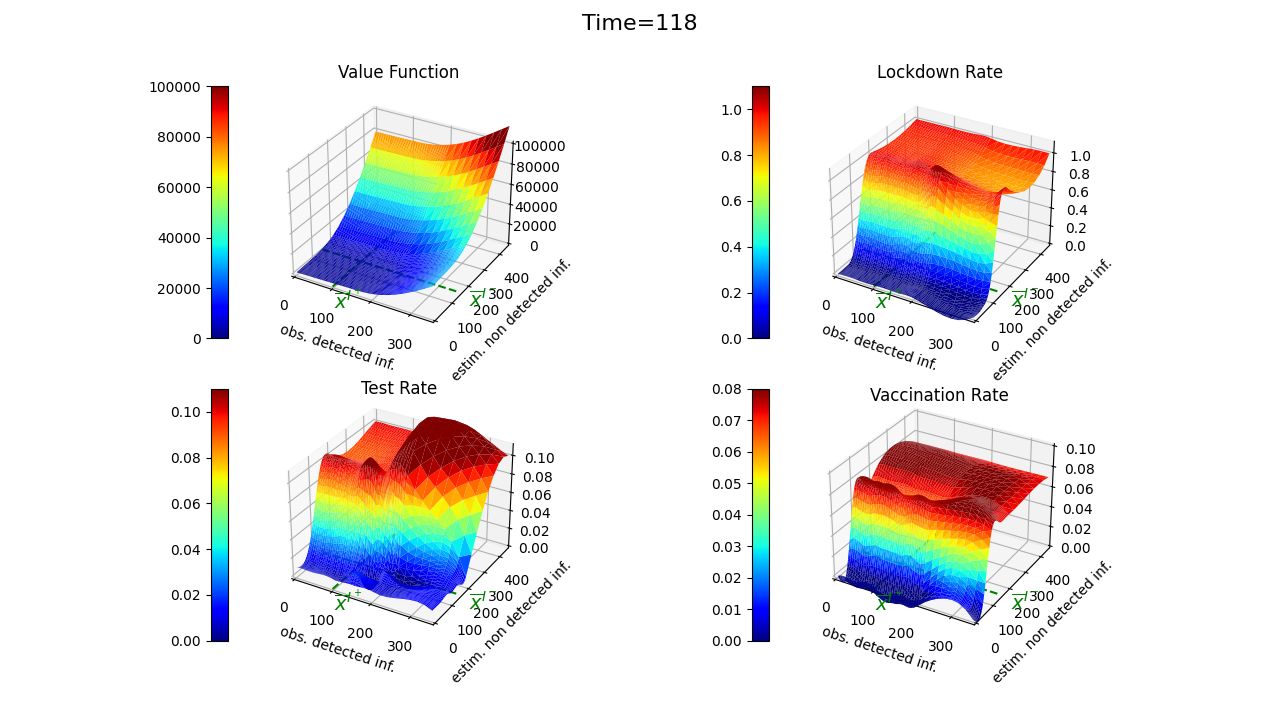}
			\caption{\textbf{Regression method}:  value function and optimal decision rule    at time $118$ for  moderate values of  $(  \m^2, \q^1,\q^2, \rho,  \z^2, \z^3)$  }
			\label{LI118modfit}
		\end{figure}

		\begin{figure}[p]
			\hspace{-2cm}
			\includegraphics[width=1.2\linewidth]{./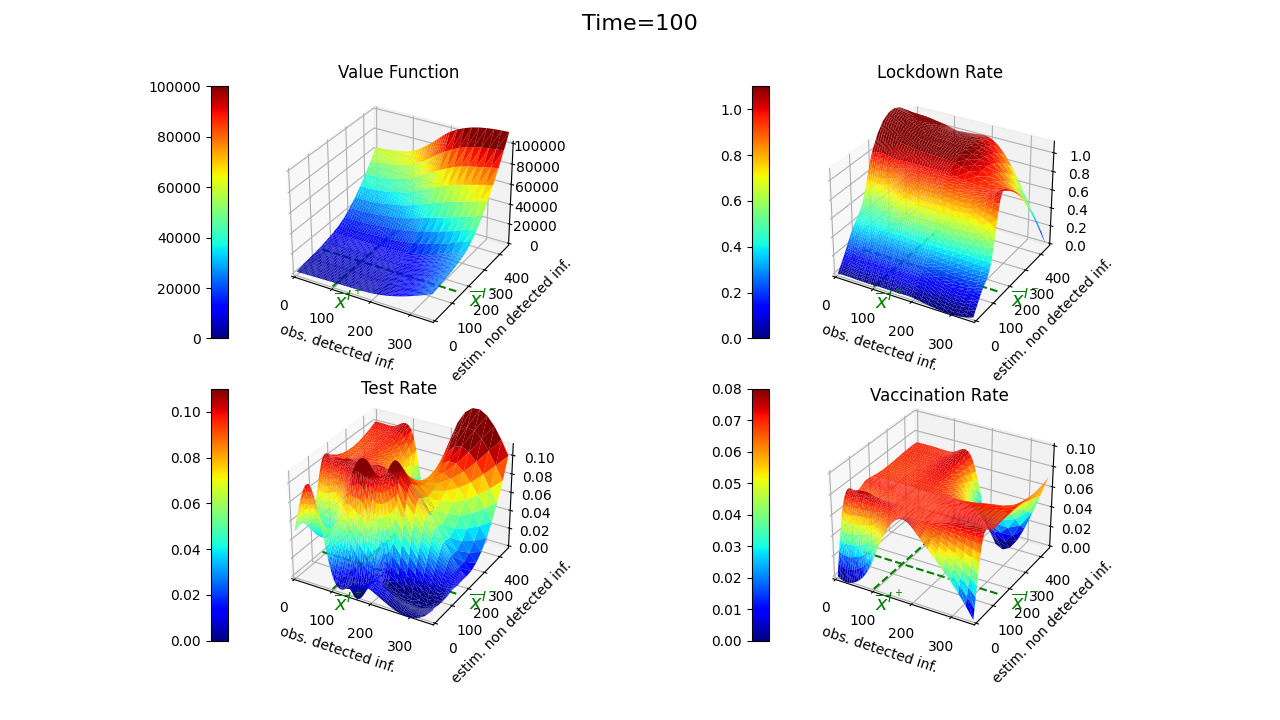}
			\caption{\textbf{Linear interpolation method}:  value function and optimal decision rule   at time $100$ for  moderate values of  $(  \m^2, \q^1,\q^2, \rho,  \z^2, \z^3)$  }
			\label{LI100mod}
		\end{figure}

		\begin{figure}[p]
			\hspace{-2cm}
			\includegraphics[width=1.2\linewidth]{./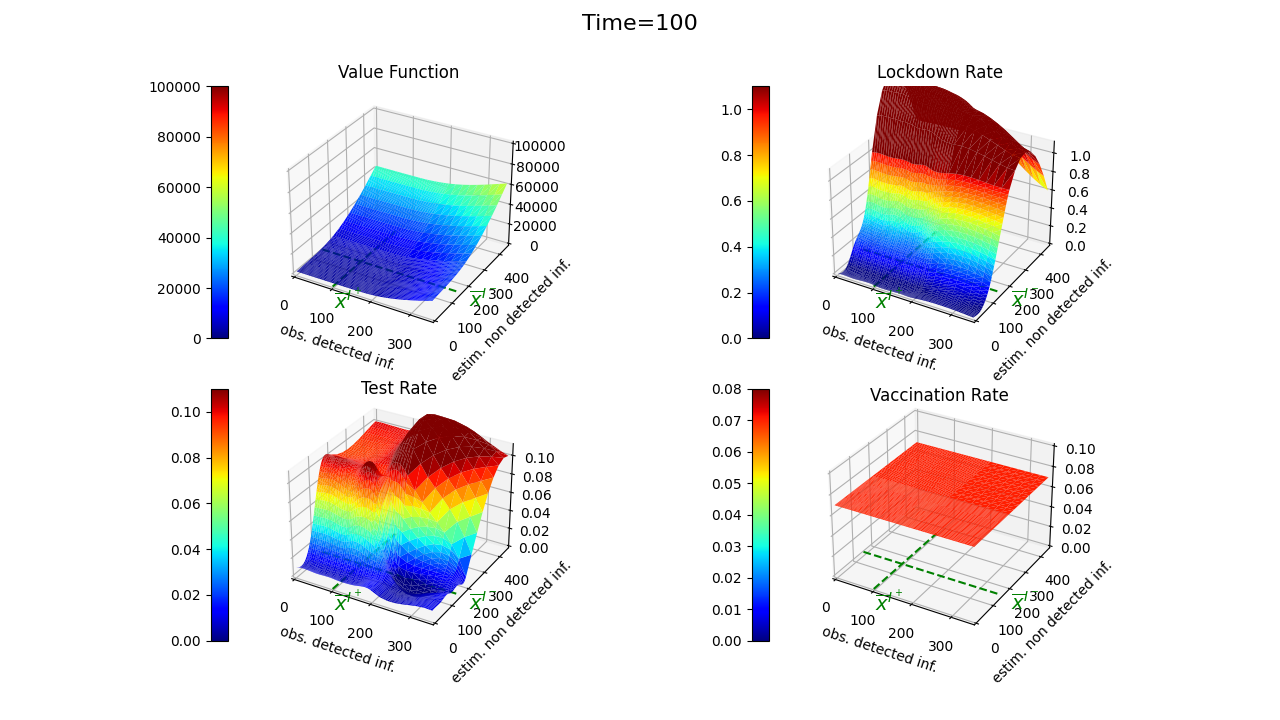}
			\caption{\textbf{Regression method}: value function and optimal decision rule at time $100$ for  moderate values of  $(  \m^2, \q^1,\q^2, \rho,  \z^2, \z^3)$  }
			\label{LI100modfit}
		\end{figure}

		\subsubsection{Optimal path  of the epidemic  state process}
		
		With the knowledge of the optimal decision rule at all grid points of the state space, we can simulate an optimal sample path of the state process using the optimal control of the nearest neighbor projection. Figures \ref{optpath} and \ref{optcontrpath} illustrate the dynamics of sample compartment sizes and the dynamics of the applied control measures.	
		We observe that the infection compartment sizes, $I^+$ and the estimate of $I^-$, are driven quickly toward zero in approximately 25 to 30 days. Meanwhile, the estimates of compartment sizes $S$ and $R^-$ decrease and stabilize once there are no more infected individuals in the population. The hospital compartment size fluctuates slightly but remains low and below the hospital threshold throughout the infection period, unlike the benchmark scenario where the threshold was exceeded.	
		Regarding the optimal control measures, we see that initially, a strong lockdown is applied along with significant levels of testing and vaccination. The lockdown is gradually eased after one week, while testing and vaccination continue. Approximately one week after the end of the lockdown, testing is stopped, and only vaccination continues for approximately two more weeks.   
		
		\begin{figure}
			\centering
			\includegraphics[width=0.8\linewidth]{./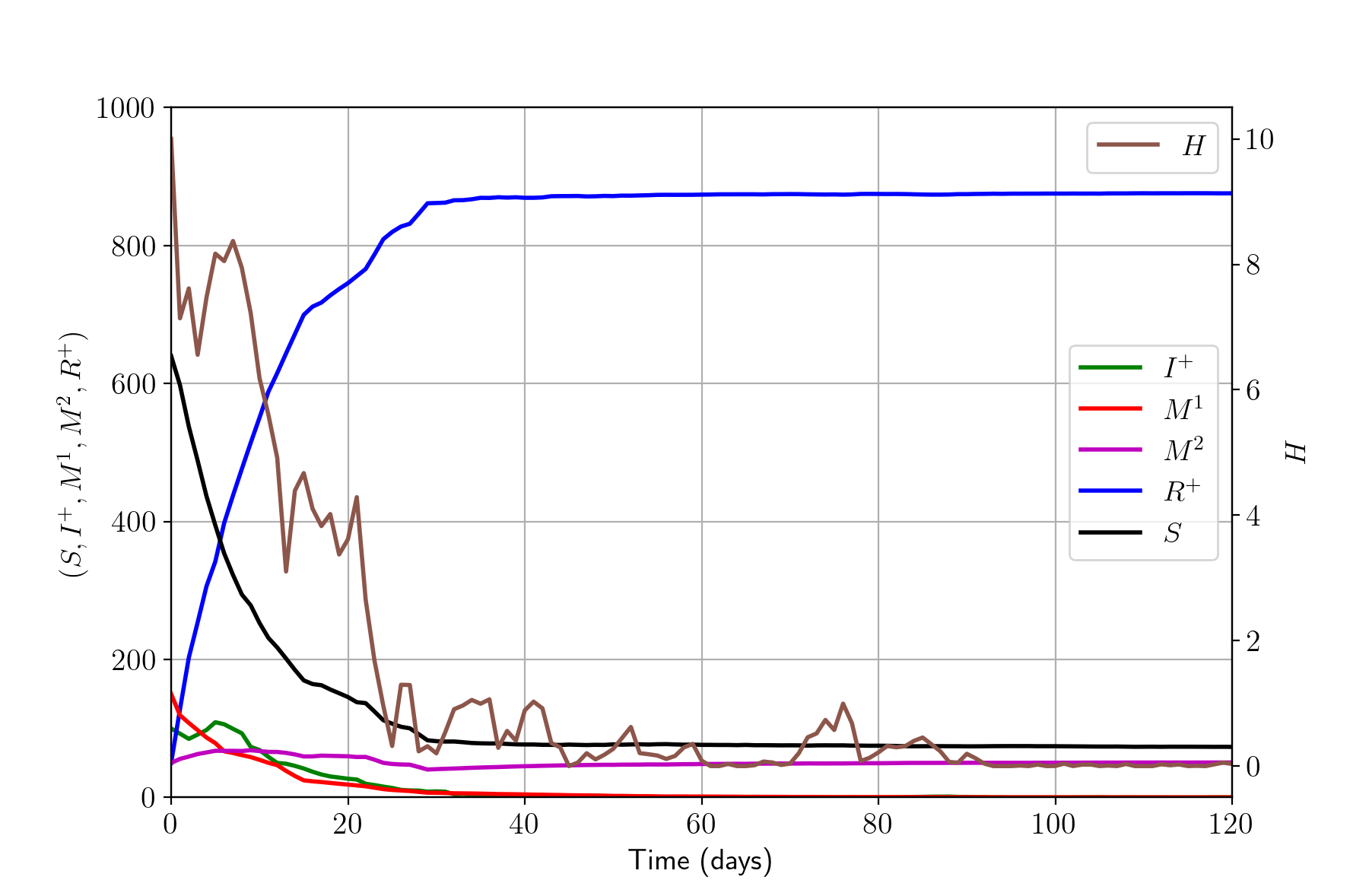}		
			\caption{\textbf{Linear interpolation method}: optimal trajectories of compartments size  ; Initial state  is $\m^1=150,  \m^2=50, \q^1=200,\q^2=200, \rho=0, \z^1=100,  \z^2=50, \z^3=10$; The hospital y-axis is on the right-hand side and the y-axis of the rest is on the left.}
			\label{optpath}
			
		\end{figure}

		\begin{figure}
			\centering
			\includegraphics[width=0.75\linewidth]{./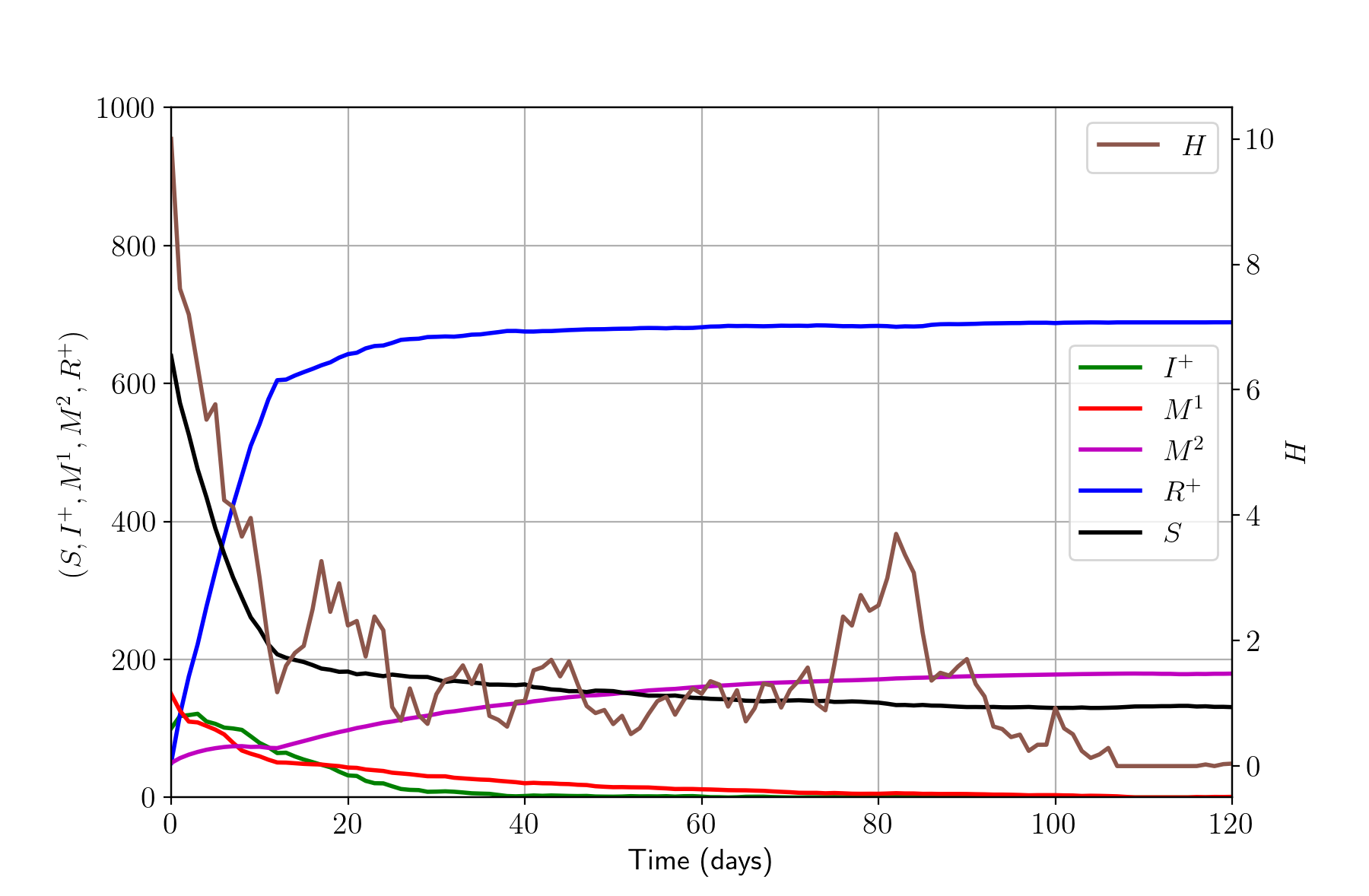}
			
			\caption{\textbf{Regression method}:  Optimal compartment  size  path   ; Initial state:   $(\m^1=150,  \m^2=50, \q^1=200,\q^2=200, \rho=0, \z^1=100,  \z^2=50, \z^3=10$); The hospital y-axis is on the right-hand side and the y-axis of the rest is on the right  }
			\label{optpathfit}
			
		\end{figure}

		\begin{figure}
			\centering
			\includegraphics[width=0.8\linewidth]{./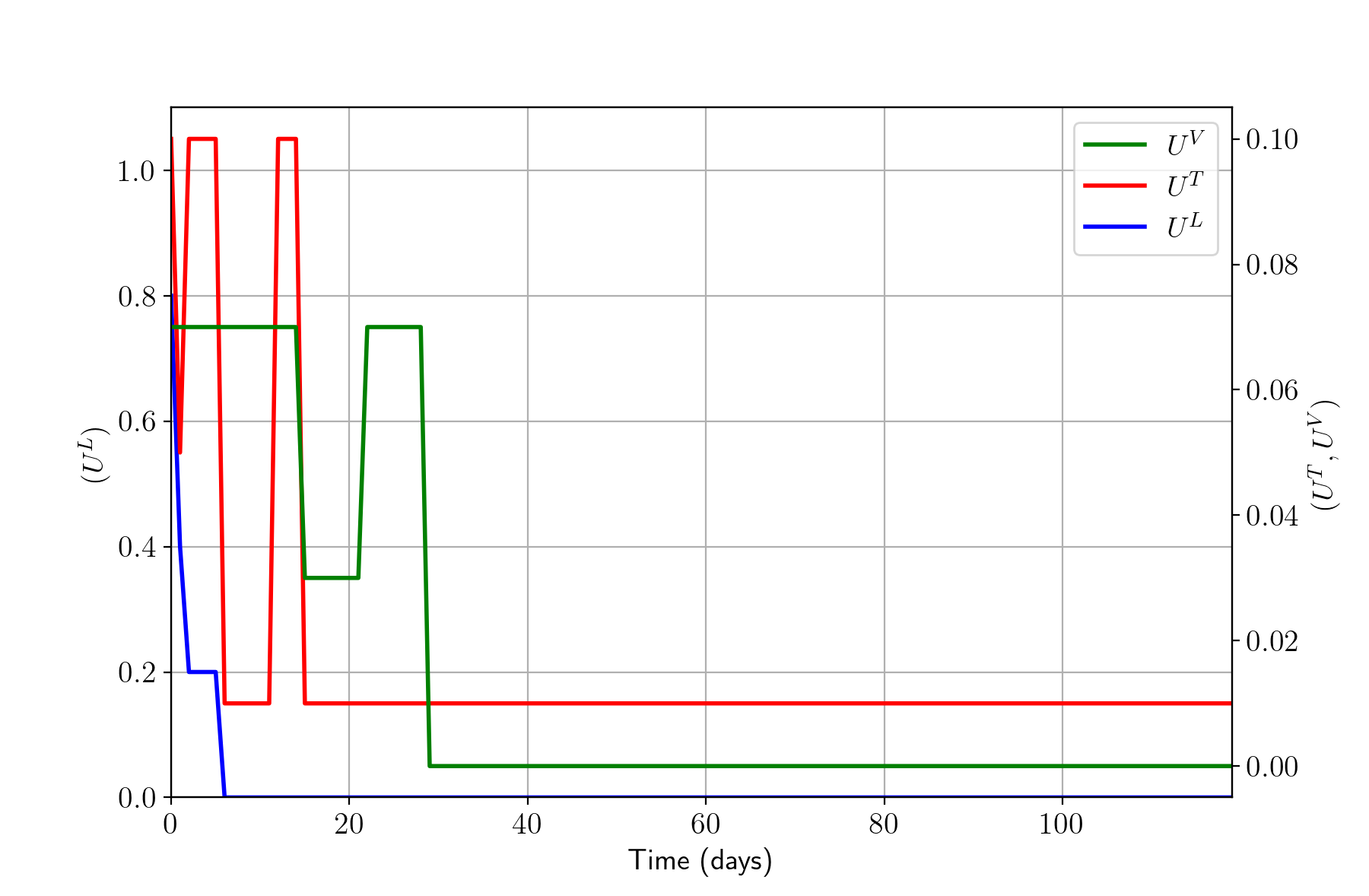}
			
			\caption{\textbf{Linear interpolation method}:  optimal policy ;  Initial state  is $\m^1=150,  \m^2=50, \q^1=200,\q^2=200, \rho=0, \z^1=100,  \z^2=50, \z^3=10$; The $u^L$ y-axis is on the left-hand side and the y-axis of $U^T,U^V$ is on the right. }
			\label{optcontrpath}
			
		\end{figure}
		
		\begin{figure}
			\centering
			\includegraphics[width=0.75\linewidth]{./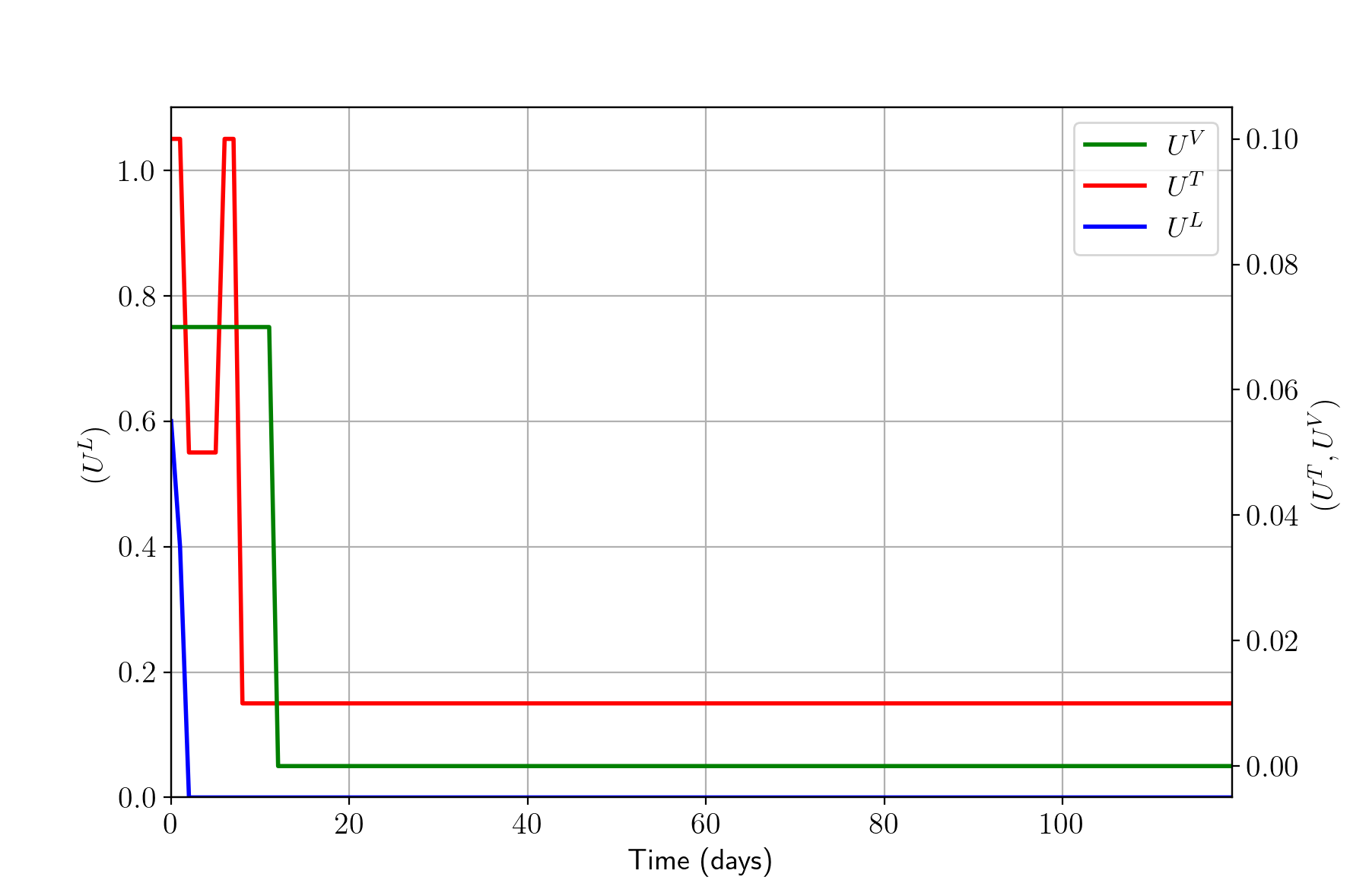}
			
			\caption{\textbf{Regression method}: optimal control path; Initial state:   $(\m^1=150,  \m^2=50, \q^1=200,\q^2=200, \rho=0, \z^1=100,  \z^2=50, \z^3=10$); The $u^L$ y-axis is on the left-hand side and the y-axis of $U^T,U^V$ is on the right.  }
			\label{optcontrpathfit}
			
		\end{figure}

		\subsection{Numerical results using state discretization, quantization and value function regression}
		
		In this section, we present the numerical results obtained through our second approach: value function regression. This method involves selecting a set of ansatz functions for the least squares regression. Leveraging our previous results obtained via linear interpolation, which provided insights into the behavior of the value function with respect to each component of the state process, we consider the following functions defined for $x = (\m^1, \m^2, \q^1, \q^2, \q^{12}, \z^1,\z^2, \z^3)$,
		\begin{itemize}
			\item constant function: ~~~~~~ $\varphi_0 (x)~~=c$;	
			\item Linear functions:	~~~~~~
			\begin{tabular}[t]{lcl@{\hspace{1cm}}lcl}
				$\varphi_1(x)$&$=$&$\z^1$;& $\varphi_4(x)$&$=$&$\m^1$;\\[1ex]	
				$\varphi_2(x)$&$=$&$\z^2$;& $\varphi_5(x)$&$=$&$\m^2$;\\[1ex]	
				$\varphi_3(x)$&$=$&$\z^3$;& $\varphi_6(x)$&$=$&$\q^1$.\\
			\end{tabular}	
			\item Quadratic functions:	
			\begin{tabular}[t]{lcl}
				$\varphi_7(x)$&$=$&$\max( \z^1-\overline{ 	\text{x}}^{I^+},0 )^2$;\\[1ex]	 $\varphi_8(x)$&$=$&$  \max(\m^1-\overline{ 	\text{x}}^{I^-},0)^2$;\\[1ex]	
				$\varphi_9(x)$&$=$&$   \max(N-\z^1-\z^2-\z^3- \overline{ \text{x}}^{\text{Test}},0)^2$ \\
				&$ =$&$ \max (\text{x}^{\text{Test}}-\overline{\text{x}}^{\text{Test}},0)^2$;\\[1ex]	
				$\varphi_{10}(x)$&$=$&$   \max(N-\z^1-\z^2-\z^3- \overline{ \text{x}}^{\text{Vacc}},0)^2$ \\
				&$ =$&$ \max (\text{x}^{\text{Test}}-\overline{\text{x}}^{\text{Vacc}},0)^2 $;\\[1ex]	
				$\varphi_{11}(x)$&$=$& $(N-\z^1)^2~ =~ (\text{x}^{\text{work}})^2$ ;\\[1ex]
				
				$\varphi_{12}(x)$&$=$&$  \max(\z^3-\overline{ 	\text{x}}^{H},0)^2$\\[1ex]					
			\end{tabular}
			\item Function appearing in the infection cost for non-detected infectious:
			
			$\varphi_{13}(x)$$=$ \small{  $\big((\m^1-\overline{ \text{x}}^{I^-})^2 +\q^1 \big) \Phi_{\mathcal{N}}\Big( \dfrac{\m^1-\overline{ \text{x}}^{I^-}}{\sqrt{\q^1}}\Big ) 
				+ (\m^1-\overline{ \text{x}}^{I^-}) \sqrt{\dfrac{\q^1}{2 \pi}}
				\exp \Big(- \frac{1}{2} \dfrac{(\m^1-\overline{ \text{x}}^{I^-})^2}{\q^1}\Big)  ;$}

			\noindent 	where  $\Phi_{\mathcal{N}}$  is the cumulative  distribution function of  the standard normal distribution. 
			
		\end{itemize}
		
		Each of these functions  is either  of  the form of a function appearing  in the running or terminal  cost,  or  follows the shape observed  from  the linear interpolation results. It is important to note that for the value function at terminal time, we know exactly  the structure of the value function.
		
		To ensure a fair   comparison between  results from both methods, we use the same set of parameters and discretization grid points   as in the first approach.

		\subsubsection{Value function and optimal decision rule w.r.t. $\condmean^1, Z^1$}  Similar to the first method we visualize the  value function  and optimal decision rule w.r.t. the two infection compartments. Once again, we consider  scenarios of remaining compartments held fixed to small  and moderate   sizes as previously. Figures \ref{LI118smallfit}, \ref{LI100smallfit}, \ref{LI118modfit}, and \ref{LI100modfit} display the  value function and optimal decision in   configurations identical to those used  in the linear interpolation approach. Overall, the results from both methods appear consistent, especially both optimal decision rules seem to exhibit a generally  similar  structure. However, it should firstly be  noted   that  the optimal decision rule is smoother  for the  approach with value function regression. This is certainly due to the constraint of the value function to belong to the vector space of functions generated by the chosen basis of ansatz functions. Secondly, the optimal decision rules obtained  with regression do not capture well all the nuances observed at the boundaries  in the linear interpolation method results. For instance, comparing the  optimal decision rules in Figures  \ref{LI118mod}, \ref{LI100mod} and  \ref{LI118modfit}, \ref{LI100modfit}, we observe in the result  with linear interpolation that  as both  the number of detected infected  $Z^1$ and  the estimated number of non-detected infected   $\condmean^1$  are  greater than their respective thresholds, the optimal decision rule  suggests  dropping  the lockdown  and vaccination measures  and maintaining only the maximum possible level of  testing. Conversely, the linear regression result  indicates  maintaining all the three control measures at high levels, with  only the lockdown rate that drop slightly at time step $t=100$ days.		
		Thirdly,  the value function obtained through regression seems to underestimate that derived from linear interpolation. This is illustrated in the value functions at time step $t=100$ days (Figures \ref{LI100small}, \ref{LI100mod} and  \ref{LI100smallfit}  \ref{LI100modfit}), where in the case of linear interpolation the maximum values is greater than $100,000$ monetary  unit,  while in the  regression case, the maximum value is around $60,000$ monetary unit.

		\subsubsection{Optimal path  of the epidemic state process}
		The optimal decision rule derived from value function regression can then be used to simulate the optimal dynamics of the epidemic. Figures \ref{optpathfit} and \ref{optcontrpathfit}  depict sample path of compartment sizes  and the optimal control measures to be applied, respectively. These figures show that the number of infected individuals decreases progressively to zero  within $80$  days. The implemented control strategy involves an early strong lockdown lasting only a few days, coupled with vaccination for nearly $15$  days  and less than $10$ days of testing. After $15$ days, all the control measures are lifted  and only the minimal possible testing continues. Compared to the optimal strategy derived from the linear interpolation method, the strategy obtained through regression takes a relatively longer period to reduce the number of infected individuals to zero. This is understandable, as restricting the form of the value function results in a suboptimal strategy. However, this suboptimal strategy effectively keeps the hospital compartment size quite small and below the threshold capacity throughout the infection period.

		\bigskip 
		\begin{footnotesize}
			%

			\smallskip\noindent\textbf{Acknowledgments~}	 
			The authors thank      Olivier Menoukeu Pamen (University of Liverpool), Gerd Wachsmuth,   Armin Fügenschuh, Markus Friedemann, Jesse Beisegel (BTU Cottbus--Senftenberg)	for insightful discussions and valuable suggestions that improved this paper.

			\smallskip\noindent
			\textbf{Funding~} 	
			The  authors gratefully acknowledge the  support by the Deutsche Forschungsgemeinschaft (DFG), award number 458468407,  and by the  German Academic Exchange Service (DAAD), award number 57417894.

			%
			%
			
		\end{footnotesize}

		\appendix
		\section*{Appendix}
		\addcontentsline{toc}{section}{Appendix}

		\section{Proofs}\label{proof}
		
		\begin{proof}[Proposition \ref{costprop}]
			Let us denote by $\mathcal{F}^Z_0 =\mathcal{F}^I_0 v \sigma\{Z_0\}$   the initial  information $\sigma$-algebra generated by the initial guess of $Y_0$   and the initial observation  $Z_0$. We  recall that the conditional distribution of $Y(0)$    follows a  Gaussian law  $\mathcal{N}(m_0, q_0)$.  
			
			Taking  the conditional  expectation of the performance criterion  with  respect to  the  initial information $\mathcal{F}^Z_0$, we obtain

			{\small \begin{eqnarray*}
					\E[\mathcal{J}^F(X_0,u)\vert  \mathcal{F}^Z_0 ]		&=& \E \Big[\sum_{n=0}^{N_t-1} \hspace*{1em} \Psi^F((Y_n, Z_n),u_n)  \hspace*{2em}    +\hspace*{1.5em}  \Phi^F(Y_{N_t}, Z_{N_t}) \Big\vert  \mathcal{F}^Z_0 	\Big] \\[.5ex] 
					&\D =
					&\D \E\Big[\sum_{n=0}^{N_t-1}   \E\Big[ \Psi^F((Y_n, Z_n),u_n) \Big\vert \mathcal{F}_n^Z \Big]     +  \E\Big[\Phi^F(Y_{N_t}, Z_{N_t})  \Big\vert \mathcal{F}_{N_t}^Z \Big] \Big\vert \mathcal{F}^Z_0	\Big] 
			\end{eqnarray*}}
			
			This last equality  is obtained  using the tower low  of  conditional  expectation since $\mathcal{F}_0^Z \subseteq \mathcal{F}_n^Z$ for all $n \geq 0$.  			
			Then,  because $\Psi^F$ and $ \Phi^F$  are considered linear and quadratic in $y$, the above conditional expectation $\E[\ldots\vert \mathcal{F}_n^Z ]$, $n=0,\cdots, N_t$  can be expressed in terms of the  extended Kalman filter $(\condmean,\condvar)$ approximation for the  hidden state $Y$. We  recall that in the case of conditionally Gaussian   sequences treated in \citet{liptser2013statistics}, the  conditional distribution of $Y_n$ is the Gauss distribution $\mathcal{N}(\condmean_n,\condvar_n)$.  When computing the conditional expectation of running and terminal  costs, expression that  are  fully observable,  meaning that they do not depend on  the  hidden state $Y$,  are not modified. In our case, it means that only the running  and terminal  infection  costs  for non-detected infectious are modified (since $X^{ \text{Work}}=N-Z^1$ and $X^{ \text{Test}}=N-(Z^1+Z^2+Z^3)$  allow to obtain fully observable expressions for economic, detection  and vaccination  costs). The running infection costs for non-detected infectious becomes

			\begin{eqnarray*}
				\mathbb{E}\Big [	C^-_I \left(Y^1_n,\overline{ \text{x}}^{I^-}\right) \Big\vert \mathcal{F}_{n}^Z  \Big]  &=&	 \mathbb{E}\Big [	a_{I^-} Y^1_n+ b_{I^-} (Y^1_n-\overline{ \text{x}}^{I^-})_+^2 \Big\vert \mathcal{F}_{n}^Z  \Big]  \\ [0.5ex] 
				&=&		a_{I^-} M^1_n+ b_{I^-}  \mathbb{E}\Big [(Y^1_n-\overline{ \text{x}}^{I^-})_+^2 \Big\vert \mathcal{F}_{n}^Z  \Big].
			\end{eqnarray*}
			
			Further, because the conditional law of $Y^1_n$ given $\mathcal{F}_{n}^Z$ is Gaussian with mean $\condmean^1_n$ and variance  $\condvar^1_n$, we have,  
			
			\begin{eqnarray*}
				\mathbb{E}\Big [	C^-_I \left(Y^1_n,\overline{ \text{x}}^{I^-}\right) \Big\vert \mathcal{F}_{n}^Z  \Big] 
				&=&		a_{I^-} M^1_n+ b_{I^-}  \int_{\overline{ \text{x}}^{I^-}}^{\infty} (y-\overline{ \text{x}}^{I^-})^2  \dfrac{1}{\sqrt{2 \pi \condvar^1_n}} \exp \Big(- \frac{1}{2} \dfrac{(y-\condmean^1_n)^2}{\condvar^1_n}\Big) dy .
			\end{eqnarray*}
			
			Similar expressions are derived  for the terminal infection  costs for non-detected infectious. 
			
			Hence, there exist measurable  functions $\Psi$ and $\Phi$ as given in the proposition, such that

			\begin{equation*}
				\E[\mathcal{J}^F(X_0,u)\vert  \mathcal{F}^Z_0 ]=\D  \E_{m_0, q_0 ,z}\Big[\sum_{n=0}^{N_t-1} \Psi(\condmean_n,\condvar_n, Z_n,u_n)      + \Phi(\condmean_{N_t},\condvar_{N_t}, Z_{N_t})  	\Big]\\
			\end{equation*}	\qed 
		\end{proof}
		\begin{proof}[Lemma \ref{lem_integral} ] \label{prooflem}
			The transformation of this integral is done by two successive changes of variable. First, let $\widetilde{y}=y-\overline{ \text{x}}^{I^-}$, thus 
			the integral becomes \\ $ \displaystyle \int_{0}^{\infty} \widetilde{y}^2   \dfrac{1}{\sqrt{2 \pi q^1}} \exp \Big(- \frac{1}{2} \dfrac{(\widetilde{y}+\overline{ \text{x}}^{I^-}-m^1)^2}{q^1}\Big) d\widetilde{y} $.   Second, let $\widehat{m}=m^1-\overline{ \text{x}}^{I^-}$  and $\widehat{y}= \dfrac{\widetilde{y}- \widehat{m}}{\sqrt{q^1}}$, we deduce that the integral reads
			
			$ \displaystyle \int_{-\dfrac{\widehat{m}}{\sqrt{q^1}}}^{\infty} (\widehat{m}+\sqrt{q^1}\widehat{y})^2   \dfrac{1}{\sqrt{2 \pi }} \exp \Big(- \frac{1}{2} \widehat{y}^2\Big) d\widehat{y} $.
			
			After  expansion of the quadratic term in the integral  and identification of the cumulative distribution function of the standard normal distribution\\ $\Phi_{\mathcal{N}}(t)=\displaystyle \int_{-\infty} ^t \dfrac{1}{\sqrt{2 \pi}} \exp(-\dfrac{1}{2} \widehat{y}^2) d\widehat{y}$,   we obtain the result\\
			$\big((m^1-\overline{ \text{x}}^{I^-})^2 +q^1 \big) \Phi_{\mathcal{N}}\Big( \dfrac{m^1-\overline{ \text{x}}^{I^-}}{\sqrt{q^1}}\Big ) 
			+ (m^1-\overline{ \text{x}}^{I^-}) \sqrt{\dfrac{q^1}{2 \pi}}
			\exp \Big(- \frac{1}{2} \dfrac{(m^1-\overline{ \text{x}}^{I^-})^2}{q^1}\Big).$
			
		\end{proof}
		
		\section{Transition operator functions} \label{transfunct}
		Expressions of functions involved in the transition operators are:
		\begin{align*}
			f_M(n,m,q,z, \nu) =&  m+f(n,m, z) , \\
			f_Q(n,m,q,z, \nu) =&   -\left[ g\ell^\top+
			f_1 q h_1^\top \right]  \left[\ell\ell^\top + h_1 q h_1^\top\right]^{+}  \left[ g\ell^\top+ f_1 q h_1^\top  \right]^\top 
			+f_1 q f_1^\top+ \sigma\sigma^\top , \\
			f_Z(n,m,q,z, \nu) =&  h_0+h_1 m ,\\
			g_M(n,m,q,z, \nu) =& \left[ g\ell^\top+f_1 q h_1^\top \right] \Big(\left[\ell\ell^\top + h_1 q h_1^\top\right]^{+}\Big)^{\frac{1}{2}},\\
			g_Z(n,m,q,z, \nu) =& \left[\ell\ell^\top + h_1 q h_1^\top \right]^{1/2}.   
		\end{align*}

		\section{Additional numerical  results}
		\subsection{Value function obtained with interpolation method  w.r.t. each component of the state process} 
		To better understand how the value function evolves with respect to each component of the state process, we freeze all variables except one and represent the value function in terms of that single variable. This approach is particularly helpful in determining the form of the ansatz function required for the second numerical approach, which involves fitting the value function. Therefore, the other components are fixed at either small or moderate values while the value function is plotted against the component of interest.
		Figures \ref{VFvsZY1}( Panels \subref{z1pa},\subref{y1pa},\subref{z3pa}),  \ref{VFvsZY1fit} ( Panels \subref{z1pb},\subref{y1pb},\subref{z3pb}),
		\ref{VFvsZY2}( Panels \subref{z2pa},\subref{y2pc},\subref{q1pa}),  and \ref{VFvsZY2fit} ( Panels \subref{z2pb},\subref{y2pd},\subref{q1pb})  illustrate the shape of the value function depending on these components. We observe that the value function has a linear plus quadratic shape with respect to components $Z^1$, $Z^3$, and $\condmean^1$. Additionally, it exhibits a decreasing shape as a function of $Z^2 = R^+$ and $\condmean^2$. Finally, the value function appears as a linearly increasing function with respect to the first diagonal entry of the conditional covariance $Q^1$. This  can be explained by the fact that as $Q^1$ increases, there is more uncertainty in the estimate of $\condmean^1$, suggesting that higher values of $\condmean^1$ are more likely and come with higher costs. 
		
		\newpage
		\begin{figure}
			\centering	
			\hspace{-1.2cm}\textbf{Interpolation} \hspace{5cm}  \textbf{Regression} \\		
			\begin{subfigure}{.5\textwidth}
				\centering			  
				\includegraphics[width=1.1\linewidth]{./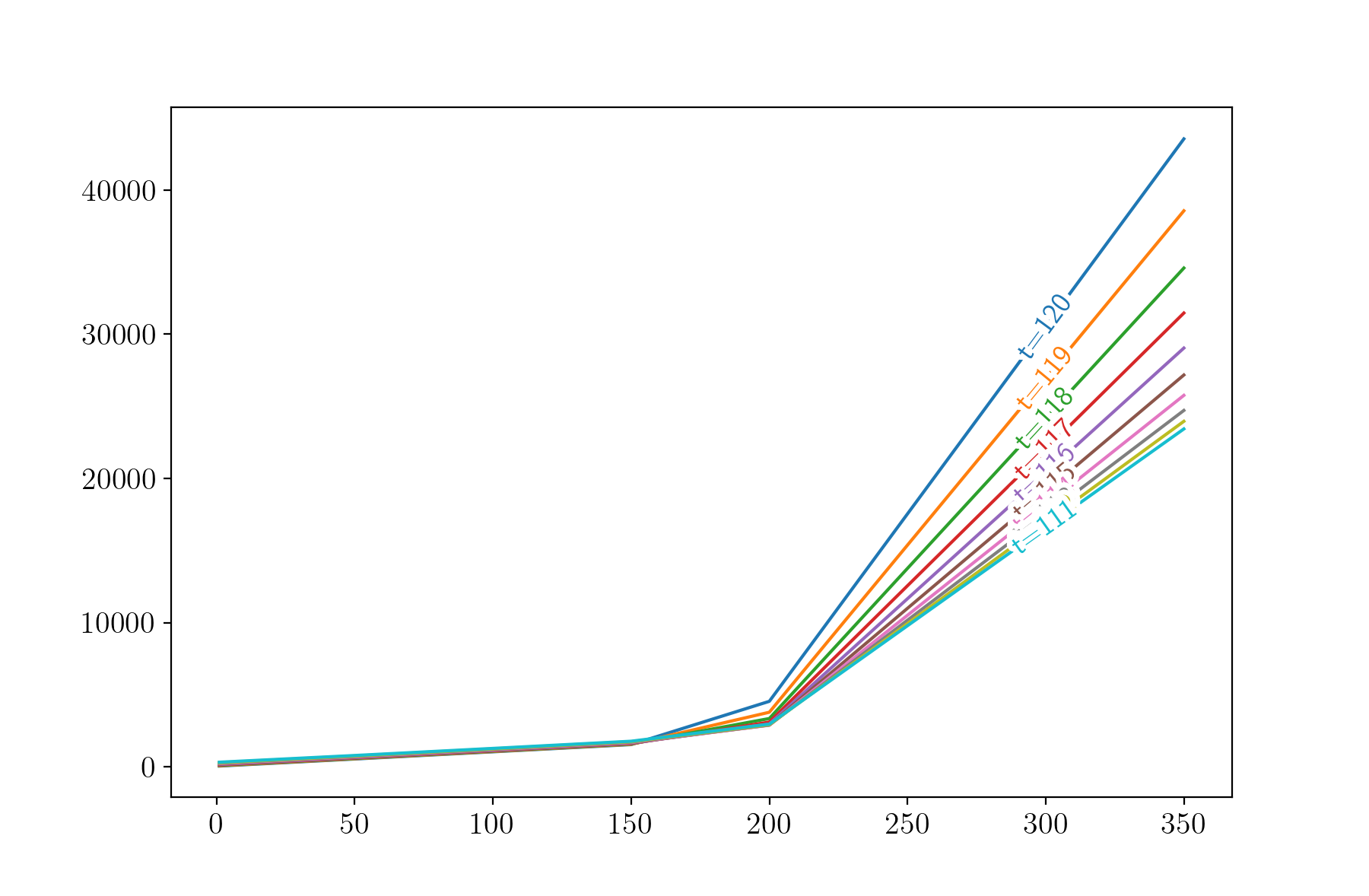}
				\caption{ Function of $\z^1$, other variables small }
				\label{z1pa}
			\end{subfigure}\begin{subfigure}{.5\textwidth}
				\centering%
				\includegraphics[width=1.1\linewidth]{./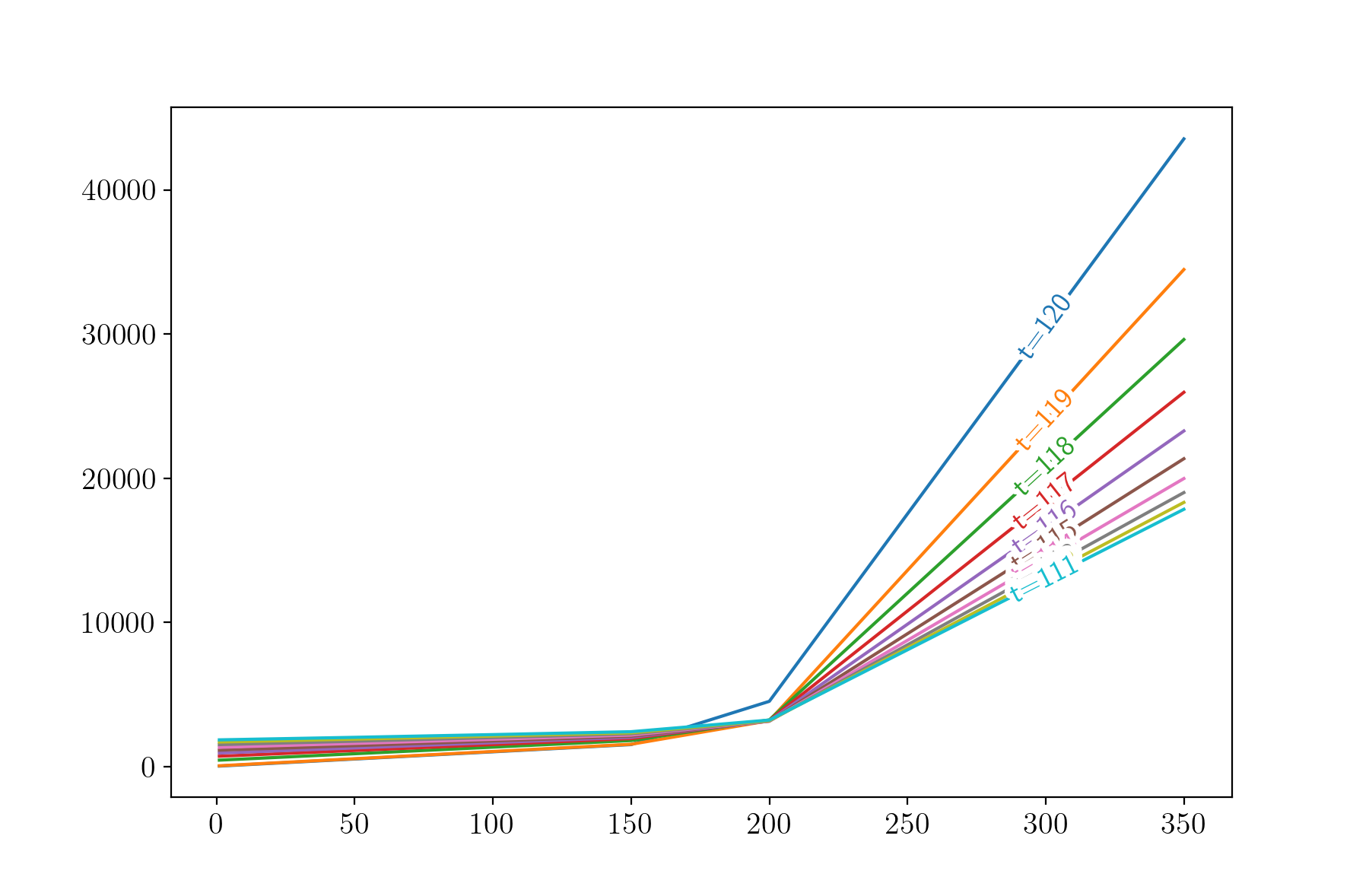}
				\caption{ Function of $\z^1$, other variables small }
				\label{z1pafit}
			\end{subfigure}\\
			\begin{subfigure}{.5\textwidth}
				\centering
				\includegraphics[width=1.1\linewidth]{./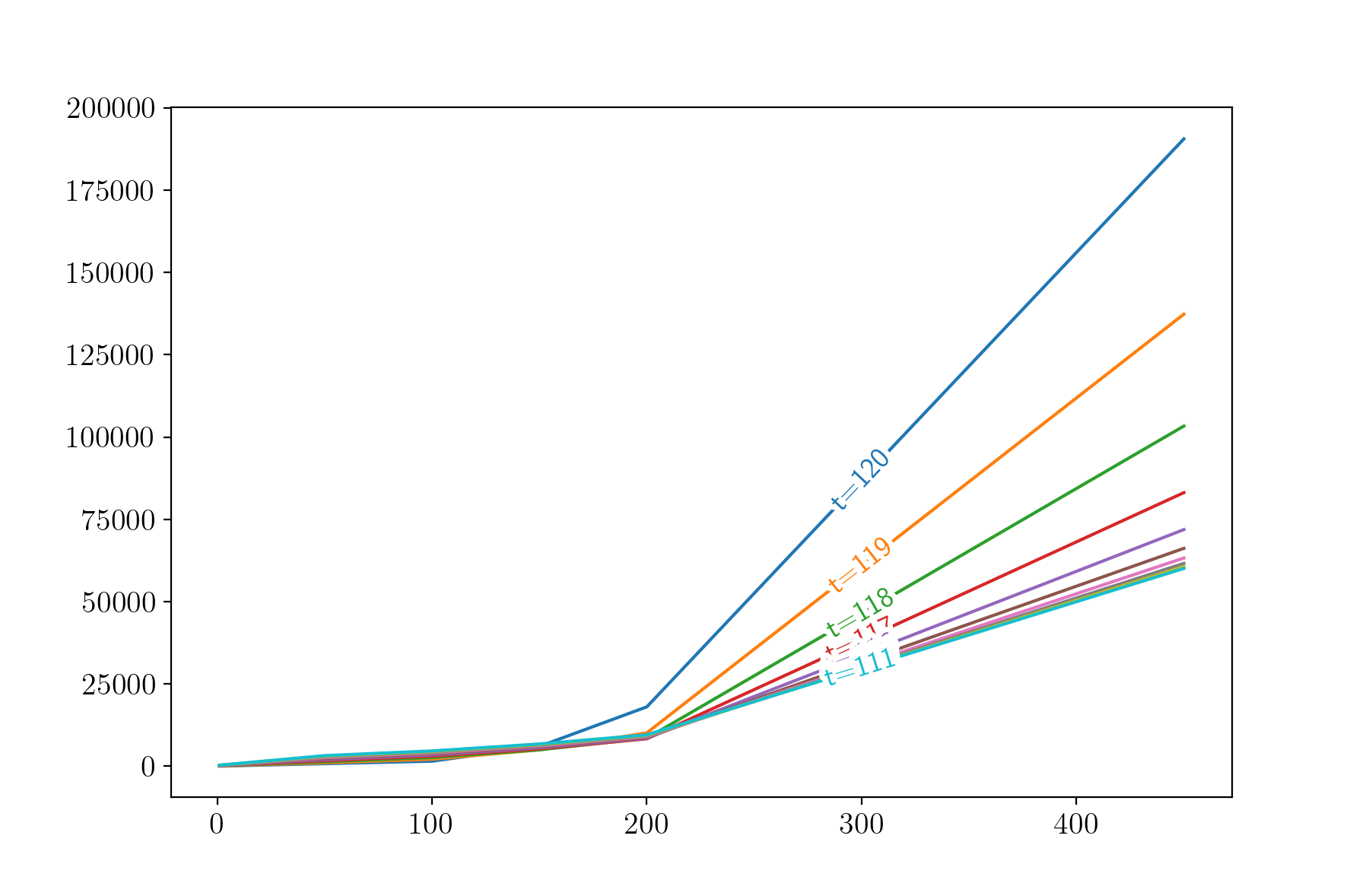}
				\caption{Function of $\m^1$, other variables small}
				\label{y1pa}
			\end{subfigure}\begin{subfigure}{.5\textwidth}
				\centering
				\includegraphics[width=1.1\linewidth]{./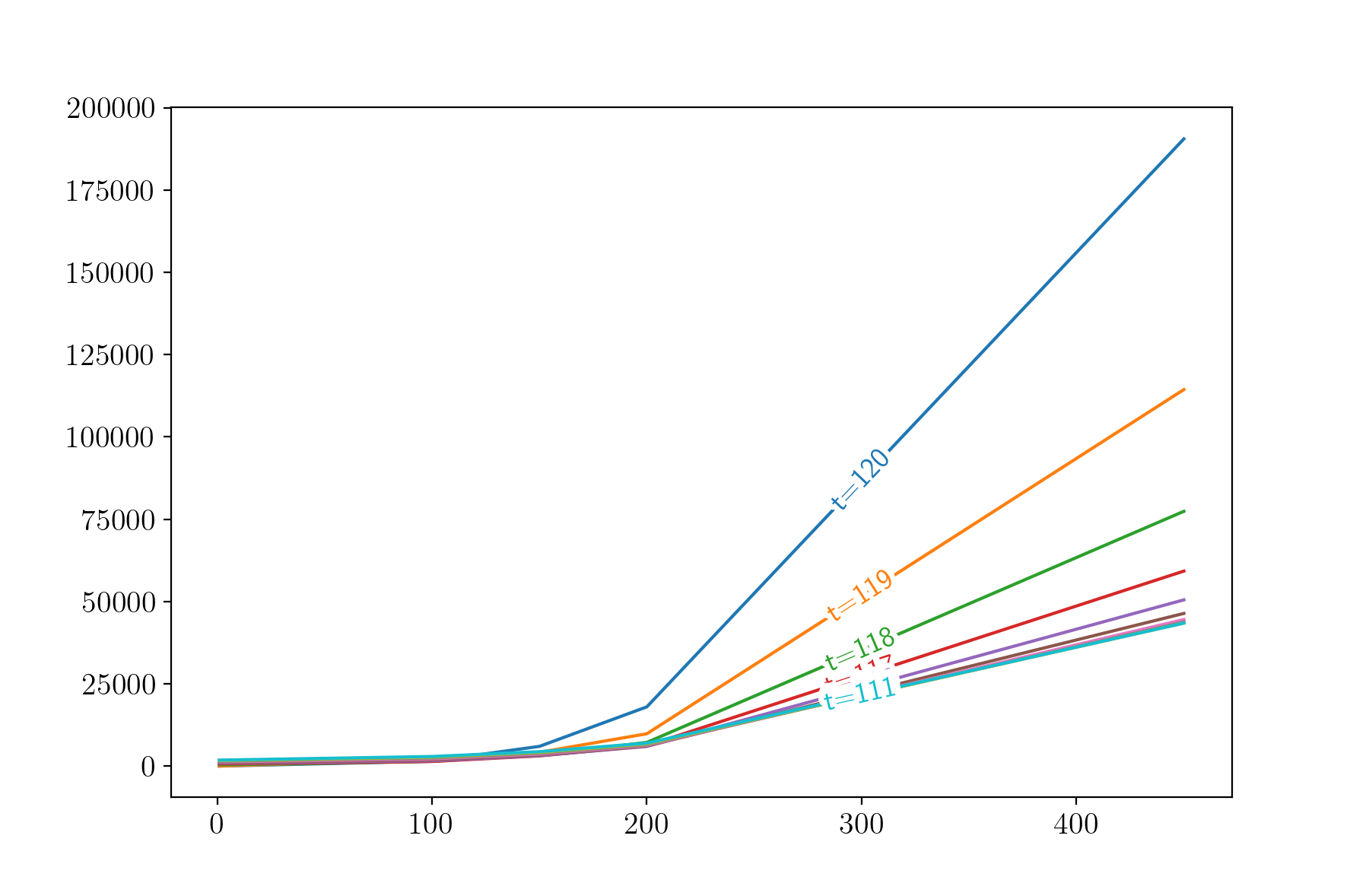}
				\caption{Function of $\m^1$, other variables small}
				\label{y1pafit}
			\end{subfigure}\\
			\begin{subfigure}{.5\textwidth}
				\centering
				\includegraphics[width=1.1\linewidth]{./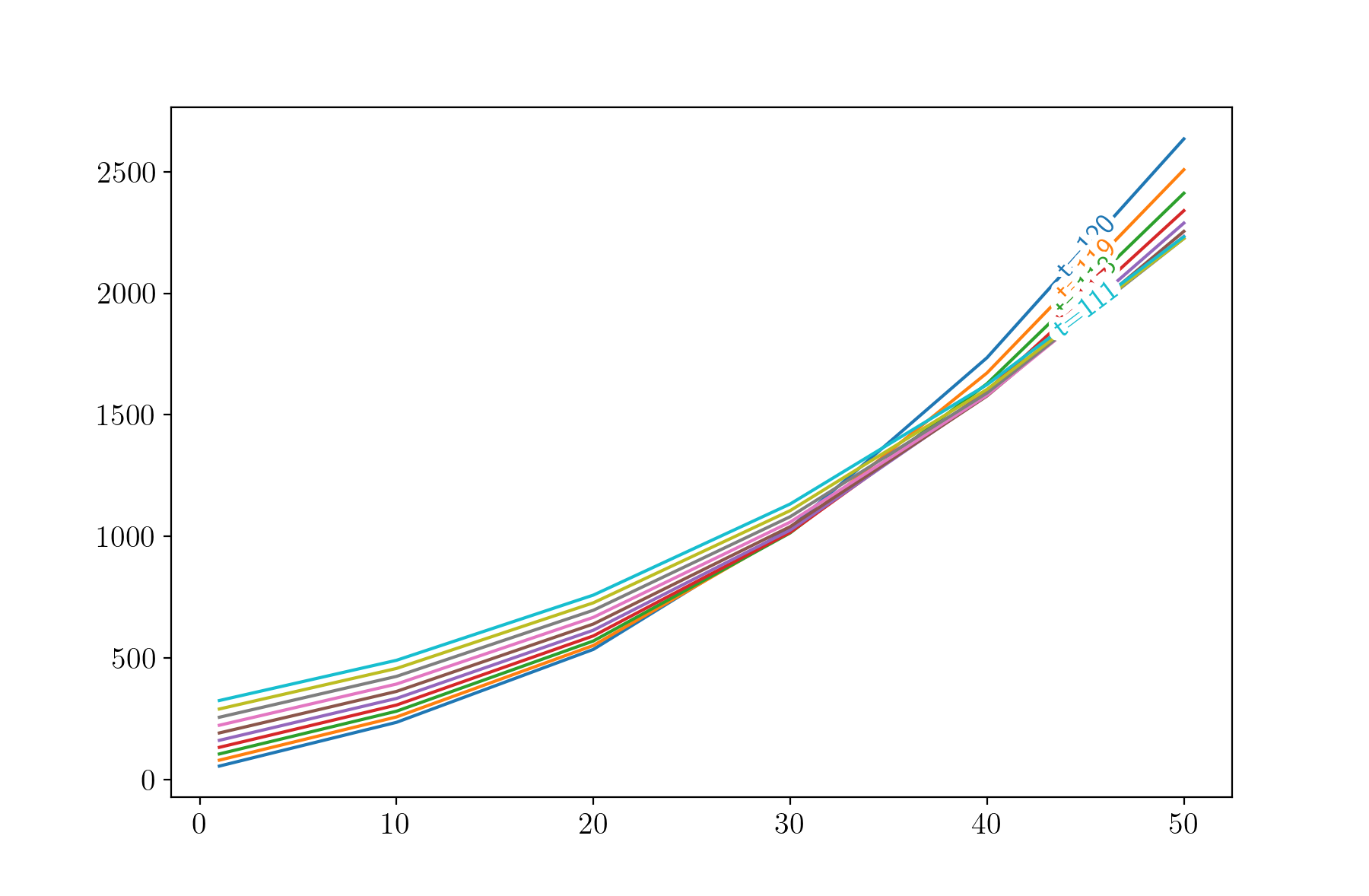}
				\caption{Function of $\z^3$, other variables small }
				\label{z3pa}
			\end{subfigure}\begin{subfigure}{.5\textwidth}
				\centering
				\includegraphics[width=1.1\linewidth]{./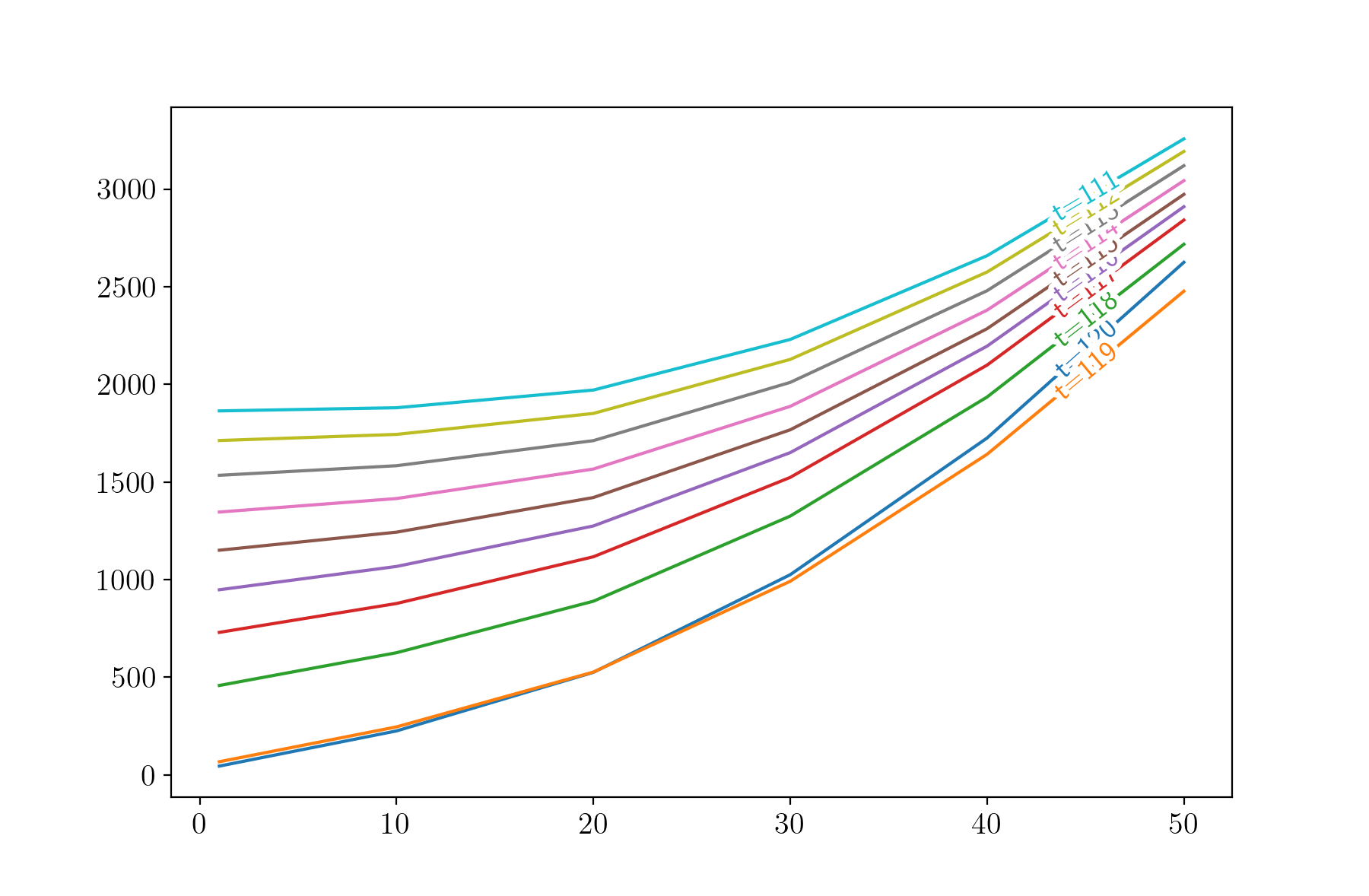}
				\caption{Function of $\z^3$, other variables small }
				\label{z3pafit}
			\end{subfigure}			\caption{Value function  from linear interpolation approach (left) and regression approach\\ (right) as function of $\z^1$, $\m^1$ and $\z^3$ }
			\label{VFvsZY1}
		\end{figure}

		
		\begin{figure}
			\centering	\hspace{-1.2cm}\textbf{Interpolation} \hspace{5cm}  \textbf{Regression} \\
			\begin{subfigure}{.5\textwidth}
				\centering
				\includegraphics[width=1.1\linewidth]{./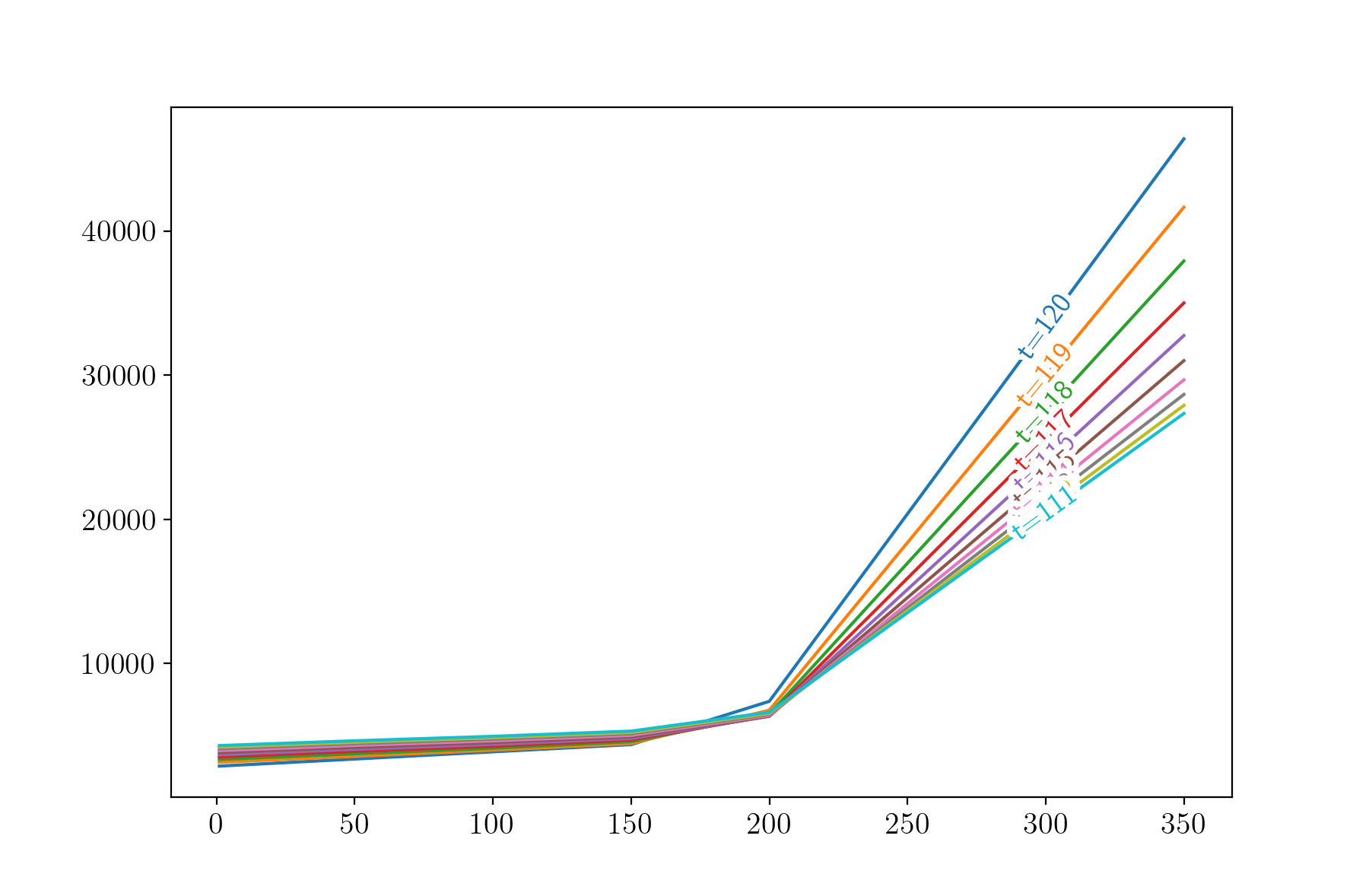}
				\caption{Function of $\z^1$, other variables moderate}
				\label{z1pb}
			\end{subfigure}\begin{subfigure}{.5\textwidth}
				\centering
				\includegraphics[width=1.1\linewidth]{./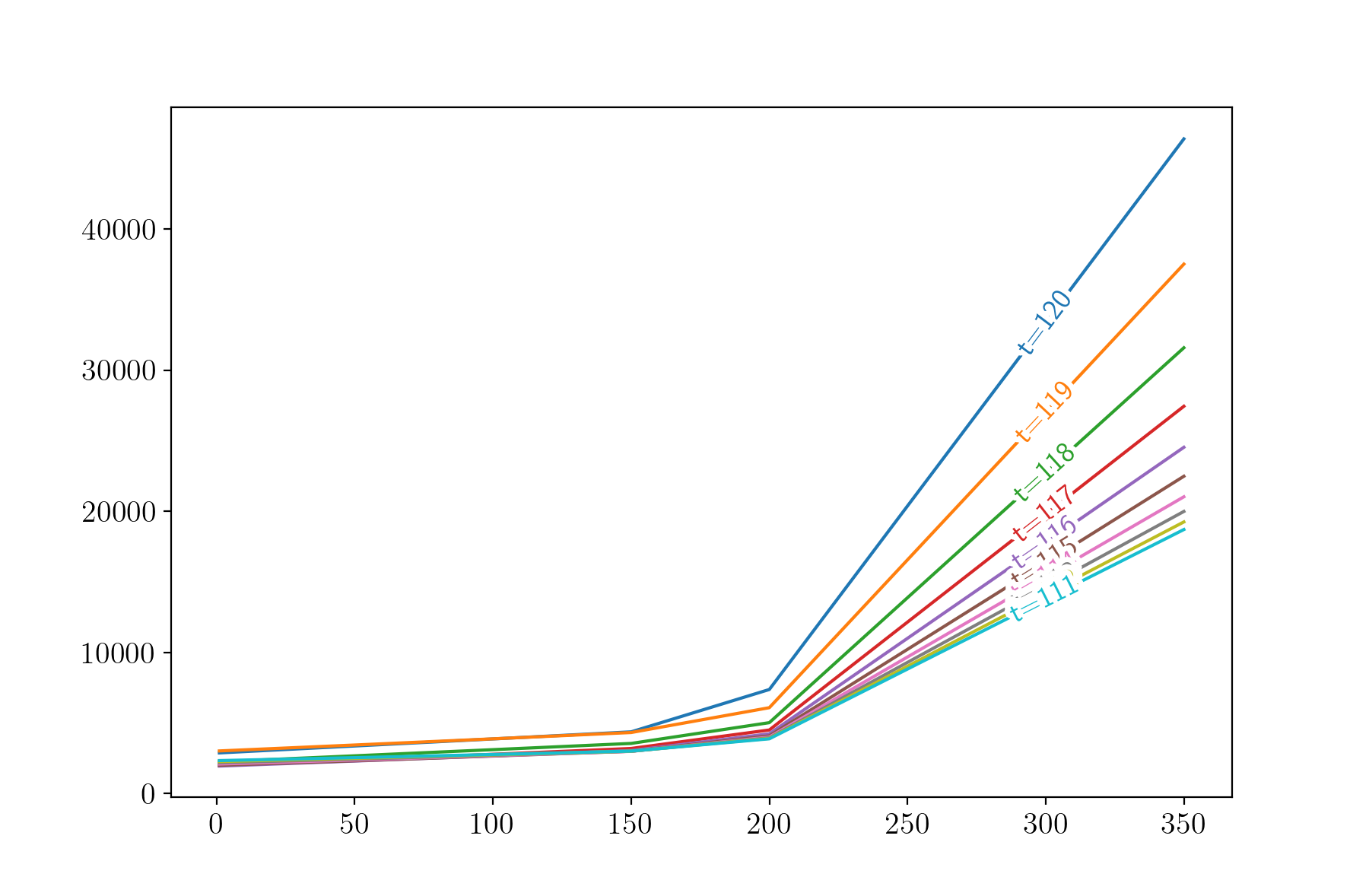}
				\caption{Function of $\z^1$, other variables moderate}
				\label{z1pbfit}
			\end{subfigure}\\
			\begin{subfigure}{.5\textwidth}
				\centering
				\includegraphics[width=1.1\linewidth]{./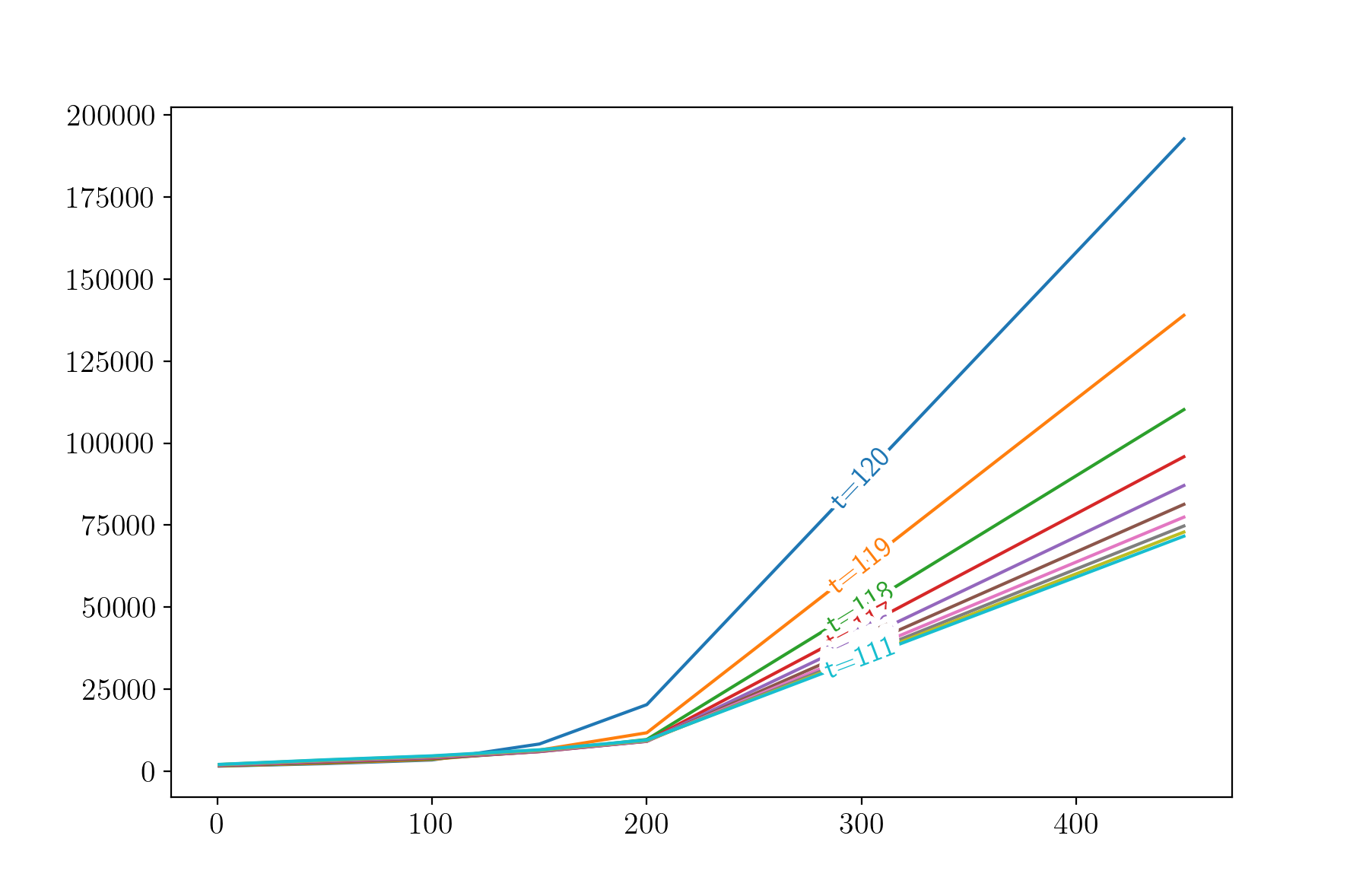}
				\caption{Function of $\m^1$, other variables moderate}
				\label{y1pb}
			\end{subfigure}\begin{subfigure}{.5\textwidth}
				\centering
				\includegraphics[width=1.1\linewidth]{./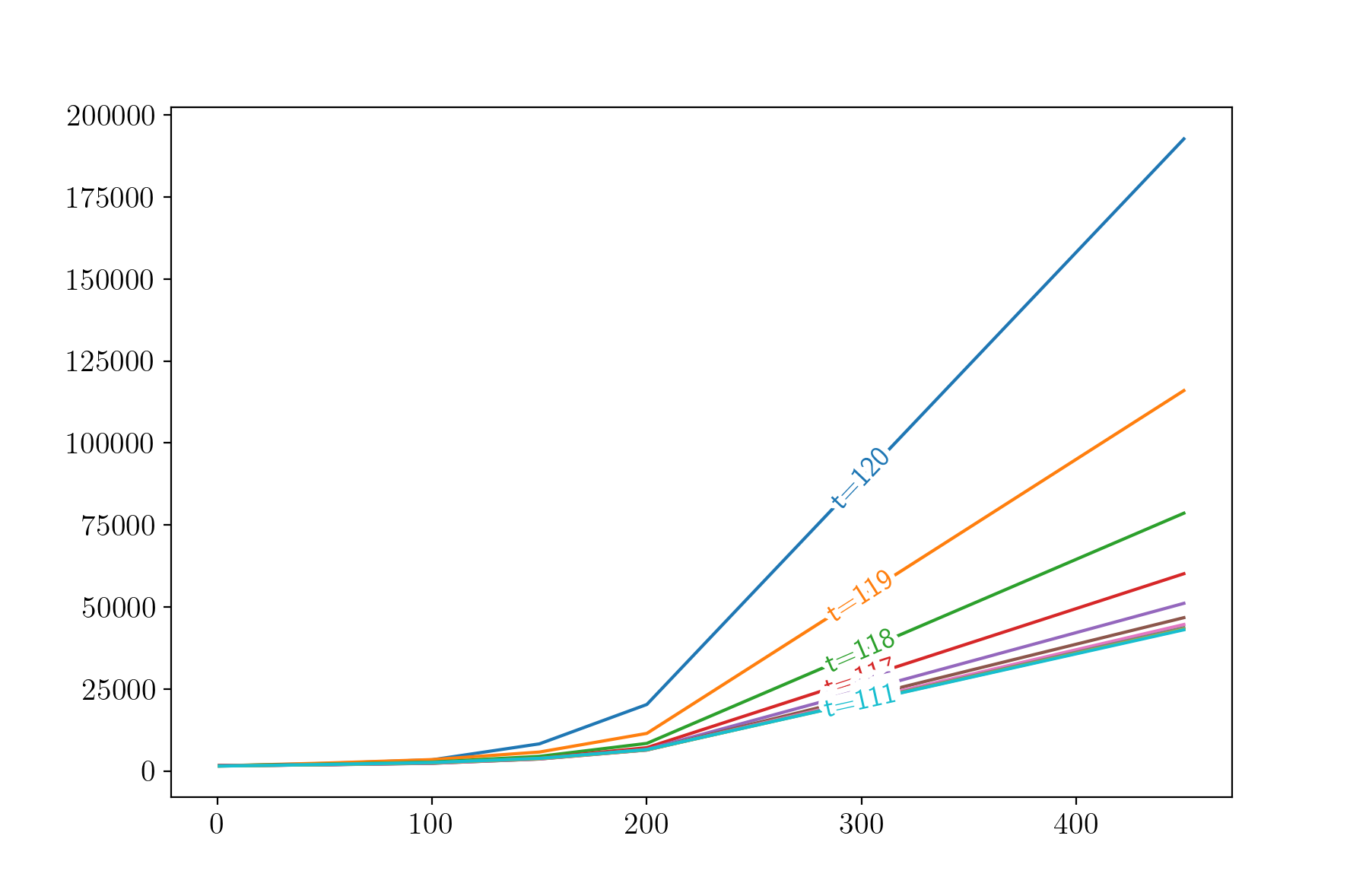}
				\caption{Function of $\m^1$, other variables moderate}
				\label{y1pbfit}
			\end{subfigure}\\
			\begin{subfigure}{.5\textwidth}
				\centering
				\includegraphics[width=1.1\linewidth]{./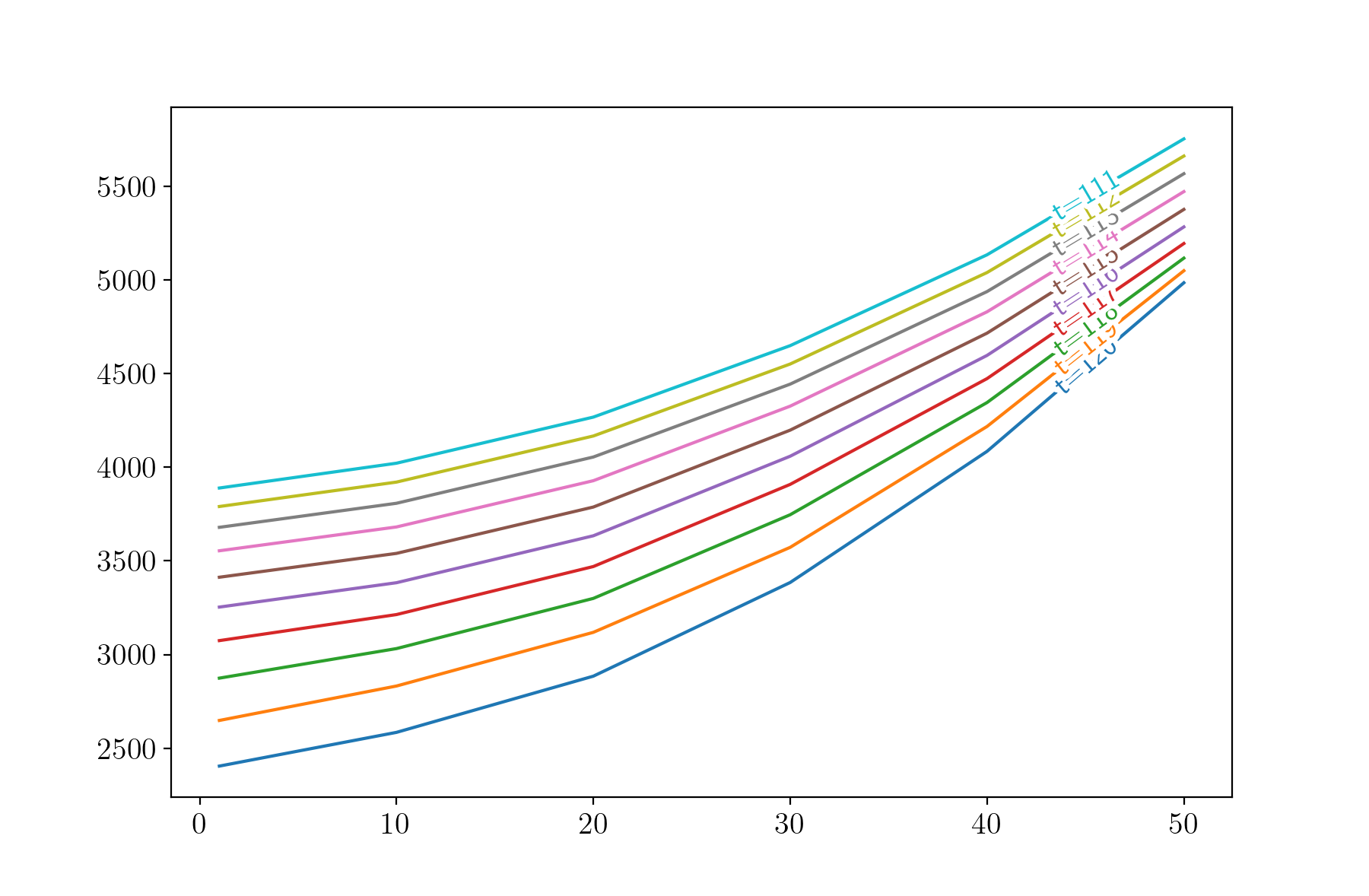}
				\caption{Function of $\z^3$, other variables moderate}
				\label{z3pb}
			\end{subfigure}\begin{subfigure}{.5\textwidth}
				\centering
				\includegraphics[width=1.1\linewidth]{./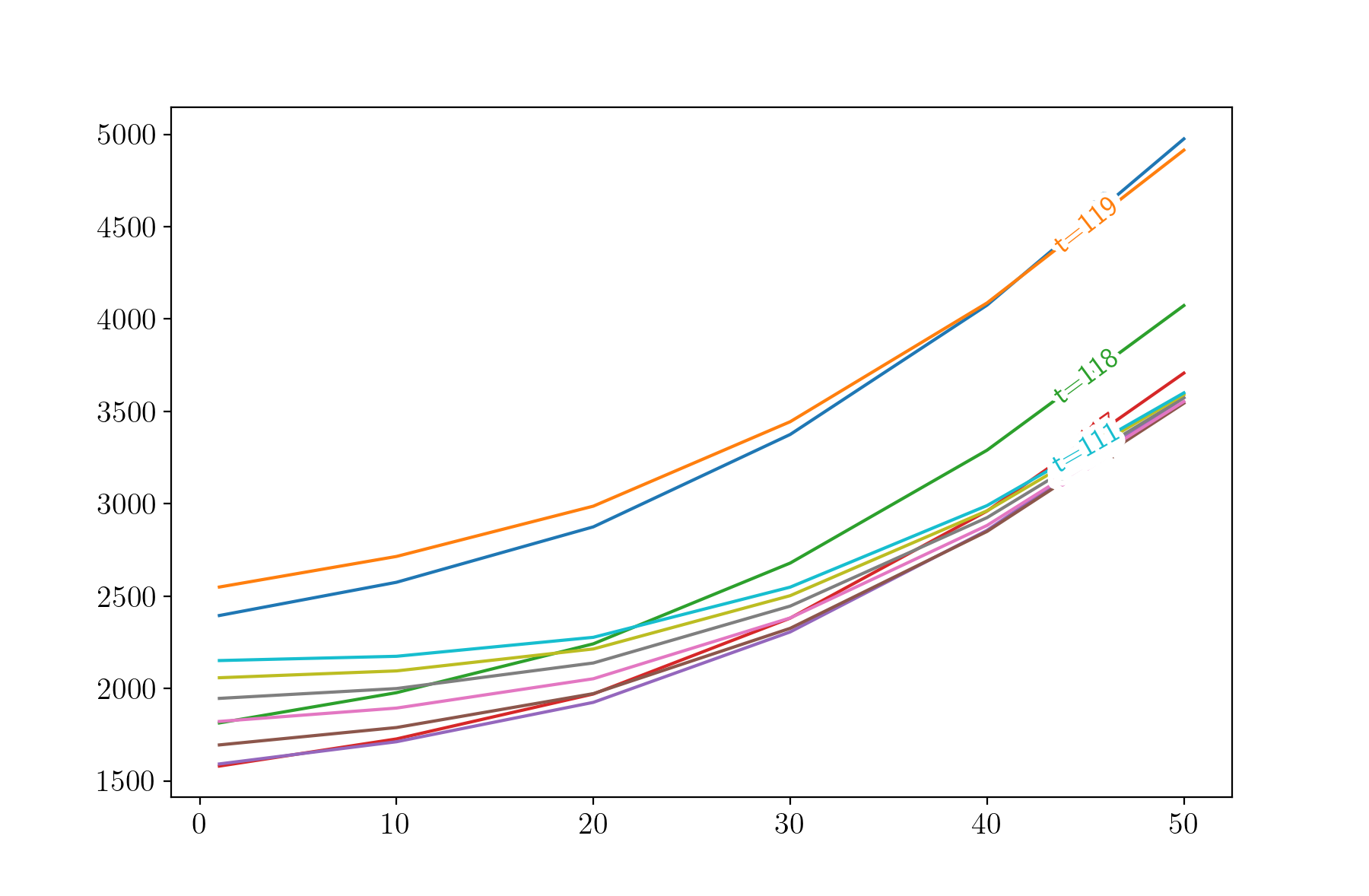}
				\caption{Function of $\z^3$, other variables moderate}
				\label{z3pbfit}
			\end{subfigure}			
			\caption{Value function from linear interpolation approach (left) and regression approach\\ (right)  as function of $\z^1$, $\m^1$ and $\z^3$ }
			\label{VFvsZY1fit}
		\end{figure}

		
		\begin{figure}
			\centering
			\hspace{-1.2cm}\textbf{Interpolation} \hspace{5cm}  \textbf{Regression} \\
			\begin{subfigure}{.5\textwidth}
				\centering
				\includegraphics[width=1.1\linewidth]{./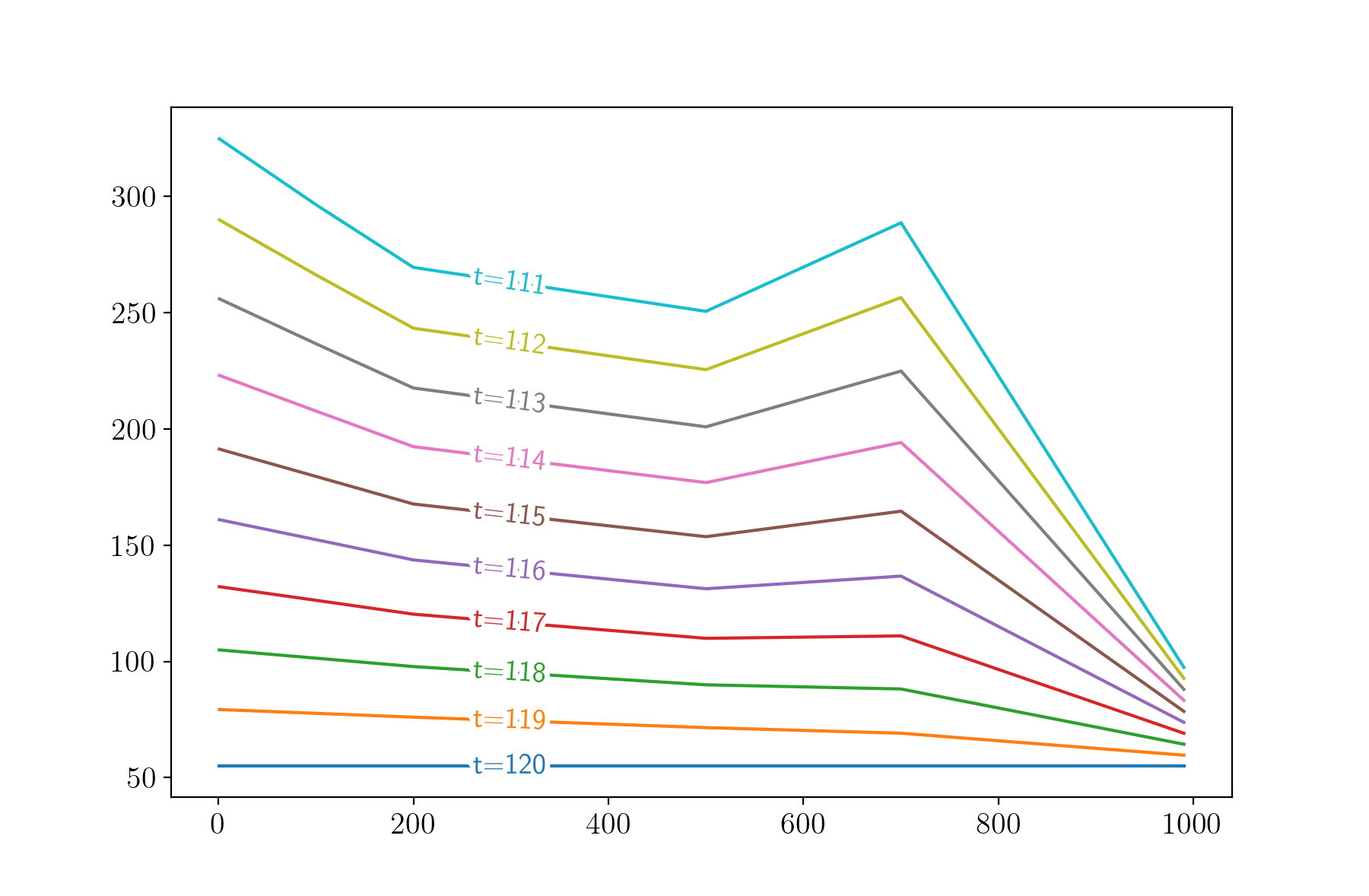}
				\caption{Function of $\z^2$,  other variables small }
				\label{z2pa}
			\end{subfigure}\begin{subfigure}{.5\textwidth}
				\centering
				\includegraphics[width=1.1\linewidth]{./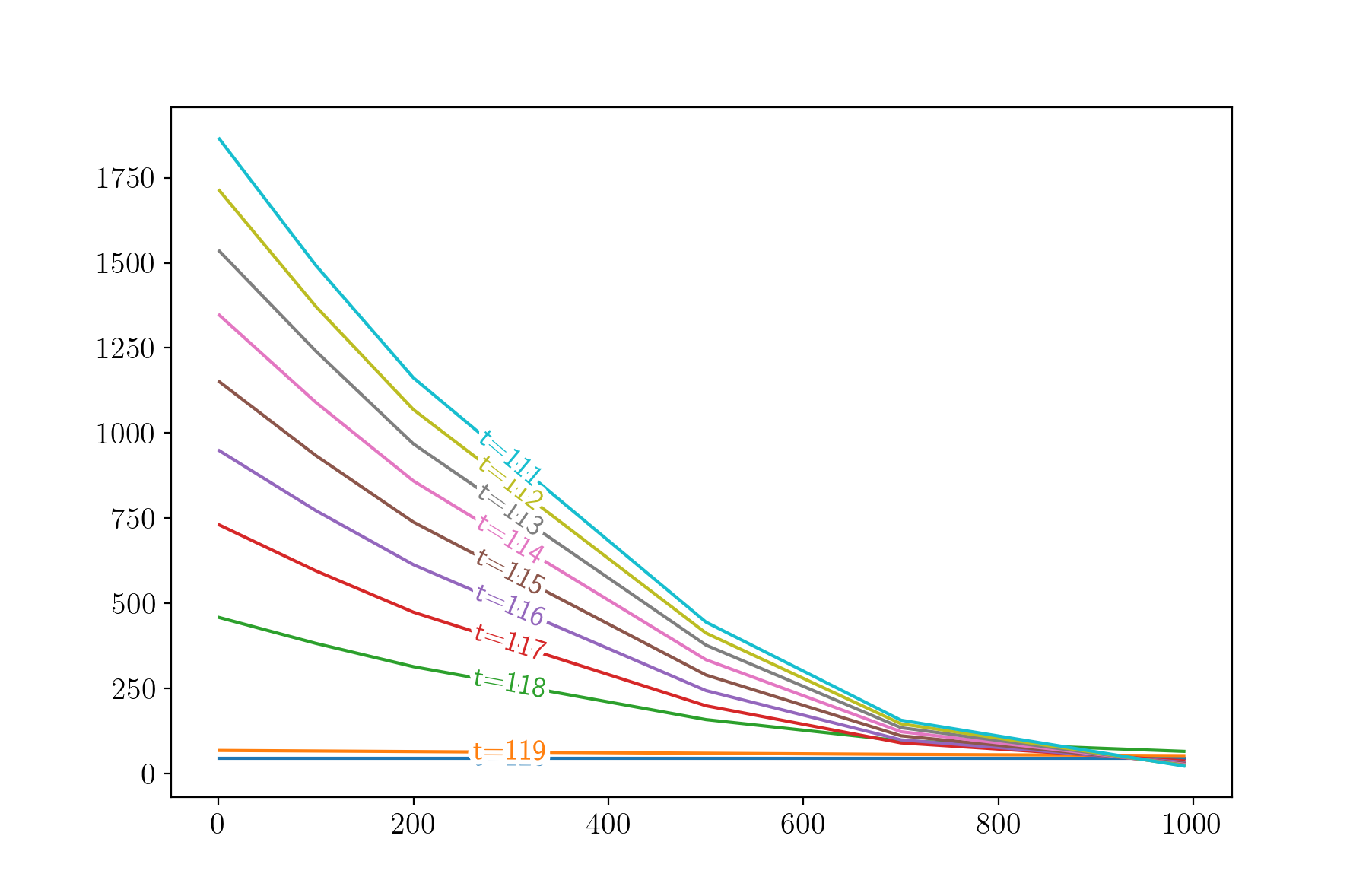}
				\caption{Function of $\z^2$,  other variables small }
				\label{z2pafit}
			\end{subfigure}\\			
			\begin{subfigure}{.5\textwidth}
				\centering
				\includegraphics[width=1.1\linewidth]{./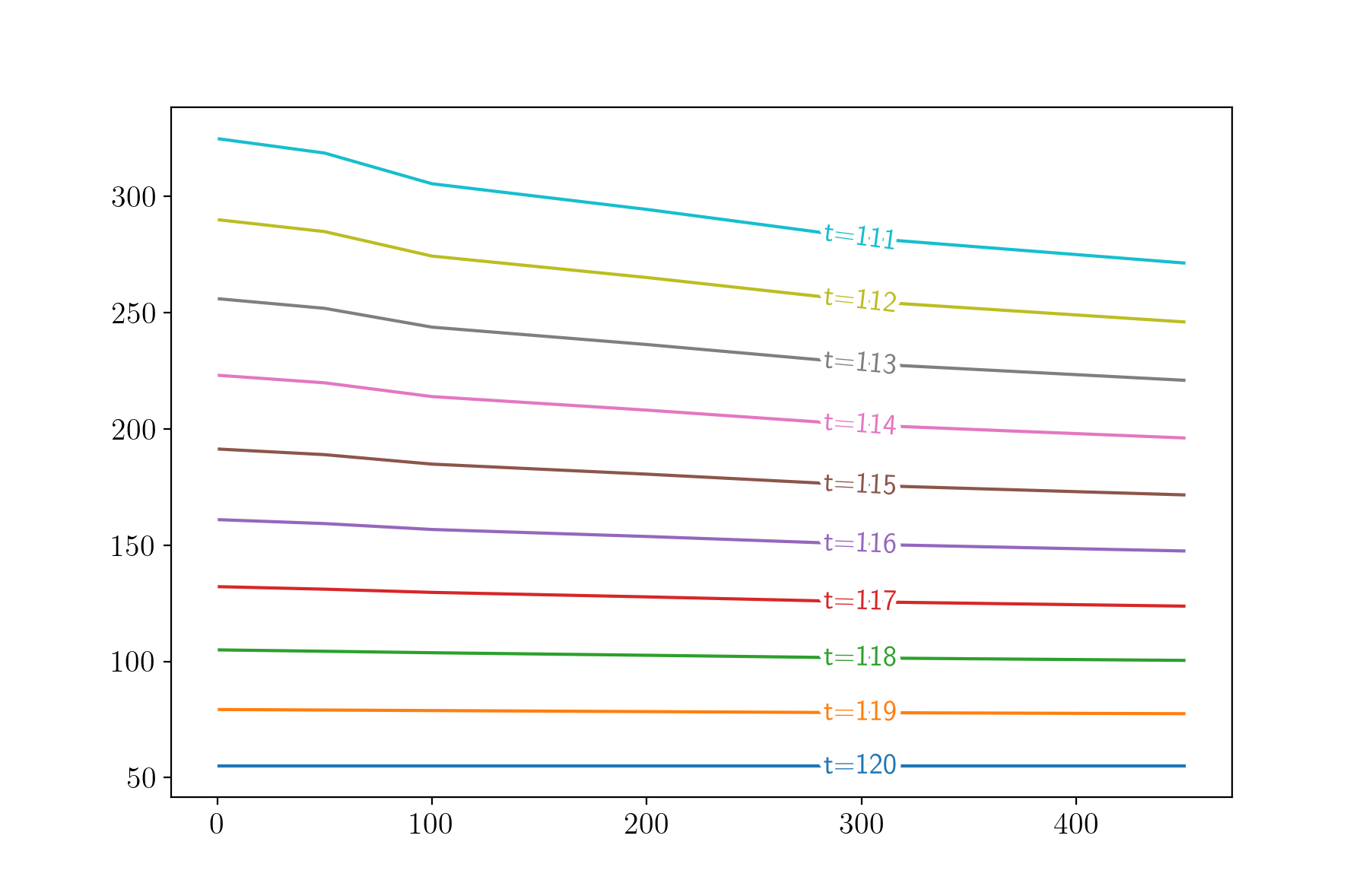}
				\caption{ Function of $\m ^2$, other variables small}
				\label{y2pc}
			\end{subfigure}\begin{subfigure}{.5\textwidth}
				\centering
				\includegraphics[width=1.1\linewidth]{./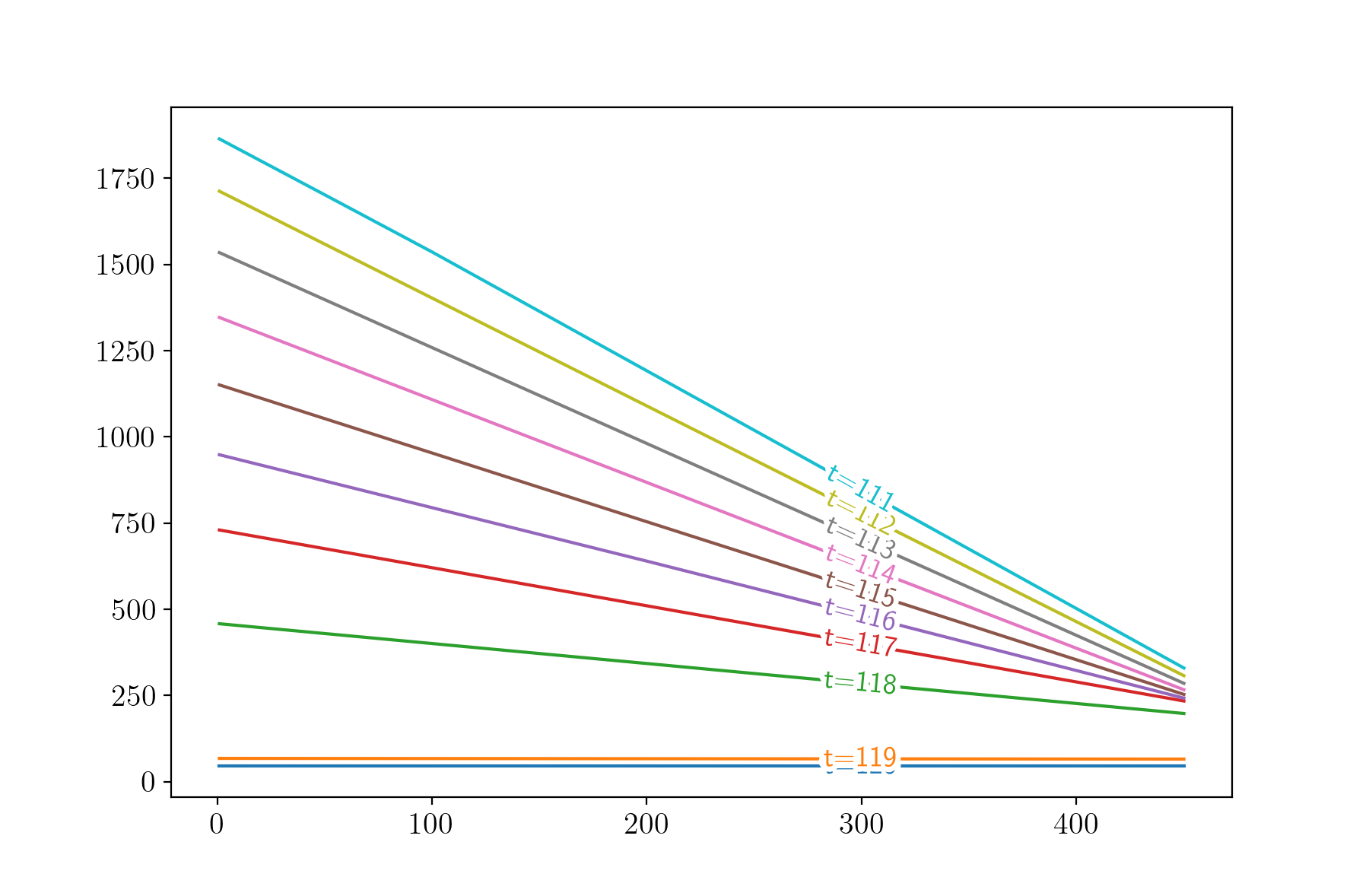}
				\caption{ Function of $\m ^2$, other variables small}
				\label{y2pcfit}
			\end{subfigure}\\			
			\begin{subfigure}{.5\textwidth}
				\centering
				\includegraphics[width=1.1\linewidth]{./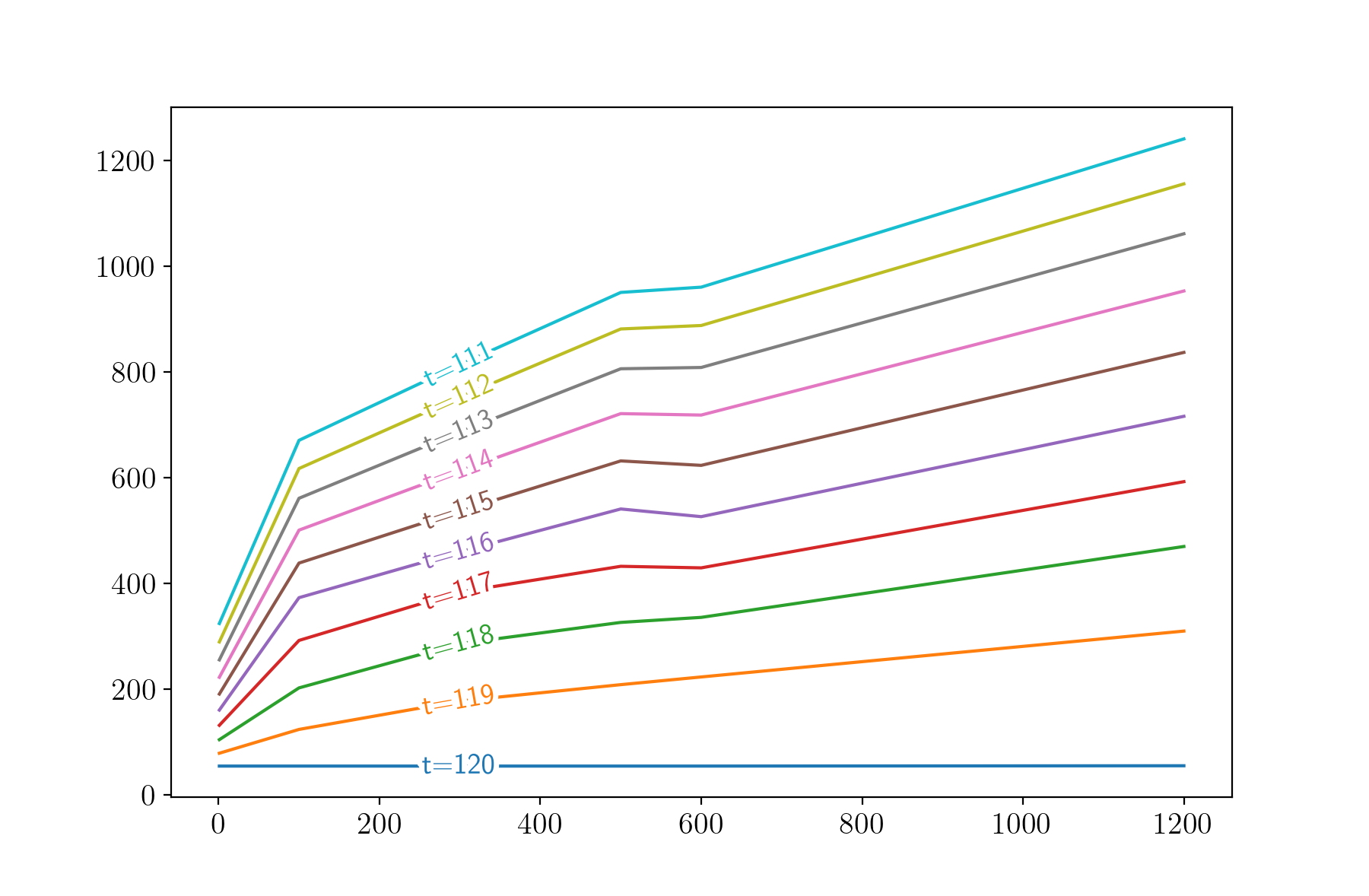}
				\caption{ Function of $\q ^1$, other variables small}
				\label{q1pa}
			\end{subfigure}\begin{subfigure}{.5\textwidth}
				\centering
				\includegraphics[width=1.1\linewidth]{./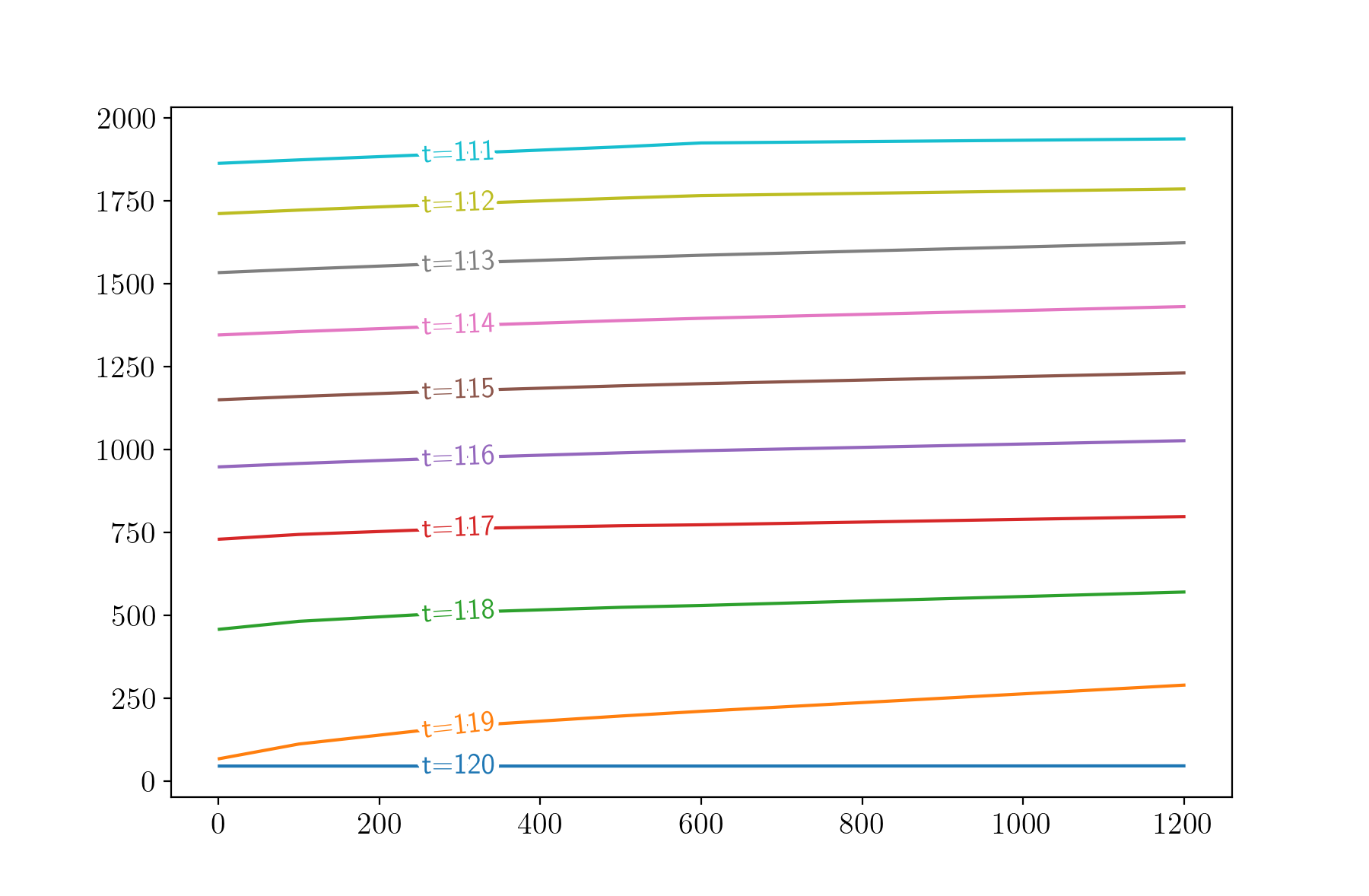}
				\caption{ Function of $\q ^1$, other variables small}
				\label{q1pafit}
			\end{subfigure}%
			\caption{Value function from linear interpolation approach (left) and regression approach (right)   as function of $\z^2$,$\m^2$ and $\q^1$ }
			\label{VFvsZY2}
		\end{figure}

		\begin{figure}
			\centering
			\hspace{-1.2cm}\textbf{Interpolation} \hspace{5cm}  \textbf{Regression} \\
			\begin{subfigure}{.5\textwidth}
				\centering
				\includegraphics[width=1.1\linewidth]{./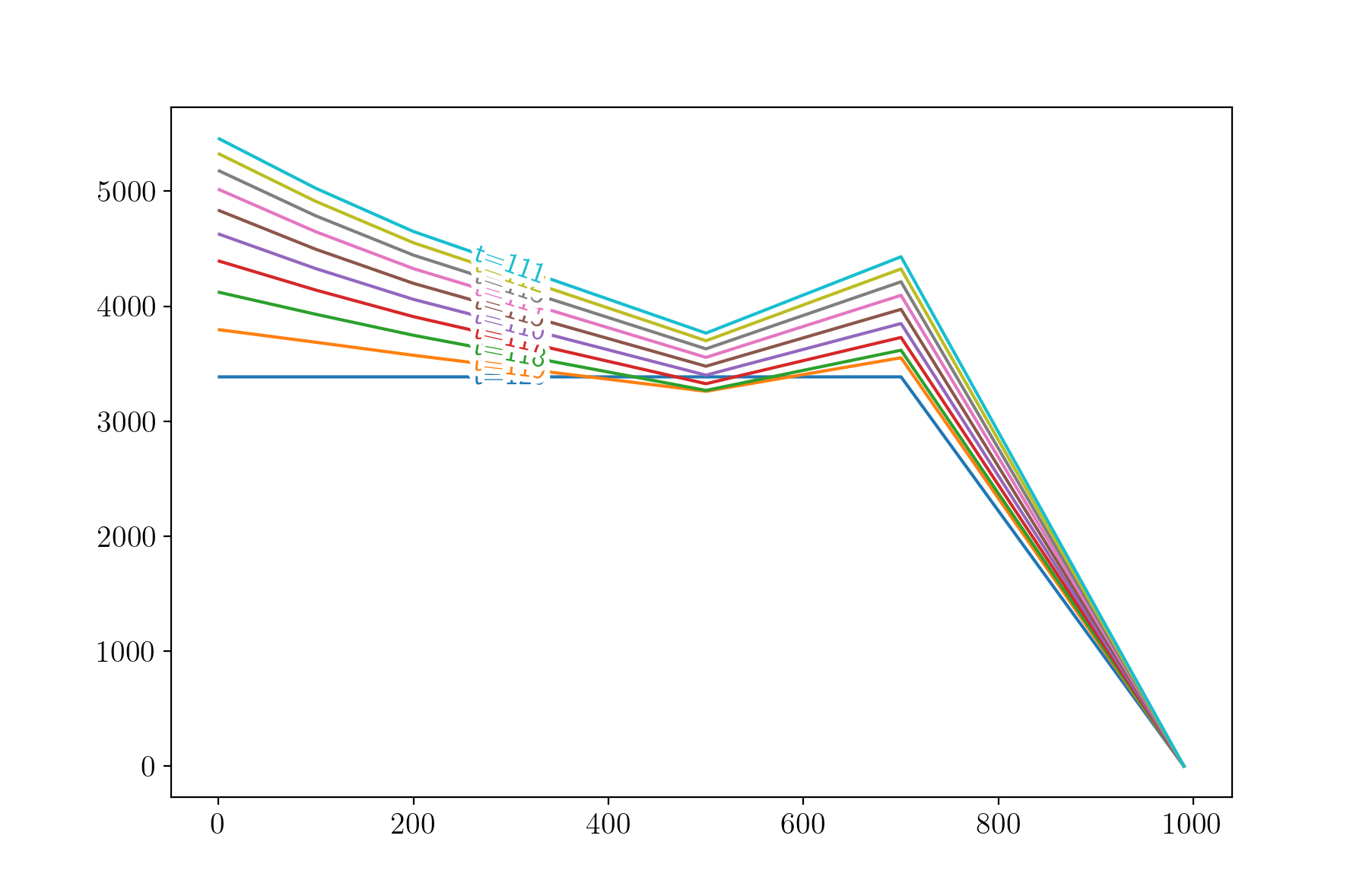}
				\caption{Function of $\z^2$, other variables moderate}
				\label{z2pb}
			\end{subfigure}\begin{subfigure}{.5\textwidth}
				\centering
				\includegraphics[width=1.1\linewidth]{./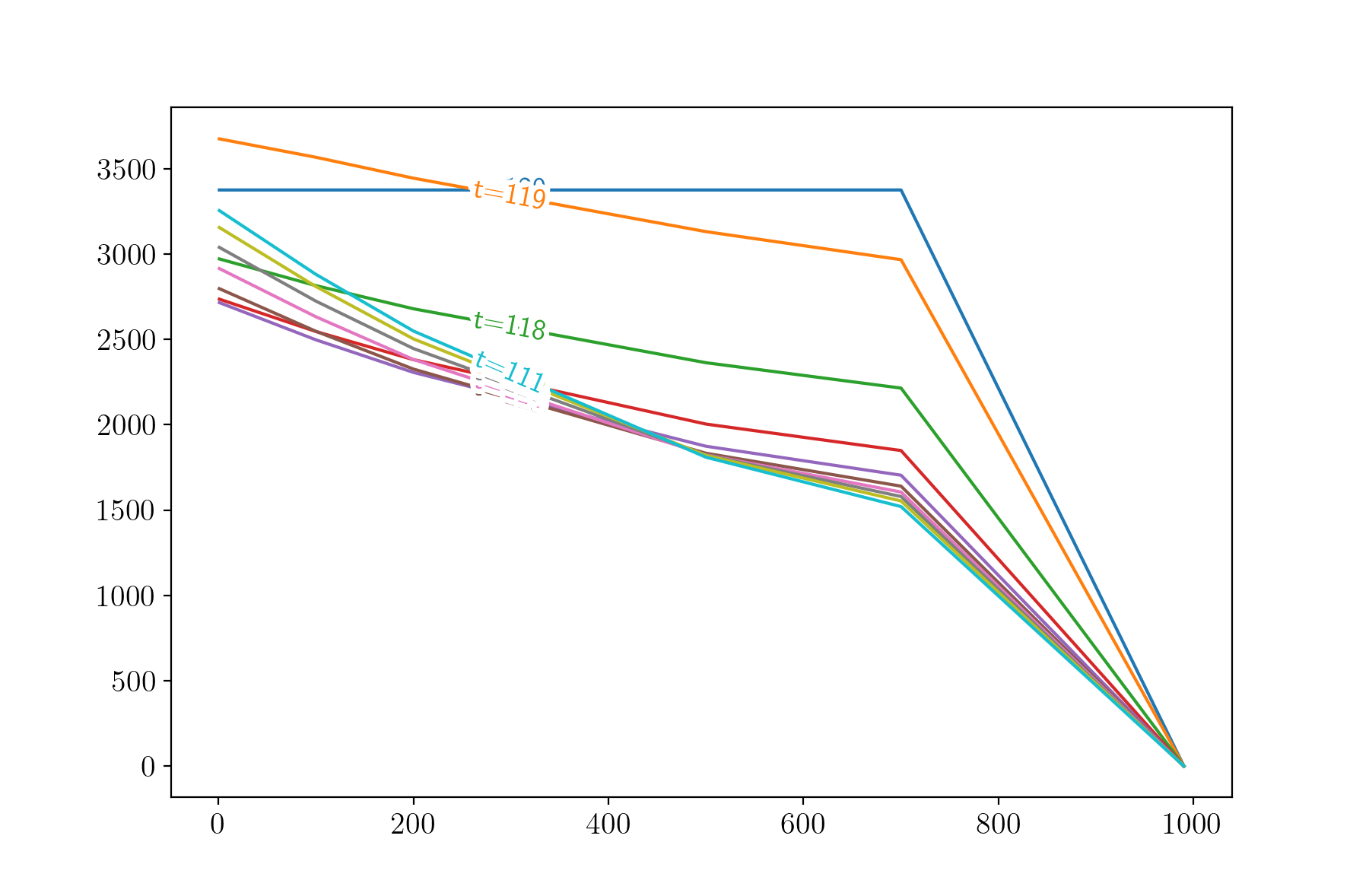}
				\caption{Function of $\z^2$, other variables moderate}
				\label{z2pbfit}
			\end{subfigure}	\\	
			\begin{subfigure}{.5\textwidth}
				\centering
				\includegraphics[width=1.1\linewidth]{./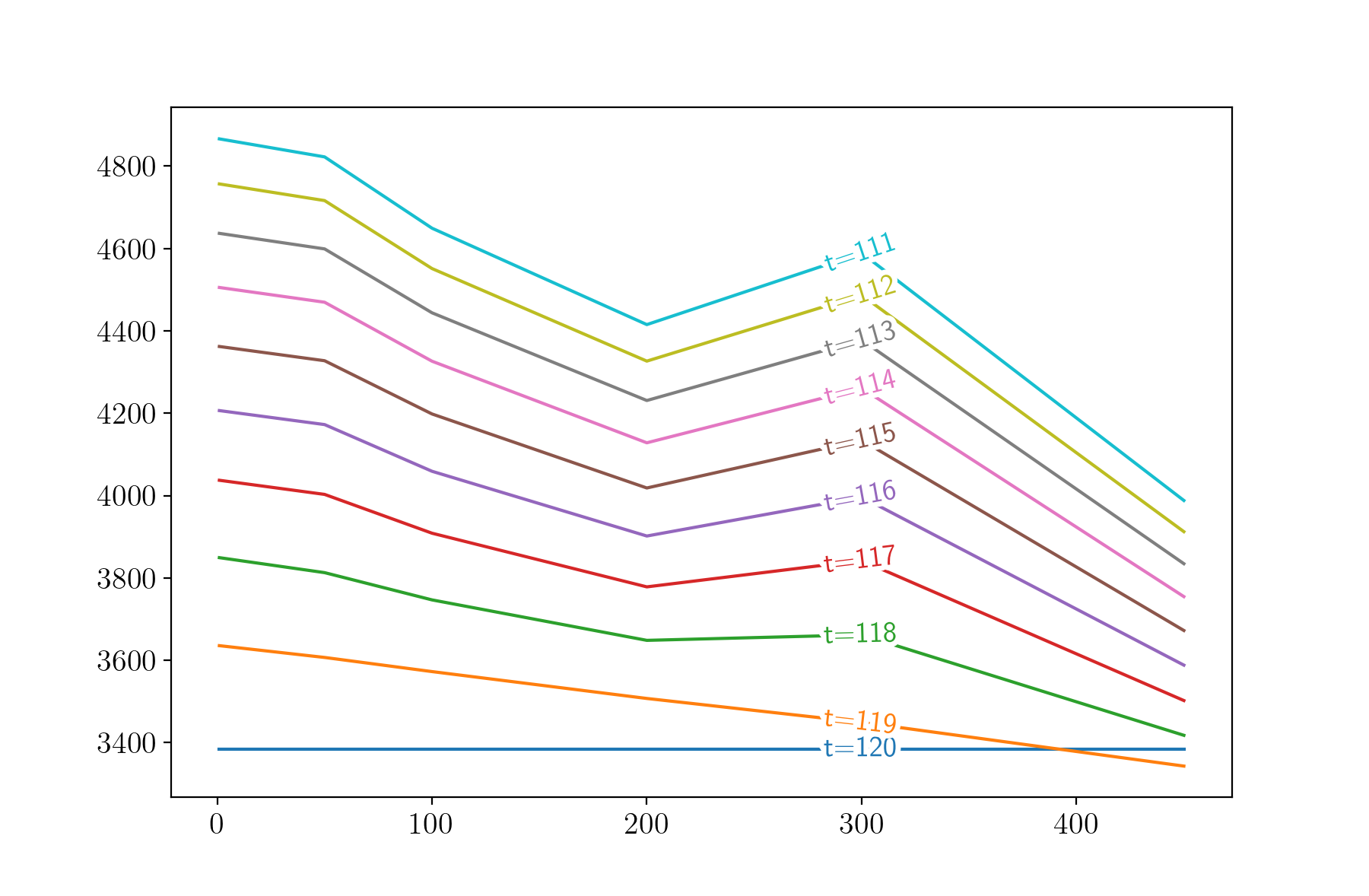}
				\caption{Function of $\m ^2$, other variables moderate}
				\label{y2pd}
			\end{subfigure}\begin{subfigure}{.5\textwidth}
				\centering
				\includegraphics[width=1.1\linewidth]{./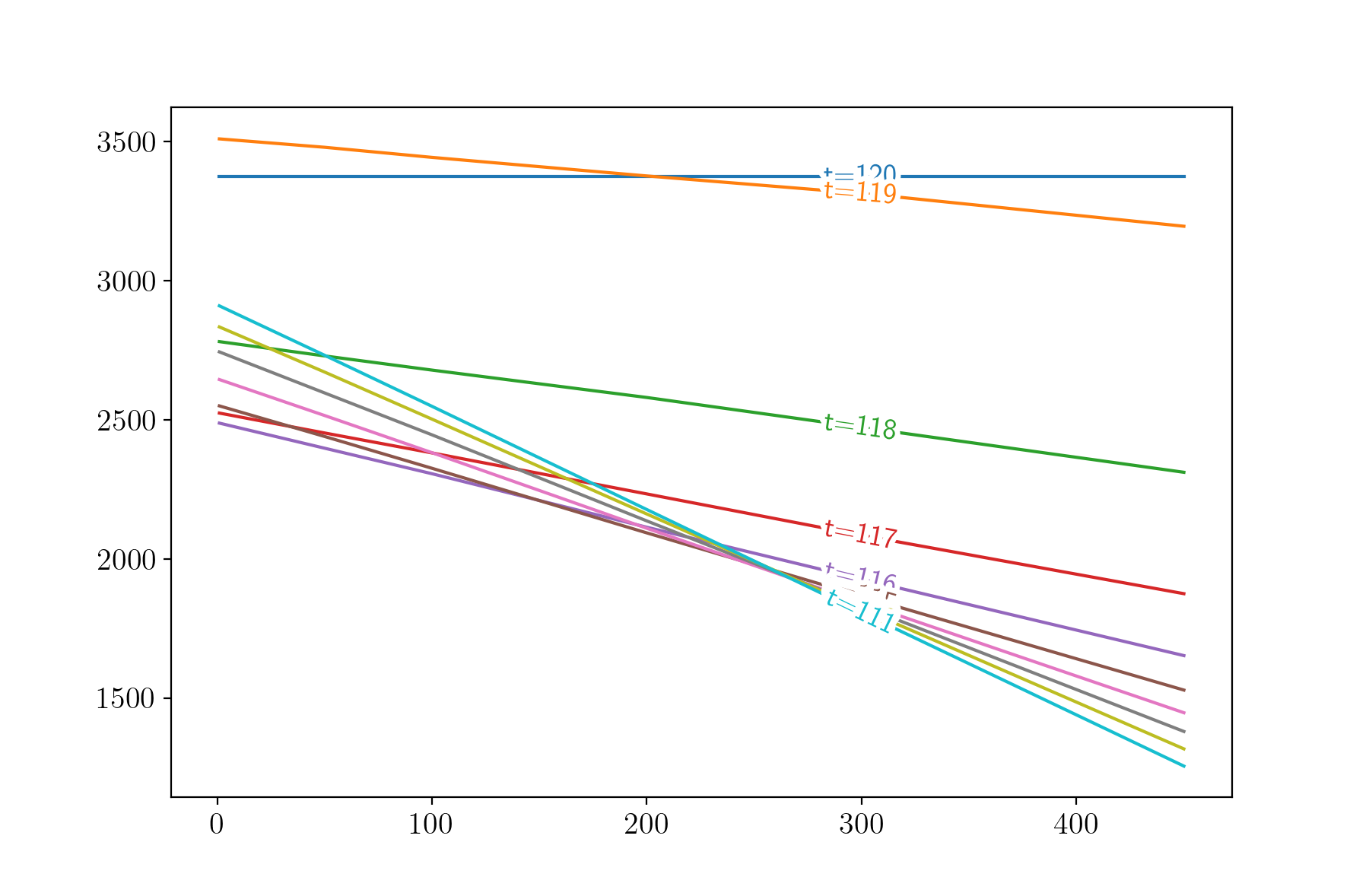}
				\caption{Function of $\m ^2$, other variables moderate}
				\label{y2pdfit}
			\end{subfigure}\\
			\begin{subfigure}{.5\textwidth}
				\centering
				\includegraphics[width=1.1\linewidth]{./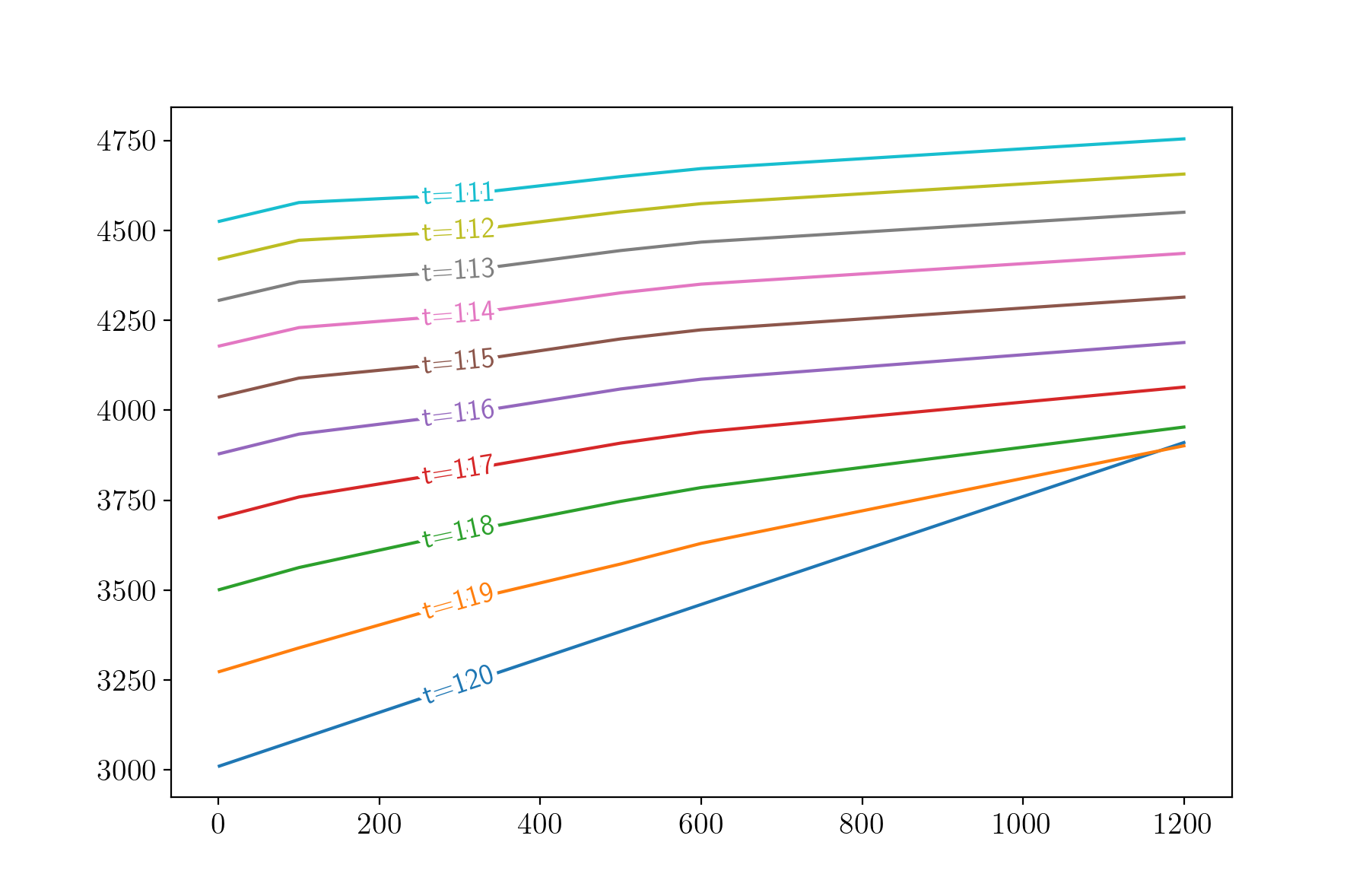}
				\caption{Function of $\q^1$, other variables moderate}
				\label{q1pb}
			\end{subfigure}\begin{subfigure}{.5\textwidth}
				\centering
				\includegraphics[width=1.1\linewidth]{./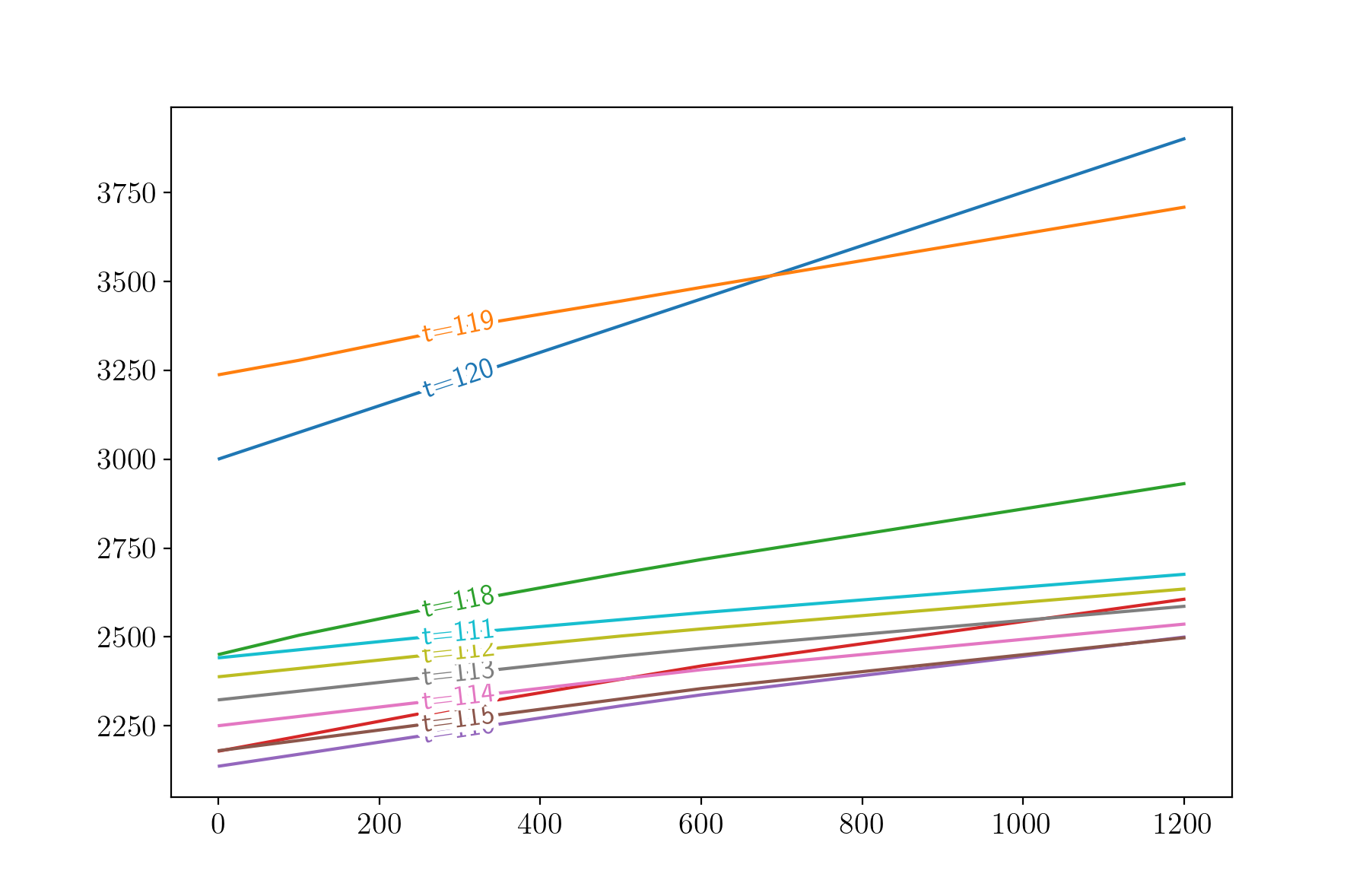}
				\caption{Function of $\q^1$, other variables moderate}
				\label{q1pbfit}
			\end{subfigure}			
			\caption{Value function from linear interpolation approach (left) and regression approach (right) as function of $\z^2$,$\m^2$ and $\q^1$ }
			\label{VFvsZY2fit}
		\end{figure}

		\subsection{Value function obtained with regression method w.r.t. each component of the state}
		
		The behavior of the value function with respect to each compartment can also be visualized using the second method, regression. Figure \ref{VFvsZY1}( Panels \subref{z1pafit},\subref{y1pafit},\subref{z3pafit}), Figure \ref{VFvsZY1fit} ( Panels \subref{z1pbfit},\subref{y1pbfit},\subref{z3pbfit}),
		Figure \ref{VFvsZY2}( Panels \subref{z2pafit},\subref{y2pcfit},\subref{q1pafit}),  and  Figure\ref{VFvsZY2fit} ( Panels \subref{z2pbfit},\subref{y2pdfit},\subref{q1pbfit})   illustrate how the value functions vary w.r.t.  each compartment size, while the other compartments are fixed at small and moderate values, similar to the first method results.		
		We observe that the overall shape of the value function for each compartment aligns with the basis of the ansatz functions. However, some structural changes in the value function over time can be noted. For instance, in Figures \ref{VFvsZY2} (Panels \subref{y2pcfit} and \subref{q1pafit}), the value functions increase as the time is running backward and when the other components are kept small. Conversely, they generally decrease when the other components are kept moderate \ref{VFvsZY2fit} (Panels \subref{y2pdfit} and \subref{q1pbfit}). This behavior differs from the results obtained with linear interpolation.

		\section{List of Abbreviations}
		
		\begin{longtable}{p{0.3\textwidth}p{0.68\textwidth}}
			SIR, SIRS&    Susceptible Infected Recovered with/without  lifelong immunity) \\
			$SI^{\pm}R^{\pm}H$ &  Suceptible Infected detected/non-detected Recovered detected/non-detected Hospitalized  \\
			Covid-19 &  Coronavirus disease 2019 \\
			ODE,SDE  &   Ordinary/Stochastic Differential Equation \\
			SOCP &  Stochastic optimal control problem \\
			MDP&  Markov decision process \\
			CTMC &   Continuous time Markov chain \\
			EKF &  Extended Kalman filter \\
			
			NPI& Non-Pharmaceutical Intervention	\\
			ICU& Intensive Care Unit\\
			HJB& Hamilton Jacobi Bellman		\\
			DPE& Dynamic programming equation\\
			CLVQ& Competitive learning vector quantization\\
			RBF& Radial basis functions\\			
		\end{longtable}
		
		\section{List of Notations}
		\begin{longtable}{p{0.3\textwidth}p{0.68\textwidth}}
			H&  hospitalized compartment \\
			I, $I^-$, $I^+$&  infected compartment, non-detected infected, detected infected compartment, \\
			$R$, $R^-$, $R^+$& Recovered, non-detected, detected recovered\\
			$S$ & Susceptible compartment\\
			$\rho$& correlation coefficient\\
			$I^+_{max}$& upper bound of number of detected infected \\
			$H_{max}$& upper bound of number of hospitalized  \\
			
			$\condmean^1_{max},~ \condmean^2_{max}$ & upper bound of  $\condmean^1,~ \condmean^2$\\ 
			$\condvar^1_{max},~ \condvar^2_{max}$ & upper bound of first and second diagonal entries  $\condvar^1,~ \condvar^2$\\ 
			$\condvar^{12}$ & non-diagonal entry of the covariance matrix\\
			$\mathcal{M}^1,~ \mathcal{M}^2$ & set of discretized values of $\condmean^1, ~ \condmean^2$\\
			$\mathcal{Q}^1,~ \mathcal{Q}^2$ & set of discretized values of $\condvar^1, ~ \condvar^2$\\$\mathcal{R}$ & set of discretized values of $\rho$\\
			$\mathcal{Z}^1,\mathcal{Z}^2,\mathcal{Z}^3$ & sets of discretized values of $Z^1,Z^2,Z^3$\\
			$\widetilde{M}^1$, $\widetilde{M}^2$ & discretization values of $\condmean^1$, $\condmean^2$\\
			$\widetilde{Q}^1$, $\widetilde{Q}^2$ & discretization values of $\condvar^1$, $\condvar^2$\\
			$\widetilde{Z}^1$, $\widetilde{Z}^2$, $\widetilde{Z}^3$& discretization values of $Z^1$, $Z^2$, $Z^3$\\
			$x_{\widetilde m}$ & generic grid point\\
			
			$u^L$& social distancing or lockdown rate\\
			$u^T $& Testing rate\\
			$u^V$& vaccination rate\\
			$u$ & generic control sequence\\ 
			$u^T_{min}$, $u^T_{max}$& minimum and maximum testing rate\\
			$u^V_{max}$ & maximum vaccination rate\\
			$\mathcal{A}$ & set of admissible control\\
			$\mathcal{U}$ & set of feasible control values\\
			
			$u^*$ & optimal control \\
			$\widetilde u^*$ & optimal decision rule \\ 
			$\mathcal{U}^D$ & set of discretized feasible control values\\
			$\beta$& infection rate\\
			$\alpha$& detection rate\\
			$\eta^-$, $\eta^+$& hospitalization rate for non-detected, detected infected\\
			$\gamma^-$, $\gamma^+$, $\gamma^H$& recovery rate for non-detected, detected, hospitalized infected\\
			$\condmean _n$& conditional mean  at time $n\times \Delta t$\\
			$\condvar _n$& conditional covariance  at time $n\times \Delta t$\\
			$n$& discrete time index\\
			$N$ & total population size\\
			$N_t$ & time index for terminal time $T=N_t \times \Delta t$\\
			$X^P$ & state process under partial information \\
			$\mathcal{F}^Z_n$  & observable $\sigma$-algebra at time $n$\\
			$\B_n$& Gaussian sequence\\
			$\Ec_n$ & innovation sequence \\
			$\overline{f}$ & drift coefficient of the hidden state in continuous time\\ $\overline{\sigma}$, 	$\overline{g}$ & first and second diffusion coefficient of the hidden state in continuous time\\
			$ \overline{h}_0$ $ \overline{h}_1$ & terms of   drift coefficient of the observation  in continuous time\\
			$ \overline{\ell}$ &  diffusion coefficient of the observation  in continuous time\\ 
			$f$ & drift coefficient of the hidden state in discrete time\\ 
			$\sigma$, $g$ & first and second   diffusion coefficient of the hidden state in discrete time\\
			$ h_0$ $ h_1$ & terms of   drift coefficient of the observation  in discrete time\\
			$ \ell$ &  diffusion coefficient of the observation  in discrete time\\ 
			$[A ]^+$& pseudoinverse of the matrix $A$\\
			$\mathcal{F}_0^I$ &  initial information $\sigma$-algebra\\
			$\mathbb{I}$&  identity matrix\\
			$m_0$,$q_0$ & initial values of conditional mean and conditional covariance \\
			$C_k$& generic running cost function\\
			$\overline{a}_k$, $a_k$, 	$b_k$  & generic running  cost coefficient\\
			$a_{I^-,N_t}$, $a_{I^+,N_t}$, $b_{I^-,N_t}$, $b_{I^+,N_t}$  & coefficient hidden, detected  infection cost at terminal time\\
			$\overline{x}$& threshold of the number of affected people\\
			
			$C_L$,  $C_T$, $C_V$, $C_H$& running lockdown, test, vaccination, hospitalization costs\\
			$C^{\pm}_I$ & running infection costs (detected/non-detected)\\
			$X^{\text{work}}$& labor force within the population\\
			$\overline{\text{x}}^{I^{\pm}}$ & threshold for the number of infected (detected/non-detected)\\
			$X^{\text{Test}}$ & number of people who ca be tested \\
			$\overline{ \text{x}}^{\text{Test}}$& maximum capacity for testing\\
			$\overline{ \text{x}}^{\text{Vacc}}$& maximum capacity for vaccination\\
			$\overline{ \text{x}}^{\text{H}}$& maximum hospital capacity \\
			$\Psi^F$, $\Phi^F$ & running  and terminal cost under full information\\
			
			$\mathcal{J}^F$  & expected aggregated cost under full information\\
			$\mathbb{F}$ & observable filtration\\
			$\mathcal{J}$  & expected aggregated cost under partial information\\
			$\Psi$,  	$\Phi$& running and terminal  cost under partial information\\
			$f_M,~g_M$ & drift and noise coefficients of the conditional mean \\
			$f_Z,~g_Z$ & drift and noise coefficients of the observation  \\
			$\mathcal{T}$ & transition operator for $X^P$ \\
			$\mathcal{T}^M$, 	$\mathcal{T}^Q$, 	$\mathcal{T}^Z$ & transition operator for conditional mean, conditional covariance, and observation \\
			$\Phi_{\mathcal{N}}$ & CDF of normal distribution\\		
			$\Gamma$ & set of quantization points\\
			$N_l$ & number of quantization points or quantization level\\
			$\xi_l$, 	$w_l$ & quantization points and weights\\
			$\varphi_j$ &  ansatz functions\\
			$\theta_j$ & ansatz functions regression coefficients\\
			$\mathcal{I}$ & set of grid points indexes\\
			$\nu$ & generic value of control\\
			$ W=(W^{1},W^{2})^{\top}$ &   $K$-dimensional standard Brownian motions\\ 
		\end{longtable}

		{\footnotesize

		} 
		
	\end{document}